\newcommand{\dx}{{\, \rm d}x}
\newcommand{\Div}{{\rm div}\,}
\newcommand{\aleq}{\stackrel{<}{\sim}}
\newtheorem{thm}{Theorem}
\newtheorem{lem}[thm]{Lemma}
\newtheorem{prop}[thm]{Proposition}
\newtheorem{cor}[thm]{Corollary}
\newtheorem{df}{Definition}
\newtheorem{rmk}{Remark}
\newcommand{\bfphi}{\boldsymbol{\varphi}}
\newcommand{\Ov}[1]{\overline{#1}}
\newcommand{\DC}{C^\infty_c}
\newcommand{\vr}{\varrho}
\newcommand{\vu}{\vc{u}}
\newcommand{\vc}[1]{{\bf #1}}
\newcommand{\vcg}[1]{{\pmb #1}}
\newcommand{\F}[1]{$\mathbb{#1}$}
\newcommand{\Grad}{\nabla}
\newcommand{\tn}[1]{\mbox {\F #1}}
\newcommand{\dt}{\, {\rm d} t }
\newcommand{\intO}[1]{\int_{\Omega} #1\dx}
\newcommand{\vv}{\vc{v}}
\newcommand{\ep}{\varepsilon}
\newcommand{\R}{\mathbb{R}}
\begin{document}

\title{Weak solutions for a bi-fluid model for a mixture of two compressible
non interacting fluids with general boundary data}
\author{Stanislav Kra\v cmar
\thanks{S.K.  has been supported by the Czech Science Foundation
(GA\v CR) project GA19-04243S within RVO:67985840.}   
\and
Young-Sam Kwon
\thanks{ { The work of Y-S.K. was
supported by the National Research Foundation of Korea (NRF2020R1F1A1A01049805)}}
\and
\v S\' arka Ne\v casov\' a \thanks{ \v S. N.  has been supported by the Czech Science Foundation
(GA\v CR) project GA19-04243S. The Institute of Mathematics, CAS is
supported by RVO:67985840. }
\and
Anton\'{i}n Novotn{\' y}
\thanks{The work of the fourth author was partially supported by the distinguished Edurad \v Cech visiting program at the
Institute of Mathematics of the Academy of Sciences of  the Czech Republic.}}
\maketitle

\medskip
\centerline{Department of Mathematics, Czech Technical University,
Faculty of Mechanical Engineering,}

\centerline{Resslova 307, 110 00 Praha, Czech Republic, kracmar@marian.fsik.cvut.cz }
\medskip

\centerline{Department of Mathematics, Dong-A University}
\centerline{Busan 49315, Republic of Korea, ykwon@dau.ac.kr}

\medskip
\centerline{Institute of Mathematics of the Academy of Sciences of the Czech Republic,}
\centerline{\v Zitn\'a 25, 115 67 Praha, Czech Republic, matus@math.cas.cz}

\medskip
\centerline{University of Toulon, IMATH, EA 2134,  BP 20139}
\centerline{839 57 La Garde, France, novotny@univ-tln.fr}
\begin{abstract}
We prove global existence of weak solutions for a version of one velocity  Baer-Nunziato system with dissipation describing a mixture of two  non interacting
viscous compressible fluids in a piecewise regular Lipschitz domain with general inflow/outflow boundary conditions. The geometrical setting is  general enough to comply
with most current domains important for applications as, for example, (curved) pipes of picewise regular and axis-dependent cross sections. 

{ As far as the existence proof is concerned, we adapt to the system the nowaday's classical Lions-Feireisl approach to the compressible Navier-Stokes equations which is combined with a generalization of the theory of renormalized solutions to the transport equations in the spirit of Vasseur-Wen-Yu. The results related to the families of transport equations presented in this paper extend/improve some of statements of the theory of renormalized solutions, and they are therefore of independent interest.}
\end{abstract}

\noindent{\bf MSC Classification:} 76N10, 35Q30

\smallskip

\noindent{\bf Keywords:} bi-fluid system; Baer-Nunziato system; compressible Navier--Stokes equations; transport equation; continuity equation; renormalized solutions; non zero inflow and outflow, weak solutions

\section{Introduction} \label{se1}
 One of the acceptable models for description of mixture of several
compressible fluids is the so called {\it two velocity  Baer-Nunziato model}.
The equations of the  Baer--Nunziato  model with dissipation   read
(cf. \cite{BaNu}, \cite{DA}, \cite[Section 1]{Guil}):
$$
\partial_t{\alpha}_{\pm}+\vc v_I\cdot\Grad{\alpha}_{\pm}=0,
$$
$$
\partial_t(\alpha_\pm\vr_\pm)+{\rm div}(\alpha_\pm\vr_\pm\vu_\pm)=0,
$$
$$
\partial_t(\alpha_\pm\vr_\pm\vu_\pm)+{\rm div}(\alpha_\pm\vr_\pm\vu_\pm\otimes\vu_\pm) +\Grad (\alpha_\pm P_\pm(\vr_\pm))-P_I\Grad(\alpha_\pm)
$$
$$
=
\alpha_\pm\mu_\pm(\Delta\vu_\pm)+\alpha_\pm(\mu_\pm+\lambda_\pm)\Grad{\rm div}\vu_\pm
$$
$$
0\le\alpha_\pm\le 1,\quad\alpha_+ + \alpha_-=1.
$$
In the above $(\alpha_\pm,\alpha_\pm\vr_\pm\ge 0,\vu_\pm\in \R^3)$ -concentrations, densities, velocities of the $\pm$ species - are unknown functions of time-space { $(t,x)=Q_T:=I\times \Omega$, $t\in I=(0,T)$, $T>0$,} and $x\in\Omega\subset \R^3$, $P_\pm$ are two (different) given functions defined on $[0,\infty)$ and
$P_I$, ${\vc v}_I$ are conveniently chosen  quantities - they represent pressure and velocity at the interface. In the multifluid modeling, there are many possibilities how the quantities ${\vc v}_I$, $P_I$ could be chosen, and there is no consensus about this choice.

Our goal in this paper is to prove the existence of weak solutions for the Baer-Nunziato system with dissipation on an arbitrary large time interval $(0,T)$ in a Lipschitz bounded domain $\Omega$,
under the following
simplifying assumptions:
\begin{equation}\label{1}
\mu_\pm:=\mu,\;\lambda_\pm:=\lambda,\; \vc v_I=\vu_\pm:=\vu
\end{equation}
\begin{equation}\label{2}
\alpha P_\pm(s)= {\cal P}_\pm(f_\pm(\alpha)s)\;\mbox{for all $\alpha\in (0,1)$, $s\in [0,\infty)$}
\end{equation}
with some functions ${\cal P}_\pm$ defined on $[0,\infty)$ and functions $f_\pm$ defined on $(0,1)$.

With this simplifications, the two velocity Baer-Nunziato system reduces to the following system (which we will call the {\it one velocity  Baer-Nunziato type system}):
\begin{equation}
\label{e1}
\partial_t\alpha+(\vu\cdot \nabla)\alpha=0,\quad
0\le\alpha\le 1,
\end{equation}
\begin{equation}
\label{e2}
\partial_t\vr+\mbox{div}(\vr \vu)=0,
\end{equation}
\begin{equation}
\label{e3}
\partial_tz+\mbox{div}(z\vu)=0,
\end{equation}
\begin{equation}
\label{e4}
\partial_t((\rho+z)\vu)+\mbox{div}((\rho+z)\vu\otimes \vu)
+\nabla P( f(\alpha)\vr, g(\alpha)z)={\rm div} \tn S(\Grad\vu)
\end{equation}
Here $P:[0,\infty)^{2}\mapsto [0,\infty)$ as well as $f,g:(0,1)\mapsto [0,\infty)$  are given functions, and
$$
{\tn S(\tn Z)}=\mu(\tn Z+{\tn Z}^T)+\lambda{\rm Tr}(\tn Z)\tn I.
$$
($\tn I$ is the identity tensor, {\rm Tr} denotes the trace) is the viscous stress tensor.
 The constant viscosity coefficients
satisfy standard physical assumptions, $\mu>0$, $\lambda+\frac 23\mu\ge 0$.
The system is endowed with initial conditions
\begin{equation}
\label{e5}
 \alpha|_{t=0}=\alpha_0,\; \vr|_{t=0}=\vr_0,\; z|_{t=0}=z_0,\; (\vr+z)\vu|_{t=0}=(\vr_0+z_0)\vu_0,
\end{equation}
{
We consider the  general inflow-outflow boundary conditions,
\begin{equation}
\label{e6}
\vc{u} |_{\partial \Omega} = \vu_B, \ \vr|_{\Gamma^{\rm in}} = \vr_B,\ z|_{\Gamma^{\rm in}} = z_B,\ \alpha|_{\Gamma^{\rm in}} = \alpha_B,
\end{equation}
where
\begin{equation}
\label{e6a}
\Gamma^{\rm in} = \left\{ x \in \partial \Omega \ \Big| \ {\vc{u}_B} \cdot \vc{n} < 0 \right\}.
\end{equation}
}
For further use, we also define
\begin{equation}\label{e6b}
\Gamma^{\rm out} = \left\{ x \in \partial \Omega \ \Big| \ {\vc{u}_B} \cdot \vc{n} > 0 \right\},\; 
\Gamma^{0} = {\rm int}_2\left\{ x \in \partial \Omega \ \Big| \ {\vc{u}_B} \cdot \vc{n} = 0\;\mbox{or}\;\vc n(x)\;\mbox{does not exist}\,\right\}.
\end{equation}
{ Here and in the sequel, ${\rm int}_2A$
denotes the interior of $A\subset\partial\Omega$ with respect to the trace topology of $\R^3$ on $\partial\Omega$.}

Assumption (\ref{2}) is certainly true in the classical situation of two isentropic gases when
\begin{equation}\label{gammaP}
{P}_\pm(s)=a_\pm s^{\gamma^{\pm}}, \;\gamma_\pm>0;
\end{equation}
indeed, in this case
\begin{equation}\label{Pg}
P(R,Z)=a_+ R^{\gamma^+}+a_-Z^{\gamma^-},\;f(s):=f_+(s)={s}^{\frac{1}{\gamma^+}-1},\;g(s):=f_-(s)={(1-s)}^{\frac{1}{\gamma^-}-1}.
\end{equation}
We shall however be able to treat in system (\ref{e1}--\ref{e5}) more general functions $P,f,g$ than those being given by
(\ref{Pg}).

System (\ref{e1}--\ref{e4}) belongs to the family of multi-fluid models with differential closure, cf. Ishii, Hibiki \cite{ISHI},
Drew, Passman \cite{DRPAS}. It is not without interest that it can be viewed as a barotropic counterpart of the so called five-equation
bi-fluid model derived in Allaire, Clerc, Kokh  \cite{ACK1}, \cite{ACK2} or by Guillard, Murrone \cite{GuMu} by different considerations.

{ The mathematical literature dealing with these types of models
is in a short supply and none of it deals with the general boundary data. We quote a few papers dealing with no-slip or periodic or
slip boundary conditions for related problems: \cite{BreschMF}, \cite{BRZA}, \cite{EV1}, \cite{EV2}, \cite{3MNPZ},
\cite{NoSCM}, \cite{AN-MP}, \cite{QWE}, \cite{VWY}.}
In particular, existence of weak solutions for the problem
(\ref{e1}--\ref{e5})
under quite general assumptions on constitutive functions $P$ and $f,g$ is known in the case of no-slip boundary conditions ($\vu|_{\partial\Omega}=0$), see \cite{NoSCM}. {\it The main goal and achievement of this work is to treat the general non-zero inflow-outflow problem which is more adequate than the no-slip or slip cases for most physical and engineering applications.}

Similarly as in \cite{NoSCM},  the proof will be based on the reformulation of the original problem via the change of variables
\begin{equation}\label{change}
R:=f(\alpha)\vr,\; Z=g(\alpha)z
\end{equation}
as follows:
\begin{equation}\label{eq1.1}
\begin{aligned}
\partial_t \vr +\Div(\vr \vu)  & = 0,  \\
\partial_t z + \Div (z\vu) & = 0,  \\
\partial_t R +\Div(R \vu)  & = 0,  \\
\partial_t Z  + \Div(Z \vu) &= 0, \\
\partial_t \big((\vr+z)\vu\big) + \Div((\vr+z)\vu\otimes \vu) + \nabla P(R,Z) &=
{\rm div}\tn S(\Grad\vu)
\end{aligned}
\end{equation}
with boundary and initial conditions
{

\begin{equation}
\label{eq1.2}
\begin{array}{l}
\vc{u} |_{\partial \Omega} = \vu_B, \ \vr|_{\Gamma^{\rm in}} =  \vr_B,\
z|_{\Gamma^{\rm in}}=z_B,\\
\ R|_{\Gamma^{\rm in}} = R_B:=f(\alpha_B)\vr_B,\ Z|_{\Gamma^{\rm in}} = Z_B:=g(\alpha_B)z_B,
\end{array}
\end{equation}

}

\begin{equation}\label{eq1.3}
\vr(0,x)=\vr_0(x),\;
z(0,x)=z_0(x),
\end{equation}
$$
R(0,x) =R_0(x):= f(\alpha_0)\vr_0(x), \;
Z(0,x)=Z_0(x):=g(\alpha_0)z_0(x),
$$
$$
(\vr+z)\vu(0,x)=(\vr_0+z_0)\vu_0
$$
for unknown quintet $(\vr,z,R,Z,\vu)$ of functions defined on the space-time cylinder $Q_T=I\times\Omega$. In this paper, we will call it an {\it academic bi-fluid system}.

 It is to be noticed that, in the family of equations (\ref{eq1.1}), the transport equation for $\alpha$ is tacitly hidden in the first four
continuity equations: We anticipate here the fact that (formally), $\alpha=f^{-1}(R/\vr)$  and $\tilde\alpha=g^{-1}(Z/z)$ verify transport equation with the same initial condition $\alpha_0$ and the same boundary condition $\alpha_B$. To pass from the academic system (\ref{eq1.1}
--\ref{eq1.3}) to the original system (\ref{e1}--\ref{e6a}),  we shall need to identify $\alpha$ and $\tilde\alpha$. This will be done through the observation that the {\em pure transport equation} 
enjoys the {\em almost uniqueness property} (without condition ${\rm div}\vu\in L^1(0,T;L^\infty(\Omega))$ and without a slightly weaker
condition of Bianchini, Bonnicatto \cite{BIBO}), see Corollary \ref{teu}. These  notions and { further  properties
related to families of transport equations} which in a sense generalize
and complete the results of the seminal paper by DiPerna-Lions \cite {DL} will  be specified and put on rigorous grounds in Section \ref{TE} which is of independent interest.

The statement about the existence of weak solutions for the academic system (\ref{eq1.1}--\ref{eq1.3}) is formulated { in Section \ref{se2}} in Theorem \ref{theorem1}
and similar statement about the existence of weak solutions to the one velocity Baer-Nunziato type system (\ref{e1}--\ref{e5}) is available in Theorem \ref{theorem2}.

This article is inspired by two main sources: 1) The paper \cite{NoSCM}, where the author investigates weak solutions for the bi-fluid systems (both "academic"
and "realistic" under the no-slip boundary conditions). 2) Papers \cite{ChJNo}, \cite{ChNoYa}, \cite{KwNo} and monograph \cite{PloSo}, where the authors construct weak solutions for
the "mono-fluid" compressible Navier-Stokes equations with general inflow-outflow boundary data. We wish to concentrate on the effects of the non-homogenous boundary conditions. Therefore, in contrast with \cite{NoSCM}, we renounce at accommodation of the most general pressure law, in order to avoid the
unessential technical difficulties. Still, the pressure law considered in this paper covers most of classical situations including the mixture of two isentropic gases, cf. example (\ref{Pg}).

The main steps in  our approach are the following:

\begin{enumerate}
 \item { In Section \ref{TE} we develop the theory of renormalized
 solutions to families of transport equations with non-homogenous boundary data, which is one of the building blocks of the proofs. {\it These results are new and of independent interest.} It includes in this context:
\begin{enumerate}
\item Passage (via renormalization) from the (two) continuity equations to a pure transport equation.
\item Passage (via renormalization) from the (two) transport equations and a continuity equation to a continuity equation
\end{enumerate}
and two consequences of these results
\begin{enumerate}
\item {\em Almost compactness of the ratio of two solutions of continuity equations.}
\item {\em Almost uniqueness of the solutions to the pure transport equation.}
\end{enumerate}
As mentioned already above, these are delicate issues which are somehow connected (and as far as the {\it uniqueness} is concerned, somehow generalize) the seminal works
of DiPerna-Lions \cite{DL}, Ambrosio, Crippa, \cite{Ambr},   Bianchini, Bonicatto \cite{BIBO}, employing only the Eulerian approach. {{\it Almost compactness} generalizes to the uniform in time convergence and to the non homogenous boundary data the original results of Vasseur, Wen, Yu from their seminal paper
\cite{VWY}.}
We refer also to papers by Boyer \cite{Boyer} and Crippa et al \cite{CDL}   for results related to transport equations with non homogenous boundary data.
}

\item { We approximate the "academic" system similarly as in \cite{ChJNo}:  The momentum equation with added artificial pressure term (small parameter $\delta>0$)}
is approximated by the Galerkin approximation (of dimension $n$) while each continuity
equation is approximated by a specific parabolic boundary value problem (small parameter $\ep$) with the Robin type boundary conditions (for a while,
we shall call the solutions of these parabolic problems  "densities"). Having in view
applications in numerical analysis, in contrast with \cite{ChJNo}--where one solves the parabolic problems by using the maximal parabolic 
regularity and thus needs at least $C^2$ boundary-- we solve the parabolic problem on Lipschitz domains following Crippa et al. \cite[Lemma 3.2]{Crippa}.

It is known since \cite{3MNPZ} that the property of domination of one density by another one perpetuates for all times if it is in force initially. We shall
prove by using the maximum principle that the parabolic equations in consideration enjoy this property. Following \cite{NoSCM} and \cite{KwNo}, we easily derive for the approximate system the energy inequality and uniform bounds. 

The approximation on Lipschitz domains has an evident practical advantage: In \cite{ChNoYa}, the passage from $C^2$-domain required in \cite{ChJNo} to a Lipschitz piecewise $C^2$ domains
(which can be considered as an reasonable geometry for the inflow-outflow problems) is effectuated by a laborious approximation of domains, while
in the present approach, this step is for free (and requires slightly less of the domain than it is required in \cite[Theorem 2.4]{ChNoYa}) 
-- only at cost of more work at the level of the Galerkin approximation. This part of the proof is treated in Sections \ref{APPR}--\ref{Sn}, see Lemma \ref{EL1} and Proposition \ref{EP1}.

\item { In contrast to \cite{ChJNo}, we derive the renormalized equation for the parabolic problem for densities at the level of its weak formulation, see Section \ref{SE4.3}. In particular, we do not need its satisfaction almost everywhere in the time cylinder. This allows to simplify the approximation of the momentum equation:
compared to \cite{ChJNo} or \cite{NoSCM}, there is no need to consider the $\ep$-dependent power law dissipation.  }
\item For the remaining limit passages { $\ep\to 0$} and then $\delta\to 0$, we need to improve the estimates of pressure.  With the domination principle at hand, this is done via the Bogovskii operator exactly in the same way as for the simple mono-fluid case, cf. \cite{FNP} completed with \cite{AN-MP} or \cite{NoSCM}. { However, due to the non homogenous data, these estimates are available
only on compact subsets of $\Omega$.}
\item The main difficulty in both passages { $\ep\to 0$} and then $\delta\to 0$ is to pass to the limit in the non-linear pressure term $P(R,Z)$. It was observed for the first time by Vasseur et al. \cite{VWY} (and later improved in \cite[Proposition 7]{AN-MP}) that the quantity $Z/R$ is {\em "almost compact"} provided
$R$ dominates $Z$ and both quantities $R$ and $Z$ satisfy continuity equation with the same transporting velocity. The {\em almost compactness}  of $Z/R$
 is the property of the continuity equations and it is completely decoupled from the remaining equations in the system.
With this observation at hand, it is enough to prove the compactness for the quantity $\Pi(R_\ep,t,x):=P(R_\ep,R_\ep\theta)$  (resp. $\Pi(R_\delta,t,x):=
P(R_\delta,R_\delta\theta)$, where $\theta(t,x)$ is a given function (a ratio of weak limits of
the sequences $Z_\ep$ and $R_\ep$ resp. of the weak limits of sequences $Z_\delta$ and $R_\delta$). The task thus reduces practically to the task to prove the strong convergence of density in the "mono-fluid" case (with the pressure dependent
on only ``dominating'' density and time-space $(t,x)$). This process is nowadays well understood, cf. Lions \cite{L4}, Feireisl et al \cite{FNP}.\footnote{Since 2018, there exists an alternative approach to \cite{L4} due to Bresch, Jabin \cite{BrJab}, which is however not exploited in the present paper.} It passes through:
\begin{enumerate}
\item Derivation of the effective viscous flux identity.
\item Eliminating oscillations in the sequence of densities by using the theory of renormalized solutions due to DiPerna-Lions \cite{DL} which must be modified to accommodate the non
homogenous boundary conditions and renormalizing functions of several variables.
\end{enumerate}
The first point is very similar to the "mono-fluid" case. It is briefly explained in Section \ref{Sep} for $\ep\to 0$ and 
in Section \ref{Sdel} for $\delta\to 0.$ The second point is more delicate since the theory of renormalized solutions to the transport equation
is not available for the problems with the non-homogenous boundary conditions. Some elements of it are developed in Plotnikov, Sokolowski \cite{PloSo} and in \cite{ChJNo} but this
is not enough for our purpose. We treat this part in Section \ref{Sep} ($\ep\to 0$) and Section \ref{Sdel} ($\delta\to 0$) referring abundantly to Section \ref{TE}.
\item In  Section \ref{Appx} we gather all necessary specific results from functional analysis  needed throughout the proofs. 
\end{enumerate}

Theorem \ref{theorem2} is  the first rigorous result on existence of weak solutions for
a {\em version of the Baer-Nunziato type bi-fluid model with non zero inflow-outflow boundary conditions.}

The Di-Perna, Lions  transport theory { in conjonction
with the absence of improved estimates of pressure up to the boundary} imposes limitations on adiabatic coefficients $\gamma^\pm$ in formula (\ref{gammaP}) - or an equivalent limitation
on growth conditions of $P$ (see the next Section): at least one of them has to be greater or equal than {$2$}. In view of the existing mono-fluid theory, existence of weak solutions could be possibly hoped to be achieved if the  adiabatic coefficients of constituents were greater than $3/2$. This remains however a very interesting open problem.

In what follows, the scalar-valued functions will be printed with the usual font, the vector-valued functions will be printed in bold, and the tensor-valued functions with a special font, i.e. $\vr$ stands for the density, $\vu$ for the velocity field and $\tn{S}$ for the stress tensor.
We use standard notation for the Lebesgue and Sobolev spaces equipped by the standard norms $\|\cdot\|_{L^p(\Omega)}$ and $\|\cdot\|_{W^{k,p}(\Omega)}$, respectively. We will sometimes distinguish the scalar-, the vector- and the tensor-valued functions in the notation, i.e. we use $L^p(\Omega)$ for scalar quantities, $L^p(\Omega;\R^3)$ for vectors and $L^p(\Omega;\R^{3\times 3})$ for tensors. The indication of the $R$ or tensor character of the fields (here $;\R^3$ or $;\R^{3\times3}$) may be omitted, when there is no lack of confusion. The Bochner spaces of integrable functions on $I$ with values in a Banach space $X$ will be denoted $L^p(I;X)$;
likewise the spaces of continuous functions on $\overline I$ with values in $X$ will be denoted $C(\overline I;X)$. The norms in the Bochner spaces will be denoted $\|\cdot\|_{L^p(I;X)}$ and $\|\cdot\|_{C(\overline I;X)}$, respectively. In most cases, the Banach space $X$ will be either the Lebesgue or the Sobolev space.
Finally, we use vector spaces $C_{\rm weak}(\overline I; X)$  which is a subspace of $L^{\infty} (I; X)$  of continuous  functions in $\overline I$  with respect to weak topology of $X$ (meaning
that $f\in C_{\rm weak}(\overline I; X)$ iff $t\mapsto {\cal F}(f(t))$ belongs for any ${\cal F}\in X^*$ to $C( \overline I)$).

 The generic constants will be denoted by $c$, $\underline c$, $\overline c$, $C$,
$\underline C$, $\overline C$ and their value may change even in the same formula or in the same line. Sometimes, for two quantities $a$, $b$, we shall write
$$
a\aleq b\;\mbox{if $a\le c b$},\; c>0\;\mbox{a constant}, \; a\approx b\;\mbox{if}\, a\aleq b\;\mbox{and}\, b\aleq a.
$$
Here "constant" typically means a generic quantity independent on the approximating parameters of the problem (as number of Galerkin modes $n$,
artificial diffusion parameter $\ep$ or artificial pressure parameter $\delta$).

\section{Main results}\label{se2}

\subsection{Definition of weak solutions}

We first explain the notion of the weak solution to  problem (\ref{eq1.1}--\ref{eq1.3}) and to problem (\ref{e1}--\ref{e6}). Before starting the definition, we must underline, that, without  loss of generality, the boundary data (\ref{eq1.2}) (resp. (\ref{e6}){ )} are considered as a restriction to the boundary of functions defined  on the whole $\R^3$ (their regularity, as well as the regularity of the initial data will be specified later, in Section \ref{SEass}).

\begin{df}\label{d1}
A quintet $(\vr,z,R,Z,\vv=\vu-\vu_B)$ is a bounded energy weak solution to problem
(\ref{eq1.1}--\ref{eq1.3}), if the following holds:
\begin{enumerate}
\item The quintet belongs to the  functional spaces
$\vr,z,R, Z\geq 0$ a.e. in $I\times \Omega$, $(\vr,z,R,Z) \in  C_{\rm weak}(\overline I;$ $L^\gamma(\Omega))\cap L^{\gamma}(I; L^\gamma(
\partial\Omega;|\vu_B\cdot\vc n|{\rm d }S_x))$ 
with some $\gamma>1$,
$\vv \in L^2(I;W^{1,2}_0(\Omega;\R^3))$, $(\vr+z)|\vv|^2 \in L^\infty(I;L^1(\Omega))$, $P(R,Z) \in L^1(I\times \Omega)$, $(\vr+z)\vu\in C_{\rm weak}
(\overline I;L^q(\Omega))$ with some $q>1$.
\item Continuity equations
%
{
\begin{equation} \label{eq2.7}
\begin{array}{l}
\intO{r(\tau,\cdot)\varphi(\tau,\cdot)} - \intO{r_0(\cdot)\varphi(0,\cdot)} 
+  \int_0^\tau\int_{\Gamma^{\rm out}} r \vu_B \cdot \vc{n} \varphi
\ {\rm d}S_x{\rm d} t
\\ \\
=
 \int_0^\tau \int_\Omega \big(r \partial_t \varphi + r \vu \cdot \Grad \varphi\big) \dx \dt  
- \int_0^\tau\int_{\Gamma^{\rm in}} r_B \vu_B \cdot \vc{n} \varphi
\ {\rm d}S_x{\rm d} t 
\end{array}
\end{equation}
}
are satisfied
{ for any $\tau \in [0,T] $ and with any $\varphi\in C^1_c([0,T] \times\overline\Omega)$}, { where $r$ stands for $\vr$, $z$, $R$, $Z$.} 
\item Momentum equation
\begin{equation} \label{eq2.8}
\begin{aligned}
\int_\Omega (\vr+z)\vu \cdot \vcg{\varphi}(\tau,\cdot) \dx
 - \int_\Omega (\vr_0+z_0)\vu_0 \cdot \vcg{\varphi}(0,\cdot) \dx
=\int_0^\tau \int_\Omega \Big((\vr+z)\vu \cdot \partial_t \vcg{\varphi} \\
+ (\vr+z) \vu\otimes \vu: \Grad \vcg{\varphi} + P(R,Z) \Div \vcg{\varphi}-\tn S(\Grad\vu):\Grad\vcg{\varphi} \Big) \dx \dt 
\end{aligned}
\end{equation}
holds with  any $\tau\in [0,T]$ and $\vcg{\varphi} \in C^1_c([0,T) \times \Omega;\R^3)$.
\item Finally, the energy inequality
{
\begin{equation} \label{eq2.9}
\begin{aligned}
&\int_\Omega \Big(\frac 12 (\vr+z)|\vv|^2 + H(R,Z)\Big)(\tau,\cdot) \dx  
+\int_0^\tau \int_\Omega \tn S(\Grad\vu):\Grad\vu \dx\dt\\
&+ 
\int_0^\tau\int_{\Gamma^{\rm out}}H(R,Z)\vu_B\cdot \vc{n}{\rm d}S_x{\rm d}t
\\
\leq & \int_\Omega \Big(\frac{1}{2}(\vr_0+z_0)\vv_0^2 + H(R_0,Z_0)\Big) \dx
-\int_0^\tau\int_{\Gamma^{\rm in}}H(R_B,Z_B)\vu_B\cdot \vc{n}{\rm d}S_x{\rm d}t 
\\
+& \int_0^\tau\int _\Omega \Big(-P(R,Z){\rm div}\vu_B-\rho\vu\cdot \nabla \vu_{B}\cdot\vv + \tn S(\Grad\vu):\Grad\vu_B\Big)\dx
\end{aligned}
\end{equation}
}
is satisfied for a.a. $\tau \in (0,T)$, where $\vv_0=\vu_0-\vu_B$ and
{
\begin{equation}\label{H}
H(R,Z)=R\int_1^R\frac{P(s, s\frac Z R)}{s^2}{\rm d}s,\;\mbox{if $R>0$},\; H(0,Z)=0.
\end{equation}
}
\end{enumerate}

\end{df}

\begin{df}\label{d2}
A quartet $(\alpha,\vr,z,\vu)$ is a bounded energy weak solution to problem
(\ref{e1}--\ref{e6}),  if the following holds:
\begin{enumerate}
\item
 $\vr, z\geq 0$ a.e. in $I\times \Omega$, $(\vr,z) \in C_{\rm weak}(\overline I;L^\gamma(\Omega))
\cap L^\gamma(I;L^\gamma(\partial\Omega;|\vu_B\cdot\vc n|{\rm d}S_x))$
with some $\gamma>1$, $\alpha\in L^\infty(Q_T)\cap C_{\rm weak}(\overline I;L^\gamma(\Omega)){\cap L^\infty(I\times\partial\Omega)}$, $0\le\alpha\le 1$,
${ \vv=\vu-\vu_{\infty}}
 \in L^2(I;W^{1,2}_0(\Omega;\R^3))$, $(\vr+z)|\vu|^2 \in L^\infty(I;L^1(\Omega))$, $P(f(\alpha)\vr,g(\alpha) z) \in L^1(I\times \Omega)$,
$(\vr+z)\vu\in   C_{\rm weak}(\overline I;L^q(\Omega))$ with some $q>1$.
\item Continuity equations
{
\begin{equation} \label{eq2.7-}
\begin{array}{l}
\intO{r(\tau,\cdot)\varphi(\tau,\cdot)} - \intO{r_0(\cdot)\varphi(0,\cdot)} 
+  \int_0^\tau\int_{\Gamma^{\rm out}} r \vu_B \cdot \vc{n} \varphi
\ {\rm d}S_x{\rm d} t
\\ 
= 
\int_0^\tau \int_\Omega \big(r \partial_t \varphi + r \vu \cdot \Grad \varphi\big) \dx \dt   
- \int_0^\tau\int_{\Gamma^{\rm in}} r_B \vu_B \cdot \vc{n} \varphi
 {\rm d}S_x{\rm d} t 
\end{array}
\end{equation}
}
are satisfied for all $\tau\in [0,T]$ 
and with any { $\varphi\in C^1_c([0,T] \times\overline\Omega)$}, where $r$ stands for $\vr$, $z$.
\item Pure transport equation
\begin{equation}\label{eq2.7+}
\begin{array}{l}
\intO{\alpha(\tau,\cdot)\varphi(\tau,\cdot)} - \intO{\alpha_0(\cdot)\varphi(0,\cdot)} { + \int_0^\tau\int_{\Gamma{\rm out}} 
\alpha \vu_B \cdot \vc{n} \varphi
 {\rm d}S_x{\rm d} t} 
\\
=
\int_0^{ \tau} \int_\Omega \big(\alpha \partial_t \varphi + \alpha \vu \cdot \Grad \varphi{ +}\varphi\alpha{\rm div}\vu\big) \dx \dt  -\int_0^\tau\int_{\Gamma^{\rm in}} 
\alpha_B \vu_B \cdot \vc{n} \varphi
 {\rm d}S_x{\rm d} t 
\end{array}
\end{equation}
holds for all $\tau\in [0,T]$ with { any $\varphi\in C^1_c([0,T] \times\overline\Omega)$.} 
\item Momentum equation
\begin{equation} \label{eq2.8-}
\begin{aligned}
\int_\Omega (\vr+z)\vu \cdot \vcg{\varphi}(\tau,\cdot) \dx
- \int_\Omega (\vr_0+z_0)\vu_0 \cdot \vcg{\varphi}(0,\cdot) \dx
=\int_0^\tau \int_\Omega \big((\vr+z)\vu \cdot \partial_t \vcg{\varphi} \\+ (\vr+z) \vu\otimes \vu: \Grad \vcg{\varphi} +
P(f(\alpha)\vr,g(\alpha)z) \Div \vcg{\varphi}-\tn S(\Grad\vu):\Grad\vcg{\varphi}\big) \dx \dt 
\end{aligned}
\end{equation}
holds for all $\tau\in [0,T]$ with  any $\vcg{\varphi} \in C^1_c([0,T) \times \Omega;\R^3)$.
\item The energy inequality holds
\begin{equation} \label{eq2.9-}
\begin{aligned}
\int_\Omega \Big(\frac 12 (\vr+z)|\vu|^2 + H(f(\alpha)\vr, g(\alpha)z)\Big)(\tau,\cdot) \dx  
+ \int_0^\tau \int_\Omega \tn S(\Grad\vu):\Grad\vu \dx\dt\\ +
\int_0^\tau\int_{\Gamma^{\rm out}}H(f(\alpha)\vr, g(\alpha)z)\vu_B\cdot \vc{n}{\rm d}S_x{\rm d}t
\leq  \int_\Omega \Big(\frac{1}{2}(\vr_0 +z_0)\vu_0^2 + H(f(\alpha_0)\vr_0,g(\alpha_0)z_0)\Big) \dx\\
-\int_0^\tau\int_{\Gamma _{\rm in}}H(f(\alpha_B)\vr_B,g(\alpha_B)z_B)\vu_B\cdot \vc{n}{\rm d}S_x{\rm d}t 
\\
+\int_0^\tau\int _\Omega \Big(-P(f(\alpha)\vr,g(\alpha)z){\rm div}\vu_B-\rho\vu\cdot \nabla \vu_{B}\cdot\vv + \tn S(\Grad\vu):\Grad\vu_B\Big)\dx
{\rm d}t
\end{aligned}
\end{equation}
for a.a. $\tau \in (0,T)$, where $H$ is the same as in (\ref{eq2.9}).
\end{enumerate}
\end{df}

\subsection{Assumptions}\label{SEass}

Motivated by \cite[Section 2]{AN-MP} and \cite[Section 2.2]{NoSCM} we shall gather the hypotheses for Theorems
\ref{theorem1} and \ref{theorem2}. In this paper, we however concentrate to the phenomenons due to the
the effects of the non-zero inflow-outflow, and we do not insist on the most general hypotheses concerning constitutive law
for pressure, which would introduce to the problem { further} unessential technical difficulties. { We refer the reader to the Remark \ref{RemH}
for the possible relaxation of this part of hypotheses.}

To start, we define an {\em admissible inflow-outflow boundary { related}  to ${\vu}_B$} as follows: 
{
\begin{df}\label{admb}
We say that $\Omega$ is a domain with {\em admissible inflow-outflow boundary {related } to ${\vu}_B$} iff:
\begin{enumerate}
\item $\Omega$ is a bounded Lipschitz domain. 
\item
\begin{equation}\label{Gin0}
\Gamma^{\mathfrak{a}}=\emptyset\ \mbox{or}\
\Gamma^{\mathfrak{a}}=\cup_{k_{\mathfrak{a}}=1}^{\overline{ 
k_{\overline{\mathfrak{a}}}}}\Gamma^{\mathfrak{a}}_{k_{\mathfrak{a}}},
\ \mbox{ $\mathfrak{a}$ stands for ``in'', ``out''}
\end{equation}
and $\Gamma^{\rm in}_{k_{\rm in}}$, $\Gamma^{\rm out}_{k_{\rm out}}$ are
(open) parametrized $C^2$-surfaces or $C^2$ compact manifolds, cf. Section \ref{DG}.\footnote{Condition that each ``$\Gamma=\Gamma^{\rm in/out}_{\rm \cdot}$ is a $C^2$-surface or a $C^2$ compact manifold'' can be relaxed. It is a sufficient contition to guarantee existence of a projection $P$ to $\Gamma$ on a neighborghood $U$ of $\Gamma$, which is continuous on $U$.
  Indeed, this is the only condition from conclusion of Lemma \ref{LDG} which is used in the proof. Also,
conditions on $\mathfrak{g}^{\rm bd}$  could be relaxed. Indeed, the only thing 
we need in the proofs is that $\partial\Gamma^{\rm in}\cup\partial\Gamma^{\rm out}$ satisfies (\ref{Tubes}). These are the least conditions needed in the proofs and they enter into the game only through the Proposition \ref{LP2}.}

\item 
\begin{equation}\label{gbd}
 \partial\Gamma^{\rm in}\cup\partial\Gamma^{\rm out}=\mathfrak{g}^{\rm bd},
\end{equation}
where
$$
{\mathfrak{g}}^{\rm bd}=\Big(\cup_{k_{\rm bd}}^{\overline{k_{\rm bd}}}{{\mathfrak{g}}^{\rm bd}_{k_{\rm bd}}}\Big) \cup
\Big(\cup_{j_{\rm bd}}^{\overline{j_{\rm bd}}}{{\mathfrak{p}}^{\rm bd}_{j_{\rm bd}}}\Big)
$$
with ${\mathfrak{g}}^{\rm bd}_{k_{\rm bd}}$ being  bounded parametrized  (open) $C^1$-curves and ${\mathfrak{p}}^{\rm bd}_{j_{\rm bd}}$
points in $\R^3$.
\item The sets $\Gamma^{\rm in}_{k_{\rm in}}$, $\Gamma^{\rm out}_{k_{\rm out}}$,
${\mathfrak{g}}^{\rm bd}_{k_{\rm bd}}$,  ${\mathfrak{p}}^{\rm bd}_{j_{\rm bd}}$ have two by two empty intersection.
\end{enumerate}
\end{df}
}

We are now at the point to summarize the hypotheses for Theorem \ref{theorem1}.

\begin{enumerate}
\item{\it Boundary and initial conditions:}\footnote{The strict inequalities $r_B>0$ and $R_0>0$ are here for the sake of simplicity. They could be { relaxed} up to
$r_B\ge 0$ and $R_0\ge 0$.}
\begin{equation}\label{ruB}
0<r_B\in C_c(\R^3),\;{ \vu_B\in C_c^1(\R^3)},
\; r\;\mbox{stands for $\vr$, $z$, $R$, $Z$}.
\end{equation}
\begin{equation} \label{eq2.6}
0<R_0 \in L^\gamma(\Omega),\,{ \gamma\ge 2}, \; Z_0 \in L^\beta(\Omega) \; \text{ if } \beta > \gamma,\; (\vr_0+z_0)|\vu_0|^2\in L^1(\Omega).
\end{equation}
\begin{equation} \label{eq2.1}
(R_0, Z_0)(x)\in \overline{\cal O}, \; \underline F R_0(x)\le \vr_0(x)\le\overline F R_0(x),\; \underline G Z_0(x)\le z_0(x)\le\overline  G Z_0(x),
\end{equation}
$$
(R_B, Z_B)(x)\in \overline{\cal O}, \underline F R_B(x)\le \vr_B(x)\le\overline F R_B(x),\; \underline G Z_0(x)\le z_B(x)\le\overline  G Z_B(x),
$$
In the above $0<\underline F<\overline F$, $0<\underline G<\overline G$  and
 \begin{equation}\label{calO}
{\cal O}:=
(R,Z)\in R^2\,|\, \underline a R<Z<\overline a R\}\;\mbox{with some $0\le\underline a<\overline a$}.
\end{equation}

\item{\it Domain}
\begin{equation}\label{Om}
\mbox{$\Omega$ is a bounded domain with admissible inflow-outflow boundary { related} to ${{\vu}}_B$}.
\end{equation}

\item{\it Regularity and growth of the pressure function $P$:}
\begin{equation}\label{regP}
P\in C^1(\overline{\cal O})\cap C^2({\cal O}),\; P(0,0)=0.
\end{equation}
{
\begin{equation}\label{grP}
R^\gamma+ Z^{\beta}-1\aleq P(R,Z)\aleq R^\gamma+Z^\beta + 1
\;\mbox{in ${\cal O}$},
\end{equation}
\begin{equation}\label{dzP}
0\le\partial_Z P(R,Z)\aleq R^{\underline\gamma-1} + R^{\overline\gamma-1}\;\mbox{in ${\cal O}$ with some}\;\underline\gamma\in (0,1],\;
1\le\overline\gamma<\gamma +\gamma_{\rm Bog}
\end{equation}
where
$$
{ \gamma\ge 2},\;\beta >0,\;\gamma_{\rm Bog}=\min\{\frac 23\gamma-1,\frac \gamma 2\},
$$
\begin{equation}\label{drP}
 R^{\gamma-1}\aleq \partial_R P(R,Z) 
 \;\mbox{in ${\cal O}$}.
\end{equation}
Finally, 
\begin{equation}\label{convH}
H \;\mbox{is convex on}\; {\cal O}.
\end{equation}
}
\end{enumerate}

At this stage a few remarks impose.

{
\begin{rmk}\label{RemH}
\begin{enumerate}
\item The pressure function introduced in (\ref{Pg})
with $1\le\gamma^-\le \gamma^+$, { $\gamma^+\ge 2$} is an example of a constitutive law which satisfies all conditions (\ref{regP}--\ref{convH}).  
\item We introduce 
\begin{equation}\label{Ps}
\forall s\in  L^\infty(Q_T),\,\mbox{ such that}\ s\in [\underline a,\overline a],\;\Pi(R, t,x)=P(R,Rs(t,x)).
\end{equation} 
Due to (\ref{dzP}--\ref{drP}), there exists $d>0$ such that
\begin{equation}\label{Ps1}
 \mbox{for a.a. $(t,x)\in Q_T$, $\Pi(R,t,x)=dR^\gamma+\pi(R,t,x)$}
\end{equation}
where  for a.a. $(t,x)\in Q_T$, $R\mapsto\pi(R,t,x)$ is an non-decreasing function on $(0,\infty)$. This observation is an important point in the
proof of the strong convergence of the dominating density sequence.
\item The function $H$ defined in (\ref{H}) is called Helmholtz function. We easy verify that
\begin{equation}\label{regH}
H\in C^1({\cal O})\cap { C(\overline{\cal O})},\;H(1,Z)=0,\,\forall Z\in [\underline a,\overline a],\; H(R,Z)\ge \underline H>-\infty\;\mbox{for all $(R,Z)\in \overline{\cal O}$}.
\end{equation} 
provided
$P\in C^2({\cal O})\cap { C^1(\overline{\cal O})}$, $P(0,0)=0$  and it is a solution
of the first order partial differential equation
\begin{equation}\label{H*}
R\partial_R H(R,Z)+Z\partial_Z H(R,Z)-H(R,Z)=P(R,Z).
\end{equation}
\item Due to formula (\ref{H}), the function $H$ inherits the growth conditions of $P$, in particular, 
\begin{equation}\label{grH}
R^\gamma+Z^\beta-1\aleq H(R,Z)\aleq R^\gamma + Z^\beta+1 \;\mbox{for all $(R,Z)\in {\cal O}$}.
\end{equation}
\item
The conditions (\ref{regP}--\ref{convH}) are not the most general ones to guarantee the { existence} of weak solutions, cf. \cite{NoSCM}. Nevertheless, they provide a reasonable compromise between a presentable proof and overhelming technical complexity. Indeed: 

Condition (\ref{dzP}) could be replaced by a weaker one,
\begin{equation}\label{dzP+}
-1\aleq\partial_Z P(R,Z)\aleq R^{\underline\gamma-1} + R^{\overline\gamma-1}\;\mbox{in ${\cal O}$}
\end{equation}
with the range of $\underline\gamma$ and $\overline\gamma$
as in (\ref{dzP}).

Condition (\ref{drP}) could be replaced by a weaker one,
\begin{equation}\label{drP+}
 R^{\gamma-1}-1\aleq \partial_R P(R,Z) 
 \;\mbox{in ${\cal O}$ }.
\end{equation}

{ The most restrictive hypotheses (\ref{convH}) could be replaced by
\begin{equation}\label{convH+}
\forall (R,Z)\in {\cal O},\;|H(R,Z)-H_0(R,Z)|\aleq R^{\tilde\gamma}+1,\ 0<\tilde\gamma<\gamma,
\end{equation}
with some $H_0\in C^1({\cal O})\cap C(\overline{\cal O})$ convex on ${\cal O}$.}
\end{enumerate}
\end{rmk}

\subsection{Main results}\label{SEMR}

The first main result of the paper deals with the academic system (\ref{eq1.1}--\ref{eq1.3}) and  reads
\begin{thm} \label{theorem1}
 Under  Hypotheses (\ref{ruB}--\ref{convH}), problem (\ref{eq1.1}--\ref{eq1.3}) admits at least one { bounded energy} weak solution in the sense of Definition \ref{d1}.
Moreover, for all $t\in\overline I$, $(R(t,x),Z(t,x))\in \overline{\cal O}$, $\underline F R(t,x)\le \vr(t,x)\le\overline F R(t,x)$ and $\underline G Z(t,x)\le z(t,x)\le\overline G Z(t,x)$  for a.a. $x\in \Omega$, and further { for a.a. $(t,x)\in I\times\partial\Omega$, $(R(t,x),Z(t,x))\in \overline{\cal O}$, $\underline F R(t,x)\le \vr(t,x)\le\overline F R(t,x)$ and
$\underline G Z(t,x)\le z(t,x)\le\overline G Z(t,x)$.  
}
Finally,
$\vr,z,R,Z \in C(\overline I;L^1(\Omega))$,
$(\vr+z)\vu
\in C_{weak}([0,T);$ $L^{\frac{2\gamma}{\gamma+1}} (\Omega;\R^3))$  and $P(R,Z) \in L^q(I\times\Omega)$ for some $q>1$ and $Z \in  C_{weak}([0,T); L^\beta (\Omega)){ \cap L^\beta(I,L^\beta(\partial\Omega;|\vu_B\cdot \vc n|{\rm d}S_x))} $ if $\beta>\gamma$.
%
\end{thm}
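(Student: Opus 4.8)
The plan is to construct the weak solution by the multi-level approximation scheme outlined in the introduction and to pass to the limit successively in the three parameters $n\to\infty$ (Galerkin dimension), $\ep\to 0$ (artificial viscosity in the continuity equations) and $\delta\to 0$ (artificial pressure). First I would fix the approximation: solve the Galerkin momentum equation for $\vu_n$ coupled with the parabolic regularizations $\partial_t r-\ep\Delta r+\Div(r\vu_n)=0$ (with Robin-type boundary conditions encoding $r_B$ on $\Gamma^{\rm in}$) for $r\in\{\vr,z,R,Z\}$, following Crippa et al.\ \cite{Crippa} for the Lipschitz domain, and add the $\delta$-pressure term $\delta(R^\Gamma+Z^\Gamma)$ to the momentum equation; a fixed-point argument on a short time interval plus the energy estimate extends the solution to $[0,T]$. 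Here one invokes the maximum principle to propagate the bounds $(R,Z)\in\overline{\cal O}$, $\underline F R\le\vr\le\overline F R$, $\underline G Z\le z\le\overline G Z$ from the initial and boundary data (Lemma \ref{EL1}, Proposition \ref{EP1}). The energy inequality \eqref{eq2.9}, with the convexity \eqref{convH} of $H$ guaranteeing $H\ge\underline H>-\infty$, then yields the uniform bounds: $\vv\in L^2(I;W^{1,2}_0)$, $(\vr+z)|\vv|^2\in L^\infty(I;L^1)$, $H(R,Z)\in L^\infty(I;L^1)$, hence $R\in L^\infty(I;L^\gamma)$, $Z\in L^\infty(I;L^\beta)$ by \eqref{grH}, and the corresponding trace bounds on $\partial\Omega$ with weight $|\vu_B\cdot\vc n|$.

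Next I would carry out the limit $n\to\infty$, which is comparatively routine because the $\ep$-regularization gives compactness of the densities in suitable $L^p(Q_T)$ spaces (parabolic smoothing), and then the Aubin--Lions argument applied to $(\vr_n+z_n)\vu_n$ gives strong convergence of the momentum; the $\delta$-pressure term is uniformly integrable at this level. For $\ep\to 0$ the key new inputs are: (i) the pressure estimates obtained via the Bogovskii operator, which because of the non-homogeneous boundary data only give improved integrability $P(R,Z)\in L^q(I\times K)$ for compact $K\Subset\Omega$ — enough, with the $\delta$-term, for weak-$L^1$ compactness; (ii) the effective viscous flux identity (Section \ref{Sep}); (iii) the renormalized continuity equation, extended to non-homogeneous boundary data and renormalizing functions of several variables using the machinery of Section \ref{TE}, to kill density oscillations and recover $R_\ep\to R$, $Z_\ep\to Z$ a.e. The decisive structural trick is that one reduces the bi-fluid pressure $P(R_\ep,Z_\ep)$ to a mono-fluid-type pressure $\Pi(R_\ep,t,x)=P(R_\ep,R_\ep\theta_\ep)$ by writing $Z_\ep=R_\ep\,(Z_\ep/R_\ep)$ and using the \emph{almost compactness} of the ratio $Z_\ep/R_\ep$ (Section \ref{TE}, after Vasseur--Wen--Yu), so that the monotone-pressure decomposition \eqref{Ps1} $\Pi=dR^\gamma+\pi$ with $\pi$ non-decreasing applies. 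The limit $\delta\to 0$ repeats the same scheme (Section \ref{Sdel}): pressure estimates on compacts, effective viscous flux, almost compactness of $Z_\delta/R_\delta$, renormalization and strong convergence of $R_\delta$, and the condition $\gamma\ge 2$ is exactly what makes the DiPerna--Lions renormalization work in the absence of improved pressure estimates up to the boundary.

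Finally I would upgrade the regularity of the limit solution: the renormalized continuity equations for each $r\in\{\vr,z,R,Z\}$ together with the trace bounds give $r\in C(\overline I;L^1(\Omega))$ (continuity in time in $L^1$ follows from the renormalized equation with renormalizing functions approximating $s\mapsto s$, using that the boundary terms are controlled by the weighted $L^\gamma$ trace); $(\vr+z)\vu\in C_{\rm weak}([0,T);L^{2\gamma/(\gamma+1)})$ follows from the momentum equation and the bound $(\vr+z)|\vv|^2\in L^\infty(I;L^1)$ combined with $\vu_B$ smooth; $P(R,Z)\in L^q(I\times\Omega)$ for some $q>1$ is inherited from the $\delta\to 0$ pressure estimates (on compacts, plus the explicit behaviour near $\partial\Omega$ supplied by the energy bound and \eqref{grP}); and $Z\in C_{\rm weak}([0,T);L^\beta)\cap L^\beta(I;L^\beta(\partial\Omega;|\vu_B\cdot\vc n|{\rm d}S_x))$ when $\beta>\gamma$ comes from the extra $Z_0\in L^\beta$ hypothesis \eqref{eq2.6} propagated through the continuity equation for $Z$. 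The main obstacle, as in all Lions--Feireisl constructions, is step (iii) in the $\ep\to 0$ and $\delta\to 0$ passages: establishing the renormalized continuity equation and the strong convergence of the dominating density $R$ in the presence of non-homogeneous inflow/outflow data and with pressure estimates available only on compact subsets of $\Omega$ — this is precisely where the transport-equation theory of Section \ref{TE} (passage from continuity equations to transport equations, almost compactness of ratios, almost uniqueness) is indispensable, and where the restriction $\gamma\ge 2$ enters.
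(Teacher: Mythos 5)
Your proposal follows the paper's proof strategy essentially step for step: the three-level approximation $(n,\ep,\delta)$ with Robin-boundary parabolic densities solved on Lipschitz domains after Crippa et al., the maximum-principle propagation of the domination inequalities, local-in-space Bogovskii pressure estimates, the effective-viscous-flux identity, the almost-compactness of $Z/R$ reducing $P(R,Z)$ to a monotone one-variable pressure $\Pi(R,t,x)$, the renormalization machinery of Section \ref{TE} adapted to non-homogeneous boundary data, and the threshold $\gamma\ge 2$ for the DiPerna--Lions step — all match Sections \ref{APPR}--\ref{Sdel}. The one step you pass over as routine is that the limit $n\to\infty$ requires strong $L^2(Q_T)$ convergence of $\Grad(\vr_n+z_n)$ to handle the term $\ep\Grad(\vr_n+z_n)\cdot\Grad\vu_n$ in the Galerkin momentum balance, and on a merely Lipschitz domain this cannot come from parabolic smoothing; the paper instead derives it by comparing the renormalized $L^2$-identity (\ref{eq5.2}) at level $n$ with the corresponding identity for the limit (\ref{eq5.2!!}), using weak lower semicontinuity to force norm convergence — this is one of the places where working with the renormalized equation at the weak-formulation level (rather than via maximal parabolic regularity as in \cite{ChJNo}) is the decisive simplification.
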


The second main result of the paper deals with the one velocity Baer-Nunziato type system (\ref{e1}--\ref{e6}) and  reads:
\begin{thm} \label{theorem2}
Suppose that $f,g\in { C^1(0,1)}$
{ are} two
strictly monotone and { strictly positive} functions on interval $(0,1)$ and that the boundary conditions $\vr_B$, $z_B$, $\vu_B$ satisfy conditions (\ref{ruB}). 
Let { $\gamma\ge 2$}, $\beta >0$ and, in addition,
$$
\alpha_B\in C(\overline\Omega),\;0<\underline\alpha\le\alpha_B\le\overline\alpha<1,\; (f(\alpha_B)\vr_B, g(\alpha_B)z_B)(x)\in \overline{\cal O},
$$
$$
\alpha_0\in L^\infty(\Omega), \;0<\underline\alpha\le\alpha_0\le\overline\alpha<1,\; (f(\alpha_0)\vr_0, g(\alpha_0)z_0)(x)\in \overline{\cal O}
$$
\begin{equation} \label{eq2.6!}
{ 0<\vr_0} \in L^\gamma(\Omega), \; z_0 \in L^\beta(\Omega) \; \text{ if } \beta > \gamma,
\;
(\vr_0+z_0)|\vu_0|^2\in L^1(\Omega).
\end{equation}
Suppose that the domain $\Omega$ is a bounded Lipschitz domain with the admissible inflow-outflow boundary with respect to $\vu_B$, cf. (\ref{Om}). 
Finally suppose that the pressure $P$ and its Helmholtz function $H$ verify hypotheses (\ref{regP}--\ref{convH}).
Then the problem (\ref{e1}--\ref{e6}) admits at least one { bounded energy} weak solution in the sense of Definition \ref{d2}.
Moreover, for all $t\in\overline I$, $(f(\alpha)\vr(t,x),g(\alpha) z(t,x))\in \overline{\cal O}$ and $\underline\alpha\le\alpha(t,x)\le\overline\alpha$ for a.a. $x\in\Omega$, and further for a.a. $(t,x)\in I\times\partial\Omega$,
$(f(\alpha)\vr(t,x),g(\alpha) z(t,x))\in \overline{\cal O}$ and $\underline\alpha\le\alpha(t,x)\le\overline\alpha$. Finally,
$\alpha,\vr,z\in  C(\overline I;L^1(\Omega))$,  
$z \in C_{weak}(\overline I;$ $ L^{{\beta}} (\Omega)){ \cap L^\beta(I,L^\beta(\partial\Omega;|\vu_B\cdot \vc n|{\rm d}S_x))} $ if $\beta>\gamma$, $(\vr+z)\vu
\in C_{weak}([0,T);$ $L^{\frac{2\gamma}{\gamma+1}} (\Omega;\R^3))$
 and $P(f(\alpha)\vr,g(\alpha)z) \in L^q(I\times\Omega)$ with some $q>1$.
\end{thm}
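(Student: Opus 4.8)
The plan is to deduce Theorem \ref{theorem2} from Theorem \ref{theorem1} via the change of variables (\ref{change}), the one new ingredient being the identification of the two candidate concentrations produced by the academic system. \emph{Reduction to the academic system.} First I would verify that the transformed data meet the hypotheses of Theorem \ref{theorem1}. Since $f,g\in C^1(0,1)$ are strictly monotone and strictly positive and $\alpha_0,\alpha_B$ take values in the compact subinterval $[\underline\alpha,\overline\alpha]$ of $(0,1)$, the numbers
$$
\underline F:=\Big(\max_{[\underline\alpha,\overline\alpha]}f\Big)^{-1},\ \ \overline F:=\Big(\min_{[\underline\alpha,\overline\alpha]}f\Big)^{-1},\ \ \underline G:=\Big(\max_{[\underline\alpha,\overline\alpha]}g\Big)^{-1},\ \ \overline G:=\Big(\min_{[\underline\alpha,\overline\alpha]}g\Big)^{-1}
$$
are finite and positive, and with $R_0:=f(\alpha_0)\vr_0$, $Z_0:=g(\alpha_0)z_0$, $R_B:=f(\alpha_B)\vr_B$, $Z_B:=g(\alpha_B)z_B$ the domination relations in (\ref{eq2.1}) hold; the remaining requirements (\ref{ruB})--(\ref{eq2.6}) and the memberships $(R_0,Z_0),(R_B,Z_B)\in\overline{\cal O}$ are either immediate or assumed, while the hypotheses on $\Omega$, $P$, $H$ are assumed verbatim. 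Theorem \ref{theorem1} then produces a bounded energy weak solution $(\vr,z,R,Z,\vu)$ of (\ref{eq1.1}--\ref{eq1.3}) with, for every $t\in\overline I$, $(R,Z)\in\overline{\cal O}$, $\underline F R\le\vr\le\overline F R$ and $\underline G Z\le z\le\overline G Z$ a.e. in $\Omega$, and the analogous bounds a.e. on $I\times\partial\Omega$.

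\emph{Recovery of the concentration.} The domination bounds give $R/\vr\in[1/\overline F,1/\underline F]=f([\underline\alpha,\overline\alpha])$ wherever $\vr>0$ (which holds a.e., positivity being propagated from the data), so $\alpha:=f^{-1}(R/\vr)$ is well defined a.e. in $Q_T$ and a.e. on $I\times\partial\Omega$ and satisfies $\underline\alpha\le\alpha\le\overline\alpha$; symmetrically $\tilde\alpha:=g^{-1}(Z/z)$. Since $\vr$ and $R$ solve the renormalized continuity equation with the same velocity $\vu$ and are mutually comparable, the results of Section \ref{TE} on the passage from two continuity equations to a pure transport equation, combined with the renormalization property applied to the $C^1$ nonlinearity $f^{-1}$, show that $\alpha$ is a weak solution of the pure transport equation (\ref{eq2.7+}) with initial datum $\alpha_0$ and boundary datum $\alpha_B$ (here one uses that the traces of $\vr$, $R$ on $\Gamma^{\rm in}$ are $\vr_B$, $f(\alpha_B)\vr_B$, so that $R/\vr$ has trace $f(\alpha_B)$). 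The very same argument applied to the pair $(z,Z)$ shows that $\tilde\alpha$ is also a weak solution of (\ref{eq2.7+}) with the same data $\alpha_0$, $\alpha_B$.

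\emph{Identification and conclusion.} By the almost uniqueness of solutions to the pure transport equation, Corollary \ref{teu}, two bounded weak solutions of (\ref{eq2.7+}) sharing the same initial and boundary data agree a.e.; hence $\alpha=\tilde\alpha$ a.e. in $Q_T$ and on $I\times\partial\Omega$. Therefore $R=f(\alpha)\vr$ and $Z=g(\alpha)z$, whence $P(R,Z)=P(f(\alpha)\vr,g(\alpha)z)$ and $H(R,Z)=H(f(\alpha)\vr,g(\alpha)z)$; inserting these identities into (\ref{eq2.8}) and (\ref{eq2.9}) yields (\ref{eq2.8-}) and (\ref{eq2.9-}), the continuity equations (\ref{eq2.7-}) for $\vr$, $z$ are already the first two lines of (\ref{eq1.1}), and (\ref{eq2.7+}) has just been established. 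All function-space, weak-continuity and integrability assertions of Definition \ref{d2} and of Theorem \ref{theorem2} --- in particular $\vr,z,\alpha\in C(\overline I;L^1(\Omega))$, $\alpha\in L^\infty(Q_T)\cap C_{\rm weak}(\overline I;L^\gamma(\Omega))\cap L^\infty(I\times\partial\Omega)$ and the $L^\beta$-integrability of $z=Z/g(\alpha)$ when $\beta>\gamma$ --- are then inherited from the corresponding properties of $(\vr,z,R,Z,\vu)$ given by Theorem \ref{theorem1}, using the two-sided bounds on $\alpha$ and on $g(\alpha)$.

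\emph{Main obstacle.} The entire weight of the argument is carried by Section \ref{TE}: on the one hand, the statement that the ratio of two comparable solutions of the continuity equation, composed with a smooth function, is itself a weak (renormalized) solution of the transport equation --- delicate because $\vu\in L^2(I;W^{1,2}_0(\Omega;\R^3))$ and the densities lie only in $L^\gamma$, so classical DiPerna--Lions does not apply directly and the non-homogeneous inflow/outflow boundary must be accommodated; on the other hand, the almost-uniqueness Corollary \ref{teu}, which is precisely what permits the identification $\alpha=\tilde\alpha$ in the absence of the usual hypothesis ${\rm div}\,\vu\in L^1(0,T;L^\infty(\Omega))$. Granted these, Theorem \ref{theorem2} reduces to a bookkeeping of the change of variables.
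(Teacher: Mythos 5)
Your proposal is correct and follows essentially the same route as the paper: reduce to Theorem \ref{theorem1} via the change of variables, read off $\alpha$ and $\tilde\alpha$ from the two density ratios, apply Corollary \ref{cont-tr} together with the renormalization from Proposition \ref{LP2} to show both solve the pure transport equation, and invoke the almost-uniqueness Corollary \ref{teu} to identify them. The only (harmless) imprecision is the claim that $\alpha=\tilde\alpha$ a.e.\ in $Q_T$; Corollary \ref{teu} yields equality only on $\{\vr>0\}$, but the identities $R=f(\alpha)\vr$ and $Z=g(\alpha)z$ hold trivially on the complement where all densities vanish, so the substitution into \eqref{eq2.8} and \eqref{eq2.9} goes through unchanged.
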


{
\begin{rmk}\label{rth1+2}
\begin{enumerate}
\item
 As already mentioned in Remark \ref{RemH}, Theorems \ref{theorem1}, \ref{theorem2} remain still valid if we replace
the hypotheses (\ref{dzP}--\ref{convH}) for the constitutive law for pressure by weaker hypotheses (\ref{dzP+}--\ref{convH+}).
\item  The  growth corresponding to $\beta\in (0,1]$ is covered by Theorems \ref{theorem1}--\ref{theorem2} only provided $0<\underline a$ and 
provided one considers the version of the theorems with weaker hypotheses (\ref{dzP+}--\ref{convH+}). Indeed, $\beta\in (0,1]$ is in contradiction with (\ref{convH}) whatever is the value of $\underline a$ and it is in contradiction with  (\ref{dzP+}) if $\underline a =0$.
\end{enumerate}
\end{rmk}
}

Theorem \ref{theorem1} will be proved through Sections \ref{APPR}--\ref{Sdel}. Theorem \ref{theorem2} is proved in Section \ref{Realistic}. { Without loss of generality, we shall concentrate on the case when $\Gamma^{\rm in}$ and $\Gamma^{\rm out}$ are not empty. For the sake of simplicity, we shall also discard the situation, when $\Gamma^{\rm in}$ or $\Gamma^{\rm out}$  contain a compact manifold. Finally, also for the sake of simplicity, we shall limit ourselves to the case $\beta\le\gamma$.}

\section{Transport equations with non homogenous boudary data}\label{TE}

The properties of solutions to the continuity equation and an interplay between the solutions of continuity and transport equations  play essential role
in the proofs in this paper. These results are well known in the case, when the transporting velocity $\vc u$ is zero at the boundary,
essentially due to seminal paper of Di-Perna, Lions \cite{DL}, see Vasseur et al. \cite{VWY}, and \cite[Section 3]{AN-MP}, \cite[Section 3]{NoSCM} for further extension, still with the condition $\vu=0$ at $\partial\Omega$. The purpose of this section is to extend them to non-homogenous boundary data. This section is therefore of independent interest.

\subsection{Some elements of differential geometry}\label{DG}

Let $k=1,2,\ldots$. A set $\Gamma\subset \R^3$ (resp. $\mathfrak{g}
\subset \R^3$) is a $C^k$-parametrized bounded surface (resp. $C^k$-parametrized bounded curve) 
iff there exists a bounded domain ${\cal O}\subset \R^2$ (resp. ${\cal O}\subset \R$) a  bijection ${\cal G}\in C^k(\overline{\cal O};\overline{\Gamma})$
(resp. ${\cal G}\in C^k(\overline{\cal O};\overline{ \mathfrak{g}})$) such that ${\cal G}$ is an $C^1$-differomorphism from ${\cal O}$ onto $\Gamma$ (resp. $\mathfrak{g}$).\footnote{ Meaning that ${\cal G}$  is a bijection of regularity $C^1({\cal O})$ and the differential $\Grad {\cal G} (x)$ is a bijection of $R^2$ to $R^2$ (resp.
$R$ to $R$) for any $x\in {\cal O}$.}
For a set $A\subset \R^3$ we denote $d_A(x)={\rm dist}(x;A):=\inf_{y\in A}|x-y|$ the distance function to $A$ and $\vc n=\frac{\partial_1G\times\partial_2G}{|{\partial_1G\times\partial_2G}|}$ the (outer) normal vector to the surface $\Gamma\subset\partial\Omega$ with respect to $\Omega$.\\

{ We define
$$
T(\Gamma;\ep):=\{x\in \R^3\,|\,x=x_\Gamma+s\vc n(x_\Gamma),\,x_\Gamma\in\Gamma,\,s\in { (-\ep,\ep)}\},
$$
\begin{equation}\label{Tset}
{ T^\pm(\Gamma;\ep)}:=\{x\in \R^3\,|\,x=x_\Gamma+s\vc n(x_\Gamma),\,x_\Gamma\in\Gamma,\,\pm s\in { (0,\ep)}\},
\end{equation}
$$
T(\mathfrak{g};\ep):= B(\mathfrak{g};\ep)\setminus \overline{B(\partial\mathfrak{g};\ep)},
$$
where, for a subset $A$ of $\R^3$,
$$
B(A;\ep):=\{x\in \R^3|{\rm dist}(A;x)<\ep\}.
$$
It is well known that $d_A$ is 1-Lipschitz function on $\R^3$ and if $A$ is closed, for almost all $x\notin A$, there exists a unique point $P_A(x)\in A$ nearest to $A$ such that 
\begin{equation}\label{dLipschitz}
\Grad d_A(x)= \frac{x-P_A(x)}{d_A(x)},
\end{equation}
cf. Ziemer \cite[Exercice 1.15]{Zi}. We also recall that if $k\ge1$,
\begin{equation}\label{Tubes}
|T(\Gamma;\ep)|\aleq \ep,\quad |T(\mathfrak{g};\ep)|\aleq \ep^2,\
|B(\partial\mathfrak{g};\ep)|\aleq \ep^3
\end{equation}
cf. Gray \cite{Gray}.

The following Lemma resumes the properties of distance and projection
to $\Gamma$ which can be deduced from Theorems 1,2 in Foote \cite{Foote}. 
\begin{lem}\label{LDG}
Let $k\ge 2$. Then there exists $\overline\ep>0$ such that for all $0<\ep\le\overline\ep$, we have the following:
\begin{enumerate}
\item $T_\ep:=T(\Gamma;\ep)$, { $T^\pm_\ep:=T^\pm(\Gamma;\ep)$} are open sets in $\R^3$.
\item $\forall x\in T_{\ep},\;\exists! x_{\Gamma}\in\Gamma,\;|x_{\Gamma}-x|=d_\Gamma(x)$. We denote $P_\Gamma(x):=x_{\Gamma}$. Then $P_\Gamma\in C^{k-1}(T_{\ep})$. 
\item $
d_\Gamma\in C^{k}( {T_{\ep}^+\cup\Gamma})\cup C^{k}(T_{\ep}^-\cup\Gamma)$ 
 and for all $x\in T^\pm_{\ep}$, $\Grad d_\Gamma(x)=\frac{x-P_\Gamma(x)}{|x-P_\Gamma(x)|}=\pm\vc n(P_\Gamma(x))$.
 \end{enumerate}
\end{lem}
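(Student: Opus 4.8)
The statement to be proved is Lemma \ref{LDG}, which collects the standard properties of the distance function $d_\Gamma$ and the nearest-point projection $P_\Gamma$ onto a parametrized $C^k$-surface $\Gamma$ ($k\ge 2$), in a tubular neighborhood $T_\ep=T(\Gamma;\ep)$. The plan is to reduce everything to the cited results of Foote \cite{Foote} (Theorems 1 and 2 there), which treat exactly the smoothness of the metric projection onto a $C^k$-submanifold, together with the elementary Lipschitz/differentiability facts about distance functions recorded in (\ref{dLipschitz}). So the proof will be essentially a matter of verifying that the hypotheses of Foote's theorems hold in our situation and then unwinding the definition (\ref{Tset}) of the one-sided tubes $T^\pm(\Gamma;\ep)$.

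\emph{Step 1 (choice of $\overline\ep$ and openness).} First I would invoke the fact that a compact $C^2$ (or $C^k$, $k\ge2$) surface without boundary, or a relatively compact piece of such a surface, has positive \emph{reach}: there is $\overline\ep>0$ such that every point within distance $\overline\ep$ of $\Gamma$ has a \emph{unique} nearest point on $\overline\Gamma$ lying in the relatively open set $\Gamma$ (one uses here that $\Gamma$ is a $C^1$-diffeomorphic image of a bounded domain ${\cal O}\subset\R^2$ and that the normal exponential map $(x_\Gamma,s)\mapsto x_\Gamma+s\,\vc n(x_\Gamma)$ is a local $C^{k-1}$-diffeomorphism, hence injective on $\Gamma\times(-\overline\ep,\overline\ep)$ after shrinking $\overline\ep$). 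Since $\Gamma\times(-\ep,\ep)\ni(x_\Gamma,s)\mapsto x_\Gamma+s\vc n(x_\Gamma)$ is then a $C^{k-1}$-diffeomorphism onto $T_\ep$, and $\Gamma\times(0,\ep)$, $\Gamma\times(-\ep,0)$ are open, the sets $T_\ep$, $T^\pm_\ep$ are open in $\R^3$; this gives item 1.

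\emph{Step 2 (uniqueness of the nearest point and regularity of $P_\Gamma$).} For $x\in T_\ep$, uniqueness of $x_\Gamma\in\Gamma$ with $|x_\Gamma-x|=d_\Gamma(x)$ is precisely the positive-reach property from Step 1, so $P_\Gamma(x):=x_\Gamma$ is well defined. Its $C^{k-1}$ regularity on $T_\ep$ is Foote's theorem (the metric projection onto a $C^k$-submanifold is $C^{k-1}$ on a tubular neighborhood); equivalently it follows from the inverse of the normal exponential map, which is $C^{k-1}$, composed with the (smooth) projection onto the $\Gamma$-factor. This yields item 2.

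\emph{Step 3 (regularity and gradient of $d_\Gamma$).} On $T^+_\ep$ one has $d_\Gamma(x)=s$ where $(x_\Gamma,s)=(\text{exp}^\perp)^{-1}(x)$ with $s>0$, so $d_\Gamma\in C^k$ there (the exponential map is $C^{k-1}$ as a diffeomorphism, but the $s$-coordinate is $\langle x-P_\Gamma(x),\vc n(P_\Gamma(x))\rangle$ and a direct computation, or the eikonal identity $|\Grad d_\Gamma|=1$, upgrades this to $C^k$ on $T^+_\ep\cup\Gamma$); the same on $T^-_\ep\cup\Gamma$ with $s<0$, giving $d_\Gamma\in C^k(T^+_\ep\cup\Gamma)\cap C^k(T^-_\ep\cup\Gamma)$. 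The gradient formula $\Grad d_\Gamma(x)=\frac{x-P_\Gamma(x)}{|x-P_\Gamma(x)|}$ is (\ref{dLipschitz}) (valid for a.e. $x$, but now everywhere on $T^\pm_\ep$ by the regularity just established); and since $x-P_\Gamma(x)=s\,\vc n(P_\Gamma(x))$ with $\pm s>0$ on $T^\pm_\ep$, this equals $\pm\vc n(P_\Gamma(x))$. This gives item 3. The main obstacle, and the only place any real work is needed, is Step 1/Step 2 — establishing uniformity of the reach $\overline\ep$ and extracting the precise $C^{k-1}$ (resp. $C^k$ for $d_\Gamma$) regularity — but this is exactly what the quoted theorems of Foote supply, so the proof reduces to citing them and checking their hypotheses; everything else is a routine unwinding of definitions.
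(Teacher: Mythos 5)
Your proposal is correct and takes essentially the same route as the paper: the paper disposes of Lemma \ref{LDG} by directly invoking Theorems~1 and~2 of Foote \cite{Foote} together with the Lipschitz/gradient identity (\ref{dLipschitz}), with no further elaboration. Your argument is a more detailed unwinding of exactly those citations via the normal exponential map and tubular-neighborhood picture (including the bootstrap from $P_\Gamma\in C^{k-1}$ to $d_\Gamma\in C^k$ through $\Grad d_\Gamma=\pm\vc n\circ P_\Gamma$), so the substance agrees with the paper's intent.
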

}

\subsection{Renormalized solutions to families of transport equations}\label{DR}

We consider the general transport equations on the time-space cylinder $Q_T=I\times\Omega$, $\Omega$ a bounded Lipschitz domain in $\R^3$  
and $I=(0,T)$, $T>0$ a time interval. The equations read:
\begin{equation}\label{co1}
{\partial_t r+{\rm div}\,(r\vu) + rv=0\;\mbox{in $(0,T)\times\Omega$}}
\end{equation}
with initial and boundary conditions
$$
r(0,\cdot)=r_0(\cdot)\;\mbox{in $\Omega$},\; r|_{I\times\Gamma_{\rm in}}=r_B.
$$
Equation (\ref{co1}) is called {\em continuity equation} if $v=0$ and {\em pure transport equation} if $v=-\Div\vu$.

We shall consider the following regularity of transporting coefficients 
\begin{equation}\label{tvu}
\vu=\vv+\vu_B,\; \vv\in L^2(I; W_0^{1,2}(\Omega; \R^3)),\;v\in L^2(Q_T).
\end{equation}
A function  { $0\le r\in L^2(Q)\cap L^\infty(I,L^\gamma(\Omega))\cap
L^\gamma(I;L^\gamma(\partial\Omega;|\vu_B\cdot\vc n|{\rm d}S_x))$} is a   weak solution of the transport equation (\ref{co1}) with boundary data $r_B$ and initial data $r_0$ 
iff
\begin{equation}\label{cow}
\int_0^T\int_{\Gamma^{\rm in}}r_B\vu_B\cdot\vc n{\rm d}S{\rm d}t +
\int_0^T\int_{\Gamma^{\rm out}}r\vu_B\cdot\vc n{\rm d}S{\rm d}t
-\intO{r_0\varphi(0,\cdot)}
\end{equation}
$$
=\int_0^T\intO{\Big(r\partial_t\varphi
+r\vu\cdot\Grad\varphi
-r v\varphi\Big)}{\rm d}t
$$
holds
with any { $\varphi\in C^1_c([0,T)\times \overline\Omega)$.}

We shall need the following result on the renormalized solutions to the transport equation, which is of independent interest.
It is formulated in the specific functional setting needed for the purpose of this paper and its formulation could be easily generalized to the $L^p-L^q$ setting.

\begin{prop}{\rm [Renormalized solutions to families of transport equations]}\label{LP2} 
Let $\Omega\subset \R^3$ be a bounded Lipschitz domain and $\gamma>1$.
Let $\vu_B$ belong to
class (\ref{ruB}), let each component of ${\mathfrak{ r}}_B=(r_B^{(1)},\ldots,r_B^{(M)})$, $\mathfrak{ s}_B=(s_B^{(1)},\ldots,s_B^{(N)})$ be non negative function in $C(\overline\Omega)$  and let each component of ${\mathfrak{r}}_0=(r_0^{(1)},\ldots,r_0^{(M)})$, ${\mathfrak{ s}}_0=(s_0^{(1)},\ldots,s_0^{(N)})$ be non negative and belong
to  $L^\gamma(\Omega)$ {(for the components of $\mathfrak{r}$) 
resp. to $L^\infty(\Omega)$ (for the components of $\mathfrak{s}$)}. Suppose that $\Omega$ has an {\em admissible inflow-outflow boundary relative to ${\vu}_B$}
in the sense of Definition \ref{admb}.\footnote{ As we already mentioned in Definition \ref{admb}, condition that each ``$\Gamma=\Gamma^{\rm in/out}_{\rm \cdot}$ is a $C^2$-surface or a $C^2$ compact manifold'' can be relaxed. It is a sufficient contition to guarantee existence of a projection $P$ to $\Gamma$ on a neighborghood $U$ of $\Gamma$, which is continuous on $U$.   Indeed, this is the only condition from conclusion of Lemma \ref{LDG} which is used in the proof. Also,
conditions on $\mathfrak{g}^{\rm bd}$ in Definition \ref{admb} could be relaxed. The only thing 
we need in the proofs is that $\partial\Gamma^{\rm in}\cup\partial\Gamma^{\rm out}$ satisfies (\ref{Tubes}).}  Assume further that $\vu$ belongs to the class (\ref{tvu}) and that 
$$
0\le\mathfrak{r}=[(r^{(1)},\ldots,r^{(M)}]\in L^2(I;L^2(\Omega))\cap L^\infty(I,L^\gamma(\Omega))\cap { L^\gamma(I;L^\gamma(\Gamma^{\rm out};|\vu_B\cdot\vc n|{\rm d}S_x)),}
$$
$$
0\le\mathfrak{s}=[s^{(1)},\ldots,s^{(N)})\in L^\infty((0,T)\times\Omega)
{ \cap L^\infty(I;L^\infty({\Gamma^{\rm out}};|\vu_B\cdot\vc n|{\rm d}S_x))}
$$ 
are such that each component of $\mathfrak{r}$ is a weak solution of the continuity equation (\ref{co1}) (i.e., it satisfies (\ref{cow}) with
$\vu$ and $v=0$), while each component of $\mathfrak{s}$ is a weak solution the pure transport equation (\ref{co1}) (i.e., it satisfies (\ref{cow}) with
$\vu$ and $v=-\Div \vu$). In the above the sign "$\le$" means that each component of the vector is a non negative number.

Then there holds:
\begin{enumerate}
\item The quantities $\mathfrak{r}, B(\mathfrak{r})\in C_{\rm weak}(\overline I; L^\gamma(\Omega))\cap C(\overline I; L^p(\Omega))$, $1\le p<\gamma$  and we have
{
\begin{equation}\label{P3}
\intO{ B(\mathfrak{r}) \varphi}\Big|_0^\tau +  \int_0^\tau\int_{\Gamma^{\rm out}} B(\mathfrak{r}) 
{\vu}_B \cdot \vc{n} \varphi \ {\rm d}S_x {\rm d} t +\int_0^\tau\int_{\Gamma^{\rm in}} B(\mathfrak{r}_B) 
{\vu}_B \cdot \vc{n} \varphi \ {\rm d}S_x {\rm d} t 
\end{equation}
$$
=
\int_0^\tau\intO{ \Big({ B({\mathfrak{ r}})\partial_t\varphi}+B(\mathfrak{r}) {\vu} \cdot \Grad \varphi -\varphi\left( B'(\mathfrak{r}) \mathfrak{r} - B(\mathfrak{r}) \right) \Div{\vu} \Big)}  {\rm d} t,\ B(\mathfrak{r}(0))=B(\mathfrak{r}_0)  
$$
}
with any $\tau\in\overline I$, 
for any { $\varphi \in  C_c^1([0,T]\times  
\overline\Omega$)} and 
any
\begin{equation}\label{B}
B\in C([0,\infty)^M),\;\nabla_{\mathfrak{r}}B\in L^\infty((0,\infty)^M).
\end{equation}

\item
The quantities $\mathfrak{s}, B(\mathfrak{s})\in C([0,T]; L^p(\Omega))$, $1\le p<\infty$ and we have
{
\begin{equation}\label{P3s}
\intO{ B(\mathfrak{s}) \varphi}\Big|_0^\tau + \int_0^\tau\int_{\Gamma^{\rm out}} B(\mathfrak{s}){\vu}_B \cdot \vc{n} \varphi \ {\rm d}S_x {\rm d} t  +  \int_0^\tau\int_{\Gamma^{\rm in}} B(\mathfrak{s}_B) 
{\vu}_B \cdot \vc{n} \varphi \ {\rm d}S_x {\rm d} t 
\end{equation}
$$
=
\int_0^\tau\intO{ \Big({ B({\mathfrak{ s}})\partial_t\varphi}+B(\mathfrak{s}) {\vu} \cdot \Grad \varphi +\varphi
B(\mathfrak{s})  \Div {\vu} \Big)}  {\rm d} t,\ B(\mathfrak{s}(0))=B(\mathfrak{s}_0)  
$$
with any $\tau\in \overline I$, 
for any  $\varphi \in  C_c^1(\overline I\times\overline\Omega)$
}
and any $B$ in class
\begin{equation}\label{Bs}
B\in C([0,\infty)^N)\cap C^1((0,\infty)^N).
\end{equation}

\item Let now $r$ be one component of $\mathfrak{r}$. Then
$rB(\mathfrak{s})\in C_{\rm weak}([0,T];L^\gamma(\Omega))\cap C([0,T];L^p(\Omega))$, $1\le p<\gamma$ and we have:
{
\begin{equation}\label{P3rs}
\intO{ rB(\mathfrak{s}) \varphi}\Big|_0^\tau + \int_0^\tau\int_{\Gamma^{\rm out}} r B(\mathfrak{s}) 
{\vu}_B \cdot \vc{n} \varphi \ {\rm d}S_x {\rm d} t
+ \int_0^\tau\int_{\Gamma^{\rm in}} r_B B(\mathfrak{s}_B) 
{\vu}_B \cdot \vc{n} \varphi \ {\rm d}S_x {\rm d} t 
\end{equation}
$$
=
\int_0^\tau\intO{ \Big({ r  B({\mathfrak{ s}})\partial_t\varphi}+ rB(\mathfrak{s}) {\vu} \cdot \Grad \varphi\Big)}, \ \mathfrak{r}(0)
B(\mathfrak{s}(0))= \mathfrak{r}_0
B(\mathfrak{s}_0)
$$
with any $\tau\in \overline I$, 
for any {$\varphi \in  C_c^1(\overline I\times \overline\Omega))$,} and 
any $B$ in the class (\ref{Bs}).
}
\end{enumerate}
\end{prop}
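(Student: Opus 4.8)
The plan is to prove all three statements by the regularization technique of DiPerna--Lions \cite{DL}, the only genuinely new ingredient being the analysis near the inflow and outflow parts of the boundary, for which I use the tubular-neighbourhood coordinates furnished by Lemma \ref{LDG}. Fix a small $\ep>0$ and a partition of unity $1=\sum_i\chi_i$ on a neighbourhood of $\overline\Omega$ subordinate to: (a) an open set $\Omega_{\rm int}\Subset\Omega$; (b) the tubular neighbourhoods $T(\Gamma^{\rm in}_{k};\ep)$, $T(\Gamma^{\rm out}_{k};\ep)$ of the smooth pieces of $\Gamma^{\rm in}$, $\Gamma^{\rm out}$; (c) a neighbourhood of the tangential part $\Gamma^{0}$; and (d) the neighbourhoods $T(\mathfrak{g}^{\rm bd}_{k};\ep)$ of the curves and small balls around the points of the lower-dimensional ``bad set'' $\mathfrak{g}^{\rm bd}=\partial\Gamma^{\rm in}\cup\partial\Gamma^{\rm out}$. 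The contributions of the pieces (c) and (d) are shown to be negligible as $\ep\to0$: on (c) because $|\vu_B\cdot\vc n|$ is uniformly small there by continuity of $\vu_B$ and the boundary integrands are controlled by it, and on (d) because of the volume bounds $|T(\mathfrak{g};\ep)|\aleq\ep^2$, $|B(\partial\mathfrak{g};\ep)|\aleq\ep^3$ of (\ref{Tubes}) together with the equi-integrability of $\mathfrak{r}$ coming from its $L^\infty(I;L^\gamma(\Omega))$ bound; this is precisely where the geometric hypotheses of Definition \ref{admb} enter. It then suffices to argue on $\Omega_{\rm int}$ and on one smooth in/outflow surface at a time.

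\textbf{Step 1: renormalization of the continuity equation (statement (i)).} On the interior piece one mollifies in space, obtaining $\partial_t(\chi_i r)_\delta+\Div((\chi_i r)_\delta\vu)=E^{(i)}_\delta$ modulo terms carrying derivatives of $\chi_i$, where $E^{(i)}_\delta$ is the Friedrichs commutator; since $\vu=\vv+\vu_B$ with $\vv\in L^2(I;W^{1,2}_0(\Omega;\R^3))$ and $\vu_B\in C^1_c(\R^3)$, and since $r\in L^2(Q_T)\cap L^\infty(I;L^\gamma(\Omega))$, the commutator lemma gives $E^{(i)}_\delta\to0$ in $L^1(I\times\Omega)$. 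Once $(\chi_i r)_\delta$ is smooth in $x$ one multiplies by $B'$, uses the chain rule, lets $\delta\to0$, and recovers the interior contribution to (\ref{P3}) (the vector character of $B$ is harmless, only $\nabla_{\mathfrak r}B\in L^\infty$ is used). On a smooth surface $\Gamma=\Gamma^{\rm in/out}_{k}$ one passes, via Lemma \ref{LDG}, to the flattened variables $y=(P_\Gamma(x),d_\Gamma(x))$, in which $\Omega$ locally becomes a half-space $\{s<0\}$ and $\vu$ is transversal to $\{s=0\}$ because $\vu_B\cdot\vc n\neq0$ there. On the inflow part one extends $\chi_i r$ across $\{s=0\}$ by $r_B\circ P_\Gamma$, which matches the trace $r|_{\Gamma^{\rm in}}=r_B$ prescribed in the definition of a weak solution, so that the extended function satisfies the equation across the flattened boundary without a singular source; on the outflow part one combines a reflection-type extension with a small inward shift $x\mapsto x-\eta\vc N(x)$, $\vc N\cdot\vc n>0$ on $\Gamma^{\rm out}$, so that the mollification takes place strictly inside the region where the equation holds, and one lets $\delta\to0$ and then $\eta\to0$. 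In the limit the normal-direction terms produce exactly the boundary integrals $\int_{\Gamma^{\rm out}}B(\mathfrak{r})\vu_B\cdot\vc n\varphi$ and $\int_{\Gamma^{\rm in}}B(\mathfrak{r}_B)\vu_B\cdot\vc n\varphi$ of (\ref{P3}), the weighted spaces $L^\gamma(\Gamma^{\rm out};|\vu_B\cdot\vc n|{\rm d}S_x)$ ensuring all traces are integrable.

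\textbf{Steps 2 and 3: transport and mixed equations (statements (ii) and (iii)).} For (ii) one repeats Step 1 with $\mathfrak{s}$ in place of $\mathfrak{r}$; since $\mathfrak{s}$ is bounded, all products lie in good spaces, $B$ need only be in class (\ref{Bs}) (approximating by $C^1([0,\infty)^N)$ functions and using that the transport equation propagates the sets $\{s^{(j)}=0\}$ to handle the behaviour at the origin), and the renormalized equation is the conservative form of $\partial_t B(\mathfrak{s})+\vu\cdot\nabla B(\mathfrak{s})=0$, i.e.\ it carries the extra term $+\varphi B(\mathfrak{s})\Div\vu$, yielding (\ref{P3s}). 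For (iii) set $w=rB(\mathfrak{s})$ with $r$ a component of $\mathfrak{r}$; regularize both factors and use
$$\partial_t\big(r_\delta(B(\mathfrak{s}))_\delta\big)+\Div\big(r_\delta(B(\mathfrak{s}))_\delta\,\vu\big)=(B(\mathfrak{s}))_\delta\,E^{(1)}_\delta+r_\delta\,E^{(2)}_\delta,$$
where $E^{(1)}_\delta\to0$ is the continuity-equation commutator for $r$ and $E^{(2)}_\delta\to0$ is the transport commutator for $B(\mathfrak{s})$; because $r\in L^2(Q_T)\cap L^\infty(I;L^\gamma(\Omega))$ and $B(\mathfrak{s})\in L^\infty$, the right-hand side tends to $0$ in $L^1_{\rm loc}$ (this is the one place where $r\in L^2(Q_T)$ is genuinely needed), and passing to the limit — treating the boundary pieces exactly as in Step 1, the trace of $B(\mathfrak{s})$ being legitimate by (ii) — gives the continuity equation (\ref{P3rs}) for $w$.

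\textbf{Time continuity and the main obstacle.} Taking $B$ equal to the identity in (\ref{P3})/(\ref{P3s})/(\ref{P3rs}) shows that $\mathfrak{r},\mathfrak{s},rB(\mathfrak{s})$ and their renormalizations belong to $C_{\rm weak}(\overline I;L^\gamma(\Omega))$ (resp.\ to $L^\infty$ in the transport case), since the right-hand sides are absolutely continuous in $\tau$ and the uniform bounds provide the equicontinuity. To upgrade to $C(\overline I;L^p(\Omega))$, $1\le p<\gamma$, one uses the classical argument of \cite{L4}, \cite{FNP}: apply the renormalization with a strictly convex $B$ of suitable growth, deduce that $\tau\mapsto\int_\Omega B(\mathfrak{r})(\tau)$ is continuous, combine this with weak continuity in $L^\gamma$ and weak lower semicontinuity of $\int_\Omega B(\cdot)$ to get strong $L^p$ continuity, and pass to the limit in the truncation. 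I expect the main obstacle to be the rigorous implementation of the boundary analysis in Step 1 on a merely Lipschitz domain: making the extension-and-inward-shift construction near $\Gamma^{\rm out}$ work with the flattening map of Lemma \ref{LDG} (whose regularity is only $C^{k-1}$), checking that the commutator still vanishes in these bent coordinates, and — crucially — verifying uniformly in the remaining parameters that the defect carried by the pieces (c) and (d) of the partition of unity vanishes as $\ep\to0$; this is exactly the point at which the admissibility hypotheses of Definition \ref{admb} and the tube estimates (\ref{Tubes}) are indispensable.
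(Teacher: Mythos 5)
Your overall architecture differs from the paper's in a genuine way: you flatten each smooth piece of $\Gamma^{\rm in/out}$ locally by $(P_\Gamma,d_\Gamma)$ and work with a partition of unity whose width $\ep$ is sent to $0$, whereas the paper performs a \emph{global} outward extension of the density via the characteristics of $\pm\vu_B$, producing an enlarged open set $\tilde\Omega=\Omega\cup\tilde{\cal V}^+$ on which $(\tilde{\mathfrak r},\vu)$ solves the continuity equation in ${\cal D}'$; the original boundary then becomes interior and the DiPerna--Lions mollification proceeds without further localization, the only residual cutoff being near the lower-dimensional set $\mathfrak{g}^{\rm bd}$ (handled by (\ref{Tubes}) and Hardy's inequality). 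The global-extension route buys you a clean application of the Friedrichs commutator lemma on $\tilde\Omega$ and avoids the $O(1/\ep)$ gradients of the partition of unity, which in your scheme produce $O(1)$ contributions that must telescope; the local-flattening route is closer in spirit to Boyer and Crippa--Donadello--Spinolo but requires more bookkeeping.

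There is, however, a concrete error in your Step~1. You extend $r$ across $\Gamma^{\rm in}$ by $\tilde r:=r_B\circ P_\Gamma$ and assert that ``the extended function satisfies the equation across the flattened boundary without a singular source.'' Matching the trace only removes the singular measure on $\Gamma^{\rm in}$; in the one-sided collar ${\cal V}^+$ your extension still carries the volume source
$$
g:=\partial_t\tilde r+\Div(\tilde r\,\vu_B)=\vu_B\cdot\nabla\!\big(r_B\circ P_\Gamma\big)+\big(r_B\circ P_\Gamma\big)\Div\vu_B,
$$
which is nonzero whenever $r_B$ is non-constant or $\Div\vu_B\neq0$, because $r_B\circ P_\Gamma$ is constant along \emph{normals}, not along the integral curves of $\vu_B$. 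The paper's extension (\ref{ecext}),
$$
\tilde{\mathfrak r}(t,\xi)=\mathfrak r_B(x_B)\,\exp\!\Big(\int_0^s\Div\vu_B(\mathfrak X(z;x_B))\,{\rm d}z\Big),\qquad \xi=\mathfrak X(s,x_B),
$$
where $\mathfrak X$ is the flow of $-\vu_B$, is designed precisely so that $\partial_t\tilde{\mathfrak r}+\Div(\tilde{\mathfrak r}\vu_B)=0$ holds identically in ${\cal V}^+$ (one checks $\frac{d}{ds}\tilde{\mathfrak r}(\mathfrak X(s,x_B))=\tilde{\mathfrak r}\,\Div\vu_B$, which is exactly $-\vu_B\cdot\nabla\tilde{\mathfrak r}=\tilde{\mathfrak r}\Div\vu_B$). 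Your argument as stated therefore does not justify (\ref{P3}): after renormalization the defect $B'(\tilde r)g$ survives the mollification limit $\delta\to0$ and enters the identity with a term $\int_0^\tau\!\int_{{\cal V}^+}\varphi\,B'(\tilde r)g$. One can try to save the argument by invoking $|{\cal V}^+|\aleq\ep$ and sending $\ep\to0$, but this interacts delicately with the $O(1/\ep)$ cutoff gradients of the partition of unity, and in any case you did not make this observation. A related issue occurs on the outflow side, where the trace $r|_{\Gamma^{\rm out}}$ is not prescribed and one cannot simply reflect: the paper approximates the (merely $L^\gamma$, $|\vu_B\cdot\vc n|{\rm d}S_x$-weighted) outflow trace by smooth functions $\mathfrak r^n$ and extends each by the forward flow (see (\ref{cvg1})--(\ref{extout+})); your ``reflection-type extension with a small inward shift'' glosses over how the correct boundary term emerges for a trace that is only an $L^\gamma$ function. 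These are the two points where your blind attempt misses the substance of the paper's construction.
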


\begin{rmk}\label{rinflow}
\begin{enumerate}
\item The condition (\ref{B}) on the renormalizing function $B$ in Lemma \ref{LP2} can be relaxed by using the
Lebesgue dominated convergence theorem: one can take, e.g., 
$B\in C([0,\infty)^M)$ $\cap$ $C^1((0,\infty)^M)$, $\mathfrak{r}\cdot\nabla_{{\mathfrak{r}}}B-B\in C[0,\infty)$ and 
$|B(\mathfrak{r})|\le c(1+r)^{q_1}$, $ |\mathfrak{r}\cdot\nabla_{{\mathfrak{r}}}B (\mathfrak{r})-B(\mathfrak{r})|\aleq c(1+|\mathfrak{r}|)^{q_2}$, $0\le q_1\le\frac 56\gamma$, $0\le q_2\le
\min\{1,\gamma/2\}$. 
In this case, { relation (\ref{P3}) remains} still valid and $B(\mathfrak{r})\in C_{\rm weak}(\overline I;L^{\gamma/q_1}(\Omega))$ $\cap$
 $C(\overline I;L^p(\Omega))$, $1\le p<\gamma/q_1$.
 \item Equation (\ref{P3})  implies
 $$
-\int_0^T\intO{B({\mathfrak{ r}})\eta\partial_t\varphi}{\rm d}t=<{\cal F},\varphi>,\;\forall \varphi \in C^1_c((0,T)\times\overline\Omega), \;\forall \eta \in C^1_c(\Omega).
$$
where 
{
$$
<{\cal F},\varphi>=
\intO{B(\mathfrak{r})\eta {\vu} \cdot \Grad \varphi
+B(\mathfrak{r})\varphi{\vu} \cdot \Grad \eta-\varphi\left( B'(\mathfrak{r}) \mathfrak{r} - B(\mathfrak{r}) \right) \Div{\vu} \Big)}  {\rm d} t 
$$
$$
- \int_0^\tau\int_{\Gamma^{\rm out}} B(\mathfrak{r}) 
{\vu}_B \cdot \vc{n} \varphi \ {\rm d}S_x {\rm d} t
- \int_0^\tau\int_{\Gamma^{\rm in}} B(\mathfrak{r}_B) 
{\vu}_B \cdot \vc{n} \varphi \ {\rm d}S_x {\rm d} t. 
$$
}
We verify by using the H\"older inequality and the trace theorem that
$$
\partial_t[B(\mathfrak{r})\eta]\in L^2(I;[W^{1,p}(\Omega)]^*)\,\mbox{with 
$\frac 1p+\frac q\gamma=\frac 56$, $q=\max\{q_1,q_2\}$}.
$$
\end{enumerate}
\end{rmk}
\noindent
{\bf Proof of Proposition \ref{LP2}}\\ 
We shall limit ourselves to show Item 1. (i.e. (\ref{P3})) in the "simple situation"$M=1$. To this end we set $\mathfrak{r}=r^{(1)}$, $\mathfrak{r}_0=r^{(1)}_0$, $\mathfrak{r}_B=r^{(1)}_B$ and consider $B\in C([0,\infty))$, $B'\in L^\infty(0,\infty)$. The proof is divided into seven steps. 
Extension from $M=1$ to $M>1$ is briefly described in {\it Step 8}.
The general case and the proof of Items 2.-3. follow the same strategy. 

{ In {\it Step 1} we construct convenient
outer neighborhoods ${\cal U}^+$ of $\Gamma^0$ and of each component
$\Gamma\subset\Gamma^{\rm out/in}$. In particular, the projection operator$P$ (cf. Lemma \ref{LDG}) must be sufficiently regular on the outer { neighborhoods} of  components of $\Gamma^{\rm out/in}$. In {\it Step 2}, we construct an { outer} { neighborhood} ${\cal V}^+\subset{\cal U}^+$ of any component $\Gamma\subset\Gamma^{\rm in}$ to which the density field can be extended via the characteristics of the vector field $-\vu_B$ in such a
way that the couple $(\mbox{density},\vu_B)$ satisfies the continuity equation on $(0,T)\times{\cal V}^+$--see (\ref{P2dod-}). In {\it Step 3},
we do the same for any component $\Gamma\subset\Gamma^{\rm out}$-- see
(\ref{extout}), and, in {\it Step 4.}, for $\Gamma^0$. The global extension is then defined in {\it Step 5.}; it satisfies the continuity
equation in the sense of distributions on a domain $\tilde\Omega$ created as union of $\Omega$ with $\Gamma$'s and their outer neighboghoods. It provides enough space to apply the DiPerna-Lions regularization procedure
(cf. \cite{DL}) in
{\it Step 6.}.  The result of Steps 1-6, is the renormalized continuity equation satisfied with test functions up to $\overline\Omega$ but with compact support in $I$. The extension to its time integrated form is
discussed in {\it Step 7}.
\\ \\
{\it Step 1: Construction of a particular outer neighborhoods of $\Gamma^{\rm in}$, $\Gamma^{\rm out}$ and $\Gamma^0$--cf. (\ref{e6a}), (\ref{e6b}) and Definition \ref{admb}.}
We denote, 
\begin{equation}\label{ball+}
B^+(\xi,\ep)= B(\xi,\ep)\cap \R^3\setminus\overline\Omega,\; B^-(\xi,\ep)= B(\xi,\ep)\cap \Omega,
\end{equation} 
\begin{enumerate}
 \item Let $\Gamma=\Gamma^0$. Then for any $\xi\in\Gamma$ there exists
 $\epsilon(\xi)$ such that $B^+(\xi,\epsilon)\cap\partial\Omega\subset\Gamma$. We then set 
 \begin{equation}\label{calU++}
  {\cal U}^+(\Gamma)=\cup_{\xi\in \Gamma}B^+(\xi;\epsilon).
 \end{equation}
\item Let $\Gamma$ be  any parametrized surface in the decomposition (\ref{Gin0}). 
We deduce that
\begin{equation}\label{ballep}
\forall\xi\in\Gamma,\;\exists\epsilon=\epsilon(\xi)\in (0,\ep),\; B(\xi,\epsilon){ \subset { T}}(\Gamma;\ep),
\end{equation}
where $T(\Gamma;\ep)$ is defined in Lemma \ref{LDG}. { We define open sets}
\begin{equation}\label{calU+}
{\cal U}={\cal U}_\ep(\Gamma):= \cup_{\xi\in \Gamma} B(\xi;\epsilon(\xi)),\quad
{\cal U}^\pm={\cal U}_\ep^\pm(\Gamma):=\cup_{\xi\in \Gamma} B^\pm (\xi;\epsilon(\xi)).
\end{equation}
Taking into account Lemma \ref{LDG}, we deduce that for any
$\ep>0$ sufficiently small, we have, in particular,
$$
{\cal U}={\cal U}^+\cup {\cal U}^-\cup \Gamma,\ {\cal U}^+\subset \R^3\setminus\overline\Omega,\ {\cal U}^-\subset\Omega,
$$
and
\begin{equation}\label{pd}
P_\Gamma\in { C^1({ \overline{\cal U}})},\; d_\Gamma\in { C^2 ( \overline{{ \cal U}^+})\cap  C^2 ( \overline{{ \cal U}^-}),\  \forall x\in{\cal U}^\pm,\ \Grad d_\Gamma(x)=\pm\vc n(P(x))
}
\end{equation}
\item
We realize that construction (\ref{calU++}), (\ref{calU+})
can be done in such a way that
\begin{equation}\label{UtildeU}
{\cal U}^+(\Gamma)\cap{\cal U}^+(\tilde\Gamma)=\emptyset,\; \mbox{for any couple $\Gamma\neq\tilde\Gamma$ in the decomposition (\ref{Gin0}) }
\end{equation}
$$
\mbox{or for $\Gamma$ in the decomposition (\ref{Gin0}) and $\tilde\Gamma=\Gamma^0$}.
$$
\end{enumerate}

The goal now is to extend the density and velocity fields $(\mathfrak{r},\vu)$ from $\Omega$ to an outer neighborhood of $\Gamma$ (which is a subset of ${\cal U}^+$ and which we will denote for a moment ${\cal V}^+(\Gamma)$) in such a way that the extended fields will satisfy the continuity equation 
in the sense of distributions on the open set  $\Omega\cup {{\cal V}^+(\Gamma)}\cup \Gamma)$. The construction will depend on the fact whether $\Gamma\subset\Gamma^{\rm in}$ or $\Gamma\subset \Gamma^{\rm out}$
or $\Gamma=\Gamma^0$. 
\\ \\
{\it Step 2: Extension of the density beyond the inflow boundary:}
\begin{enumerate}
 \item {\it ${\cal V}^ +$ in the case $\Gamma=\Gamma^{\rm in}$.}
\begin{enumerate}
\item {\it Flux of $-\vu_B$.} We denote by $\mathfrak{X}$ the flux of the vector field $-\vu_B$, i.e. solution of the following family of Cauchy problems  for ODE,
\begin{equation}\label{EDO}
\frac{\rm d}{{\rm d}s}\mathfrak{X}(s;x)=-\vu_B(\mathfrak{X}),\;\mathfrak{X}(0;x)=x,\; s\in \R,\;x\in \R^3.
\end{equation}
It is well known, cf. e.g. \cite[Chapter XI]{Demail}, that,
$$
\mathfrak{X}\in C^1(\R\times \R^3),\;
\mathfrak{X}(t,\cdot)\;\mbox{is $C^1$ diffeomorphism of $\R^3$ onto $\R^3$},
$$
in particular,
$$
\forall (s,x)\in \R\times \R^3,\;\mathfrak{X}(-s,\mathfrak{X}(s;x))=x,\;{\rm det}\Big[\Grad\mathfrak{X}(s
;x)\Big]=
{\rm exp}\Big(-\int_0^s\Div\vu_B(z,\mathfrak{X}(z,x)){\rm d}z\Big)>0.
$$
\item {\it Construction of ${\cal V}$, ${\cal V}^\pm$}.
\begin{enumerate}
\item 
Let $K\subset \Gamma$ be a compact set (with respect to the trace topology of $\R^3$ on $\partial\Omega$) where
$\Gamma\subset\Gamma^{\rm in}$ is any component in decomposition (\ref{Gin0}).
We want to prove that  $\forall\ep>0$, $\exists\delta=\delta_K>0,$
\begin{equation}\label{IN}
\forall (s,\xi)\in (-\delta,\delta)\times K,
\;
\mathfrak{X}(s,\xi)\subset {\cal U}_\ep(\Gamma)\;\mbox{and}\;
\forall (s,\xi)\in (0,\delta)\times K,
\;
\mathfrak{X}(s,\xi)\subset {\cal U}^+_\ep(\Gamma).
\end{equation}
Indeed:
\item By the uniform continuity of $\mathfrak{X}$ on compacts of $\R\times\R^3$ we easily get
$$
\exists \delta>0,\;\mathfrak{X}((-\delta,\delta);K)\subset {\cal U}_\ep(\Gamma).
$$
\item Moreover,
 due to (\ref{EDO}), 
$$
\forall x_B\in \Gamma^{\rm in},\;\exists\delta>0,\;\forall s\in (0,\delta),\;
(\mathfrak{X}(s,x_B)-P(\mathfrak{X}(0,x_B)))\cdot\vc n(x_B)>0.
$$  
By the uniform  continuity of $\vc n$ on 
compacts of $\Gamma$, $P$ on { compacts of} ${ T}(\Gamma)$ and
$\mathfrak{X}$ on compacts of $\R\times \R^3$, we deduce, in particular, that 
$$
\exists \delta>0,\;\forall (s,\xi)\in (0,\delta)\times K,\; \Big(\mathfrak{X}(s,\xi)-P(\mathfrak{X}(s,\xi))\Big)\cdot\vc n(P(
\mathfrak{X}(s,\xi))>0.
$$
{\it
This means that $\mathfrak{X}((0,\delta);K)\subset {\cal U}^+_\ep(\Gamma)$ which 
finishes the proof of (\ref{IN})}
\item Recalling Definition \ref{admb} and the definition of
the parametrized surface in Section \ref{DG}, we know that $\overline\Gamma= {\cal G}(\overline{\cal O})$ where
${\cal G}\in C^2(\overline{\cal O};\R^3)$ is a $C^2$-diffeomorhism of a domain ${\cal O}\in \R^2$ onto $\Gamma$. Let $L_n$ be an exhaustive sequence of compacts of
${\cal O}$,
\begin{equation}\label{Ln}
L_n\subset {\rm int}_{2}L_{n+1},\;\cup_{n=1}^\infty L_n={\cal O}
\end{equation}
so that $K_n:={\cal G}(L_n)$ is an exhaustive sequence of compacts in $\Gamma$ 
(one can take $L_{n}=\{x\in {\cal O}\,|\,{\rm dist}(x,R^2\setminus\overline{{\cal O}})\ge 1/n\}$).

We define  
\begin{equation}\label{calV+}
{\cal W}={\cal W}_\ep={\cal W}_\ep(\Gamma), \;
{\cal W}^+={\cal W}^+_\ep={\cal W}_\ep^+(\Gamma)=\cup_{n\in N}{\cal W}^+_n
\end{equation}
$$
{\cal V}={\cal V}_\ep={\cal V}_\ep(\Gamma)=\cup_{n\in N}{\cal V}_n,\;
{\cal V}^+={\cal V}^+_\ep={\cal V}_\ep^+(\Gamma)=\cup_{n\in N}{\cal V}^+_n ,
$$
where
$$
{\cal W}_n=(-\delta_{K_n},\delta_{K_n})\times{\rm int}_2L_n,\;
{\cal W}_n^+=(0,\delta_{K_n})\times{\rm int}_2L_n,
$$
$$
{\cal V}_n:=\mathfrak{X}((-\delta_{K_n},\delta_{K_n});{\cal G}({\rm int}_2L_n))\subset 
{\cal U}_\ep(\Gamma),\;
{\cal V}^+_n:=\mathfrak{X}((0,\delta_{K_n});{\cal G}({\rm int}_2L_n))\subset 
{\cal U}_\ep^+(\Gamma).
$$
\end{enumerate}
\item {\it Construction of a local diffeomorphism}
\begin{enumerate}
 \item
Now, we define a map,
\begin{equation}\label{diff}
\Phi: (-\infty,\infty)\times{\cal O}\ni (s,\zeta)\mapsto \mathfrak{X}(s;{\cal G}(\zeta))\in \R^3.
\end{equation}
{Clearly, $\Phi$ is contiunous and $\Phi(0,\cdot)$ is a bijection from $L_n$ to $K_n$.}
{We shall prove that $\Phi|_{{\cal W}_n}$
is a
$C^1$ local diffeomorphisms of ${\cal W}_n$ onto
${\cal V}_n$. Likewise,  $\Phi|_{{\cal W}_n^+}$
is a
$C^1$ local diffeomorphisms of ${\cal W}_n^+$ onto
${\cal V}_n^+$. In particular, ${\cal V}$, ${\cal V}^+$ and ${\cal V}^+\cup\Gamma\cup\Omega$
are open.}
\item Indeed,
in view of the theorem of local inversion, it is enough to show that
$$
\forall \zeta\in {\cal O},\; s\in R,\;
{\rm det}\Big[\partial_s\Phi,\nabla_\zeta\Phi\Big](s,\zeta)\neq 0.
$$ 
Seeing that, $\mathfrak{X}(s;\mathfrak{X}(-s;\xi))=\xi$, we infer
$$
\forall\xi\in \R^3,\; s\in \R, \quad
\partial_s\mathfrak{X}(s; \mathfrak{X}(-s;\xi))+\vu_B(\mathfrak{X}(-s;\xi))
\cdot\nabla_x\mathfrak{X}(s; \mathfrak{X}(-s;\xi))=0,
$$
i.e., equivalently,
$$
\partial_s\Phi(s,\zeta)= -\vu_B({\cal G}(\zeta))
\cdot\nabla_\xi\mathfrak{X}(s;{\cal G}(\zeta)),\;\mbox{in particular, for all $s>0$, $\zeta\in{\cal O}$,}
$$
we easily find that
$$
\Big[\partial_s\Phi,\partial_{\zeta_1}\Phi, \partial_{\zeta_2}\Phi\Big](s,\zeta)= -
\begin{bmatrix}
[\vu_B({\cal G}(\zeta))]^T\\
[\partial_{\zeta_1}{\cal G}(\zeta)]^T\\
[\partial_{\zeta_1}{\cal G}(\zeta)]^T\\
\end{bmatrix} 
\;
\begin{bmatrix}
\Grad\mathfrak{X}_1(s;{\cal G}(\zeta)), \Grad\mathfrak{X}_2(s;{\cal G}(\zeta)), \Grad\mathfrak{X}_3(s;{\cal G}(\zeta))
\end{bmatrix}, 
$$
where vectors and $\Grad$ are columns.
Whence,
$$
{\rm det}\Big[\partial_s\Phi,\nabla_\zeta\Phi\Big](s,\zeta)=
-\vu_B\cdot\vc n({\cal G}(\zeta))
{\rm exp}\Big(-\int_0^s\Div\vu_B(z,\mathfrak{X}(z;{\cal G}(\zeta))){\rm d}z\Big) >0
$$
for all $\zeta\in {\cal O}$ and $s\in R$.
\end{enumerate}
\item{\it A diffeomorphism induced by $\Phi$.}
Finally, we observe (employing the uniform continuity and the fact that $\Phi$ is a local diffeomorphism), that $\delta_{K_n}$ can be chosen so small that
$$
\forall s_1,s_2\in (-\delta_{K_n},\delta_{K_n}),\ \Phi(s_1,L_n)\cap\Phi(s_2, L_n)=\emptyset.
$$
Consequently, in particular,  
\begin{equation}\label{diff+}
\forall \xi \in {\cal V}^+,\;\exists ! (s,x_B)\in {\mathfrak{W}}^+\; \mbox{such that}\
\xi=\mathfrak{X}(s,x_B),
\end{equation}
where $\mathfrak{W}^+=\cup_{n\in N}(0,\delta_{K_n})\times {\cal G}({\cal O}_n)$. 
\end{enumerate}
\item {\it Extension of the density beyond the inflow boundary.}
We may therefore extend the boundary data to 
${\cal V}^+$ by setting 
\begin{equation}\label{ecext}
\tilde{\mathfrak{r}} (t,\xi) = \mathfrak{r}_B({x}_B){\rm exp}\Big(\int_0^s{\rm div}\vu_B(\mathfrak{ X}(z;{x}_B)){\rm d}z\Big),\ \mbox{where $\xi=\mathfrak{X}(s,x_B)$}.
\end{equation}
Clearly,  $\tilde {\mathfrak{r}}\in C^{1}(\overline I\times\overline{{\cal V^+}})$ 
and
\begin{equation}\label{P2dod-}
\partial_t\tilde{\mathfrak{r}}+\Div (\tilde{\mathfrak{r}}\vu_B) = 0 \ \mbox{in}\ (0,T)\times {\cal V}^+,
\end{equation}
and
\begin{equation} \label{P2dod}
 \partial_tB(\tilde{\mathfrak{r}})+
 {\rm div}(B(\tilde{\mathfrak{r}})\vu_B)+[\tilde{{\mathfrak{r}}}
 B'(\tilde{\mathfrak{r}})-B(\tilde{\mathfrak{r}})]{\rm div}\vu_B=0\ \mbox{in
 $(0,T)\times\mathcal{V}^+$.}
 \end{equation}
\end{enumerate}

\noindent
{\it Step 3: Extension of the density beyond the outflow boundary}
\begin{enumerate}
 \item { We construct the open set $\mathcal{V}$, $\mathcal{V}^\pm$ 
 as in the Step 3.2 using the flow determined by EDO (\ref{EDO}),
 where we replace $-\vu_B$ by $\vu_B$. We denote this flow again by
 $\mathfrak{X}$ and the corresponding diffeomorphisme  by $\Phi$.} In particular, 
 for any $\xi\in\mathcal{V}^+$ there exists unique $(s,x_B)\in \mathfrak{W}^+$
 such that $\mathfrak{X}(s,x_B)=\xi$.
 \item We take a sequence 
 \begin{equation}\label{cvg1}
 C_c^1(\Gamma)\ni \mathfrak{r}^n\to \mathfrak{r}\ \mbox{in $L^\gamma(I;L^\gamma(\Gamma;|\vu_B\cdot\vc n|{\rm d}S_x))$}.
 \end{equation}
 \item We extend $\mathfrak{r}^n$ from $\Gamma$ to ${\cal V}^+$: $\forall (t,\xi)\in [0,T]\times\mathcal{V}^+$,
 \begin{equation}\label{extout}
 \tilde{\mathfrak{r}^n}(t,\xi)=\left\{
 \begin{array}{c}
  {\mathfrak{r}}^n(t-s,x_B){\rm exp}\Big(-\int_0^s{\rm div}\vu_B(\mathfrak{X}(\zeta,x_B)){\rm d}\zeta\Big),\; \xi=\mathfrak{X}(s,x_B)\;\mbox{if $t\ge s$}\\
  {\mathfrak{r}^n}(0,x_B){\rm exp}\Big(-\int_0^s{\rm div}\vu_B(\mathfrak{X}(\zeta,x_B)){\rm d}\zeta\Big),\; \xi=\mathfrak{X}(s,x_B)\;\mbox{if $0\le t<s$}
 \end{array}
 \right\}
 \end{equation}
 and easily verify that
 that the couple $(\tilde{{\mathfrak{r}}^n},\vu_B)$ verifies equations (\ref{P2dod-})--(\ref{P2dod}).
 \item 
 Seeing (\ref{cvg1}) an using change of variables $\xi \to \Phi(s,x_B)$ we infer
 \begin{equation}\label{conv2}
  \tilde{\mathfrak{r}^n}\to\tilde{\mathfrak{r}}\ \mbox{in
  $L^\gamma(I;L^\gamma({\cal V}^+_{\mathfrak{h}}))$},\ {\cal V}^+_{\mathfrak{ h}}=({\cal V}^+\cup\Gamma)\setminus B(\partial\Gamma;\mathfrak{h})
 \end{equation}
 with any $0<\mathfrak{h}$ sufficiently small. Consequently, we deduce from
 (\ref{extout}),
 \begin{equation}\label{extout-}
  \int_0^T\int_{{\cal V}^+}{\Big(\tilde{\mathfrak{r}}\partial_t\varphi+\tilde{\mathfrak{r}}\vu\cdot\Grad\vu_B\Big)}- \int_0^T\int_{\Gamma}
  \tilde{\mathfrak{r}}\cdot\vc n\varphi{\rm d}S_x{\rm d}t=0,\ \varphi\in C^1_c((0,T)\times({\cal V}^+\cup\Gamma))
 \end{equation}
 and
 \begin{equation}\label{extout+}
  \int_0^T\int_{{\cal V}^+}\Big(B(\tilde{\mathfrak{r}})\partial_t\varphi
  + B(\tilde{\mathfrak{r}})\cdot\vu_B\cdot\Grad\varphi -[
  \tilde{\mathfrak{r}} B'(\tilde{\mathfrak{r}})-B(\mathfrak{r})]{\rm div}\vu\Big){\rm d}x{\rm d}t
  \end{equation}
  $$
  -\int_0^T\int_{\Gamma}
  B(\tilde{\mathfrak{r}})\cdot\vc n\varphi{\rm d}S_x{\rm d}t=0,\ \varphi\in C^1_c((0,T)\times({\cal V}^+\cup\Gamma)),
 $$
 where, here, $\vc n$ is the outer normal to $\mathcal{V}^+$ (at $\Gamma$).
 \end{enumerate}
 {\it Step 4: Extension of the density field beyond the slip boundary.}\\
Let now $\Gamma =\Gamma^0$.  We take ${\cal V}^+={\cal V}_\ep^+(\Gamma)={\cal U}^+(\Gamma)$, cf. (\ref{calU++}), and we set in this case simply
\begin{equation}\label{0ext}
{ \tilde{\mathfrak{r}}}(t, x)=0,\;(t,x)\in (0,T)\times {\cal V}^+.
\end{equation}
Clearly, equations (\ref{P2dod-}) and (\ref{P2dod}) hold in this case.
\\ \\
 \noindent
{\it Step 5: Continuity equation extended.} 
\\
Referring to the decomposition (\ref{Gin0}), we construct ${\cal V}^+(\Gamma^{\rm in}_{{\mathfrak{k}}_{\rm in}})$ according
to (\ref{calV+}), ${\cal V}^+(\Gamma^{\rm out}_{{\mathfrak{k}}_{\rm out}})$ according to Item 1 in Step 3, and  ${\cal V}^+(\Gamma^0)$ according to (\ref{calU++}), cf. (\ref{0ext}). These open sets are mutually disjoint by virtue of 
(\ref{UtildeU}). Finally, we set,
\begin{equation}\label{Vglobal}
\tilde{\cal V}^+:=\Big[\cup_{{\mathfrak{k}}_{\rm in}}^{\overline{{\mathfrak{k}}_{\rm in}}}
\Big({\cal V}^+(\Gamma^{\rm in}_{{\mathfrak{k}}_{\rm in}})
\cup \Gamma^{\rm in}_{{\mathfrak{k}}_{\rm in}}\Big)\Big]
\cup
\Big[\cup_{{\mathfrak{k}}_{\rm in}}^{\overline{{\mathfrak{k}}_{\rm in}}}
\Big({\cal V}^+(\Gamma^{\rm out}_{{\mathfrak{k}}_{\rm in}})
\cup \Gamma^{\rm out}_{{\mathfrak{k}}_{\rm in}}\Big)\Big]
\cup \Big[
{\cal V}^+(\Gamma^0)\cup \Gamma^0\Big],\quad \tilde\Omega=\tilde{\cal V}^+\cup\Omega
\end{equation}
and extend $[\mathfrak{r},\vu]$ from $(0,T)\times\Omega$ to $(0,T)\times\tilde\Omega$ as follows
\begin{equation}\label{newru}
(\mathfrak{r},\vu)(t,x)=
\left\{
\begin{array}{c}
(\mathfrak{ r},\vu)(t,x)\;\mbox{if $(t,x)\in (0,T)\times\Omega$},\\
(\tilde{\mathfrak{r}}(t,x),\vu_B(x))\;\mbox{if $(t,x)\in (0,T)\times\tilde{\cal V}^+$}
\end{array}
\right\},
\end{equation} 
where $\tilde{\mathfrak{r}}$ in $\tilde{\cal V}^+$ is defined through (\ref{ecext}) or (\ref{extout})--(\ref{conv2}) or (\ref{0ext}), according to the case.

By virtue of (\ref{cow})$_{v=0}$ and (\ref{P2dod-}), (\ref{0ext}), (\ref{extout-})
we easily deduce, that the new couple $[\mathfrak{r}, \vu]$ satisfies continuity equation (\ref{co1})$_{v=0}$ in the sense of distributions
on ${\cal D}((0,T)\times\tilde\Omega)$.
\\ \\
{\it Step 6: Application of the DiPerna-Lions regularization, proof of equation (\ref{P3}).}
Next, we use the regularization procedure due to DiPerna and Lions \cite{DL} applying convolution with a family of regularizing kernels
obtaining for the regularized function $[\mathfrak{r}]_{\mathfrak{e}}$,
\begin{equation} \label{P4}
\partial_t[\mathfrak{r}]_{\mathfrak{e}}+\Div ([\mathfrak{r}]_{\mathfrak{e}} \vu ) = R_{\mathfrak{e}} \ \mbox{a.e. in} \ (0,T)\times \tilde\Omega_{{\mathfrak{e}}},
\end{equation}
where
$$
\tilde\Omega_{{\mathfrak{e}}} = \left\{ x \in \tilde\Omega\ \Big| \ {\rm dist}(x, \partial \tilde\Omega ) > {\mathfrak{e}} \right\},
R_{\mathfrak{e}}:=\Div ([\mathfrak{r}]_{\mathfrak{e}} \vu )-\Div ([\mathfrak{r} \vu]_{\mathfrak{e}} ) \to 0 \ \mbox{in} \ L_{\rm loc}^1((0,T)\times\tilde\Omega) \ \mbox{as}\ {\mathfrak{e}} \to 0.
$$
The convergence of $R_{\mathfrak{e}}$ evoked above results from the application of the refined version of the Friedrichs lemma on commutators, see e.g. \cite{DL}
or \cite[Lemma 10.12 and Corollary 10.3]{FeNoB}.

Multiplying equation (\ref{P4}) on $B'([\mathfrak{r}]_{\mathfrak{e}})$, we get
\begin{equation}\label{P4*}
\partial_tB([\mathfrak{r}]_{\mathfrak{e}})+ \Div (B([\mathfrak{r}]_{\mathfrak{e}}) \vu ) + \left( B'([\mathfrak{r}]_{\mathfrak{e}}) [\mathfrak{r}]_{\mathfrak{e}} - B([\mathfrak{r}]_{\mathfrak{e}} ) \right) \Div \vu = B'([\mathfrak{r}]_{\mathfrak{e}}) R_{\mathfrak{e}}
\end{equation}
or
\[
\int_0^T\int_{\tilde\Omega} \Big(B([\mathfrak{r}]_{\mathfrak{e}})\partial_t\varphi+B([\mathfrak{r}]_{\mathfrak{e}}) \vu \cdot \Grad \varphi-\varphi \left( B'([\mathfrak{r}]_{\mathfrak{e}} ) [\mathfrak{r}]_{\mathfrak{e}} - B([\mathfrak{r}]_{\mathfrak{e}}) \right) \Div \vu \Big)\ \dx{\rm d}t
$$
$$
 - \int_0^T
\int_{\tilde\Omega}{ \varphi B'([\mathfrak{r}]_{\mathfrak{e}}) R_{\mathfrak{e}} }{\rm d} x{\ d }t=0
\]
for any $\varphi \in C^1_c (I\times\tilde\Omega)$, $0<{\mathfrak{e}}< {\rm dist}({\rm supp}(\varphi),\partial\tilde\Omega)$. 
Since the last term at the right hand side converges to $0$ by virtue of the Friedrichs commutator lemma (cf. Lemma \ref{Friedrichs}), letting ${\mathfrak{e}} \to 0$, we get
\begin{equation}\label{dod1**}
\int_0^T\int_{\Omega} \Big(B(\mathfrak{r})\partial_t\varphi+B(\mathfrak{r}) \vu \cdot \Grad \varphi
-\varphi \left( B'(\mathfrak{r} ) \mathfrak{r} - B(\mathfrak{r}) \right) \Div \vu \Big)\ \dx{\rm d}t
\end{equation}
$$
+
\int_0^T\int_{\tilde{\cal V}^+} \Big(B(\mathfrak{r})\partial_t\varphi+B(\mathfrak{r}) \vu \cdot \Grad \varphi
-\varphi \left( B'(\mathfrak{r} ) \mathfrak{r} - B(\mathfrak{r}) \right) \Div \vu \Big)\ \dx{\rm d}t=0
$$
for any $\varphi \in C^1_c (I\times\tilde\Omega)$,
where, by virtue of (\ref{P2dod}), (\ref{extout+}), (\ref{0ext}),
$$
\int_0^T\int_{\tilde{\cal V}^+} \Big(B(\mathfrak{r})\partial_t\varphi+B(\mathfrak{r}) \vu \cdot \Grad \varphi
-\varphi \left( B'(\mathfrak{r} ) \mathfrak{r} - B(\mathfrak{r}) \right) \Div \vu \Big)\ \dx{\rm d}t
$$
$$
=-\int_0^T\int_{\partial\Omega}B(\mathfrak{r})\vu_B\cdot \vc n\varphi{\rm d}S_x{\rm d}t,\ \mathfrak{r}=\mathfrak{r}_B\;\mbox{on $\Gamma^{\rm in}$}.
$$

Using in (\ref{dod1**}) test function 
$$
\varphi(t,x)=\phi(1-\chi_{\mathfrak{h}}),\ \phi\in C^1_c(I\times\overline\Omega),
\ \chi_{\mathfrak{h}}(x)=\chi\Big(\frac {d_{{\mathfrak{g}}^{\rm bd}}(x)}{\mathfrak{h}}\Big)\Big),
$$
where
$$
\chi\in C^1[0,\infty),\;|\chi'(x)|\le 3,\;
\chi(x)
\left\{\begin{array}{c}
\in [0,1]\\
=1\;\mbox{if $x\in [0,1/2]$}\\
=0\;\mbox{if $x>1$}\\
\end{array}
\right\},
$$
and $\mathfrak{h}$ is a positive sufficiently small number,
we get
$$
\int_0^T\int_{\Omega} \Big(B(\mathfrak{r})\partial_t\phi(1-\chi_{\mathfrak h})+B(\mathfrak{r}) \vu \cdot \Grad \phi (1-\chi_{\mathfrak{h}})
-\phi (1-\chi_{\mathfrak{h}}) \left( B'(\mathfrak{r} ) \mathfrak{r} - B(\mathfrak{r}) \right) \Div \vu \Big)\chi_{\mathfrak{h}}\ \dx{\rm d}t
$$
\begin{equation}\label{diff++}
+\int_0^T\int_{\Gamma^{\rm out}}B(\mathfrak{r})\vu_B\cdot \vc n\varphi\chi_{\mathfrak{h}}{\rm d}S_x{\rm d}t  +\int_0^T\int_{\Gamma^{\rm in}}B(\mathfrak{r})\vu_B\cdot \vc n\phi\chi_{\mathfrak{h}}{\rm d}S_x{\rm d}t 
\end{equation}
$$
-\int_0^T\int_{\Omega\cap B({\mathfrak{g}}^{\rm bd};\mathfrak{h})}
\phi B(\mathfrak{r})\vu\cdot\Grad\chi_{\mathfrak{h}}\dx\dt=0,
$$ 
where the last term at the left hand side can be written as
$$
\int_0^T\int_{\Omega\cap B({\mathfrak{g}}^{\rm bd};\mathfrak{h})}
\phi B(\mathfrak{r})(\vu-\vu_B)\cdot\Grad\chi_{\mathfrak{h}}\dx\dt
+ \int_0^T\int_{\Omega\cap B({\mathfrak{g}}^{\rm bd};\mathfrak{h})}
\phi B(\mathfrak{r})\vu_B\cdot\Grad\chi_{\mathfrak{h}}\dx\dt.
$$ 
The first term in the latter expression tends to $0$ as $\mathfrak{h}\to 0$ by virtue of the
Hardy inequlity and the second one tends to $0$ due to the H\"older inequality and (\ref{ruB}), cf. (\ref{dLipschitz}) and (\ref{Tubes}). 

Thus letting $\mathfrak{h}\to 0$ we obtain the desired result,
namely
$$
\int_0^T\int_{\Omega} \Big(B(\mathfrak{r})\partial_t\phi+B(\mathfrak{r}) \vu \cdot \Grad \phi
-\phi \left( B'(\mathfrak{r} ) \mathfrak{r} - B(\mathfrak{r}) \right) \Div \vu \Big)\ \dx{\rm d}t
$$
$$
-\int_0^T\int_{\Gamma^{\rm out}}B(\mathfrak{r})\vu_B\cdot \vc n\phi{\rm d}S_x{\rm d}t  -\int_0^T\int_{\Gamma^{\rm in}}B(\mathfrak{r})\vu_B\cdot \vc n\phi{\rm d}S_x{\rm d}t=0,\ \phi\in C^1_c(I\times\overline\Omega). 
$$
{\it Step 7: Time integrated renormalized continuity equation}\\
Since the couple $(\mathfrak{r},\vu)$ satisfies (\ref{cow})$_{v=0}$, and since
$\mathfrak{r}\in L^\infty(I; L^\gamma(\Omega))$,
it is standard to see that ${\mathfrak{r}}\in C_{\rm weak}(\overline I;L^\gamma(\Omega))$. Moreover, since the renormalized
equation holds in the sense of distributions, one can infer that ${\mathfrak{r}}\in C(\overline I;L^1(\Omega))$
and  the renormalized time integrated equation (\ref{P3}) holds.
This can be deduced from Di-Perna, Lions \cite{DL}, see e.g. \cite[Theorems 3, 5]{AN-MP+} for more details.
\\ \\
{\it Step 8: The case $M>1$:}\\
In this case, we obtain instead of (\ref{P4}) $M$ identities,
$$
\partial_t[\mathfrak{r}^i]_{\mathfrak{e}})
+ \Div ([{r}^i]_{\mathfrak{e}}) \vu ) =R^i_{\mathfrak{e}}:=
{\rm div}([r^i]_{\mathfrak{e}}\vu)- {\rm div}[r^i\vu]_{\mathfrak{e}},\;
i=1,\ldots, M
$$
We obtain the required result by multiplying $i$-th equation by
$\partial_i B([\mathfrak{r}^1]_\ep,\ldots,[\mathfrak{r}^M]_\ep)$,
summing the resulting equations and then proceeding
in the same way as in Step 6.
\\ \\
This finishes the proof of Proposition \ref{LP2}.
\\ \\



{
Proposition \ref{LP2} gives rise to several useful corollaries. Before stating them  we define 
\begin{equation}\label{renofrac+}
\forall \tau\in \overline I,\ \mbox{for a.a. $x\in \Omega$},\quad
\mathfrak{s}_{\mathfrak{d}}(\tau,x)= [Z/_{\mathfrak{d}}R](\tau,x):=
\left\{\begin{array}{c}
\frac {Z(\tau,x)}{R(\tau,x}\;\mbox{if $R(\tau,x)\neq 0$},\\
\mathfrak{d}\;\mbox{if $R(\tau,x)= 0$}
\end{array}\right\},\ \mathfrak{d}\in \R,
\end{equation}
$$
\mbox{for a.a. $(\tau,x)\in I\times\partial\Omega$} ,\quad
\mathfrak{s}_{\mathfrak{d}}(\tau,x)= [Z/_{\mathfrak{d}}R](\tau,x):=
\left\{\begin{array}{c}
\frac {Z(\tau,x)}{R(\tau,x}\;\mbox{if $R(\tau,x)\neq 0$},\\
\mathfrak{d}\;\mbox{if $R(\tau,x)= 0$}
\end{array}\right\}.
$$

The first of the corollaries is the following:

\begin{cor}{\rm [From continuity to pure transport equation]}\label{cont-tr}
Let $\gamma$, $\Omega$, $\vu_B$ and $\vu$ be the same as in Proposition
\ref{LP2}. Suppose that
$$
0\le Z\le\overline a R,\; R\in L^2(I,L^2(\Omega))\cap L^\infty(I;L^\gamma(\Omega))\cap L^\gamma(I; L^\gamma(\Gamma^{\rm out};
|\vu_B\cdot\vc n|{\rm d} S_x))
$$
and that both $Z$ and $R$ are weak solutions of the continuity equation (\ref{co1}) with initial conditions
$$
0\le Z_0\le\overline a R_0,\; R_0\in L^\gamma(\Omega)
$$
and boundary conditions
$$
0\le Z_B\le \overline a R_B, \; R_B\in C(\overline\Omega).
$$
Then we have: The quantities
$$
Z,R\in C_{\rm weak}(\overline I;L^\gamma(\Omega))\cap C(\overline I;L^p(\Omega)),\;1\le p<\gamma,
$$
and for any $\mathfrak{d}\in R$ the quantity $\mathfrak{s}_{\mathfrak{d}}:=Z/_{\mathfrak{d}} R$ (cf. (\ref{renofrac+}) 
 belongs to $C(\overline I;L^p(\Omega))\cap L^\infty(I,L^\infty(\Gamma^{\rm out})) $, and it satisfies
the pure transport equation
$$
\intO{{\mathfrak{s}}_{\mathfrak{d}}\varphi(\tau)} - \intO{{\mathfrak{s}}_{0,\mathfrak{d}}\varphi(0)}+\int_0^\tau\int_{\Gamma^{\rm out}}
{\mathfrak{s}}_{\mathfrak{d}}\vu_B\cdot\vc n{\rm d}S_x{\rm d}t +\int_0^\tau\int_{\Gamma^{\rm in}}
{\mathfrak{s}}_{B,\mathfrak{d}}\vu_B\cdot\vc n{\rm d}S_x{\rm d}t 
$$
\begin{equation}\label{trs}
=\int_0^\tau\intO{\Big({\mathfrak{s}}_{\mathfrak{d}}\partial_t\varphi +
{\mathfrak{s}}_{\mathfrak{d}}\vu\cdot\Grad\varphi + \varphi{\mathfrak{s}}_{\mathfrak{d}}
\Div\vu\Big)}{\rm d}t
 \end{equation}
with any $\tau\in\overline I $ and any $\varphi\in C^1_c([0,T]\times \overline\Omega).$
In the above, $\mathfrak{s}_{0,\mathfrak{d}}=Z_0/_\mathfrak{d} R_0$, $\mathfrak{s}_{B,\mathfrak{d}}=Z_B/_\mathfrak{d} R_B$.
\end{cor}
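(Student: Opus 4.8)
The plan is to obtain the pure transport equation for $\mathfrak{s}_{\mathfrak{d}}=Z/_{\mathfrak{d}}R$ by applying Item 3 of Proposition \ref{LP2} (equation (\ref{P3rs})) with $r:=R$, with $\mathfrak{s}:=\mathfrak{s}_{\mathfrak{d}}$ playing the role of a solution of the \emph{pure transport equation}, and with $B$ chosen so that $R\,B(\mathfrak{s})$ approximates $Z$; but since the structural roles of $Z$ and $R$ here are both solutions of the \emph{continuity} equation, the first and main task is to establish that $\mathfrak{s}_{\mathfrak{d}}$ is itself a bounded weak solution of the pure transport equation driven by $\vu$. First I would record that, by Item 1 of Proposition \ref{LP2} (applied to the vector $\mathfrak{r}=(R,Z)$, which is legitimate since $0\le Z\le\overline a R$ forces $Z\in L^\infty(I;L^\gamma(\Omega))\cap L^\gamma(I;L^\gamma(\Gamma^{\rm out};|\vu_B\cdot\vc n|{\rm d}S_x))$ as well), both $R$ and $Z$ lie in $C_{\rm weak}(\overline I;L^\gamma(\Omega))\cap C(\overline I;L^p(\Omega))$ for $1\le p<\gamma$, and the renormalized identity (\ref{P3}) holds for $B\in C([0,\infty)^2)$ with bounded gradient.

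Next I would approximate the (non-smooth) map $(R,Z)\mapsto Z/_{\mathfrak{d}}R$ by renormalizing functions of two variables. For $\eta>0$ set $B_\eta(R,Z)=\frac{Z}{R+\eta}$ clipped to the interval $[0,\overline a]$; then $B_\eta\in C([0,\infty)^2)$ with $\nabla B_\eta$ bounded (the bound depends on $\eta$, but that is irrelevant since $\eta$ is frozen at this stage), so (\ref{P3}) applies to each $B_\eta$. One computes
\[
B_\eta'(\mathfrak{r})\cdot\mathfrak{r}-B_\eta(\mathfrak{r})
=\Big(\partial_R B_\eta\,R+\partial_Z B_\eta\,Z\Big)-B_\eta
=-\frac{RZ}{(R+\eta)^2}+\frac{Z}{R+\eta}-\frac{Z}{R+\eta}
=-\frac{\eta Z}{(R+\eta)^2}
\]
on the region where no clipping occurs, so the ``defect'' term $-(B_\eta'\mathfrak{r}-B_\eta)\,{\rm div}\,\vu=\frac{\eta Z}{(R+\eta)^2}\,{\rm div}\,\vu$ is bounded by $\overline a\,|{\rm div}\,\vu|$ in the clipped region and tends to $\frac{\eta Z}{(R+\eta)^2}\,{\rm div}\,\vu$ pointwise; crucially, since $Z\le\overline a R$, one has $\frac{\eta Z}{(R+\eta)^2}\le \frac{\overline a\,\eta R}{(R+\eta)^2}\le\overline a$ uniformly, and this quantity $\to 0$ a.e.\ as $\eta\to 0$ (it is $\overline a$ at $R=\eta$ but the set where $R$ is of order $\eta$ shrinks). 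By dominated convergence in $L^1(Q_\tau)$, and using $B_\eta(R,Z)\to \mathfrak{s}_{0}:=Z/_{0}R$ a.e.\ together with the clip bound $|B_\eta|\le\overline a$ (hence uniform integrability against any fixed test function and against the boundary measures $|\vu_B\cdot\vc n|{\rm d}S_x$, using the just-established boundary traces of $R,Z$), I would pass to the limit $\eta\to0$ in (\ref{P3}) written for $B_\eta$ and obtain exactly (\ref{trs}) with $\mathfrak{d}=0$. The continuity of the boundary trace passage uses that $R,Z\in L^\gamma(I;L^\gamma(\Gamma^{\rm out};|\vu_B\cdot\vc n|{\rm d}S_x))$, so $\mathfrak{s}_0\in L^\infty$ there; the identification of $\mathfrak{s}_{\mathfrak{d}}$ for general $\mathfrak{d}\in\R$ with $\mathfrak{s}_0$ is immediate, since they differ only on the (Lebesgue-null in $x$ for a.a.\ $\tau$, by the continuity equation and $R_0>0$ is not assumed here — but $\{R(\tau,\cdot)=0\}$ contributes nothing to any of the integrals in (\ref{trs}) as the integrands all carry a factor that vanishes there) set $\{R=0\}$, and likewise on the boundary.

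Finally I would upgrade the regularity statement. Once (\ref{trs}) is known, $\mathfrak{s}_{\mathfrak{d}}\in L^\infty(Q_T)$ satisfies the pure transport equation with $\vu$ in class (\ref{tvu}) and bounded initial/boundary data, so Item 2 of Proposition \ref{LP2} applies with $N=1$: it yields $\mathfrak{s}_{\mathfrak{d}}\in C([0,T];L^p(\Omega))$ for all $1\le p<\infty$, and also furnishes the renormalized version (\ref{P3s}) should it be needed downstream. For the boundary regularity $\mathfrak{s}_{\mathfrak{d}}\in L^\infty(I;L^\infty(\Gamma^{\rm out}))$ I would invoke the trace of $Z,R$ on $\Gamma^{\rm out}$ provided by Proposition \ref{LP2} together with the a.e.\ bound $0\le Z\le\overline a R$, which propagates to the traces. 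I expect the \textbf{main obstacle} to be the limit passage $\eta\to0$ in the defect term: one must argue that, although $\frac{\eta Z}{(R+\eta)^2}$ is not small uniformly (it equals $\overline a$ when $R\approx\eta$), it is dominated by the fixed integrable function $\overline a\,|{\rm div}\,\vu|\in L^1(Q_T)$ and converges to $0$ pointwise a.e.\ on $\{R>0\}$ while $\{R=0\}$ is irrelevant — the only delicate point being that $R\in C(\overline I;L^p)$ does not by itself control the \emph{measure} of $\{0<R<\eta\}$ uniformly in time, so the argument really is the pointwise-plus-domination one rather than a quantitative estimate; handling the analogous boundary term on $\Gamma^{\rm out}$ requires in addition the $L^\gamma$ boundary integrability of $R$ together with $Z\le\overline a R$.
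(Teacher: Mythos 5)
Your overall strategy coincides with the paper's: apply the renormalized continuity identity (\ref{P3}) from Proposition \ref{LP2} with $M=2$, $\mathfrak{r}=(R,Z)$, and the regularized quotient $B_\eta(R,Z)=Z/(R+\eta)$, then pass to the limit $\eta\to 0$ by dominated convergence (the paper writes $\mathfrak{a}$ where you write $\eta$, but the idea is identical). There is however an arithmetic slip in the defect computation that, taken at face value, produces the wrong limiting equation. The last two terms of
\[
\partial_R B_\eta\,R+\partial_Z B_\eta\,Z-B_\eta=-\frac{RZ}{(R+\eta)^2}+\frac{Z}{R+\eta}-\frac{Z}{R+\eta}
\]
cancel, so the result is $-\frac{RZ}{(R+\eta)^2}$, not $-\frac{\eta Z}{(R+\eta)^2}$. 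The distinction is essential: the quantity you wrote tends to $0$ a.e.\ as $\eta\to 0$, which would make the whole defect term $-(B'_\eta\mathfrak{r}-B_\eta)\,{\rm div}\,\vu$ disappear in the limit; the resulting identity would then lack the $+\varphi\,\mathfrak{s}_{\mathfrak{d}}\,{\rm div}\,\vu$ contribution on the right-hand side of (\ref{trs}), i.e.\ you would have derived a \emph{continuity} equation for $\mathfrak{s}_{\mathfrak{d}}$ rather than the claimed pure transport equation. The correct expression $\frac{RZ}{(R+\eta)^2}$ converges to $Z/R=\mathfrak{s}_0$ on $\{R>0\}$ and to $0$ on $\{R=0\}$ (where $Z\le\overline a R$ forces $Z=0$), so $-(B'_\eta\mathfrak{r}-B_\eta)\,{\rm div}\,\vu\to\mathfrak{s}_0\,{\rm div}\,\vu$, exactly as needed; and your dominated-convergence argument transfers verbatim because $\frac{RZ}{(R+\eta)^2}\le\frac{Z}{R+\eta}\le\overline a$.

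A secondary caveat: the claim that passing from $\mathfrak{d}=0$ to general $\mathfrak{d}$ is ``immediate'' because $\{R=0\}$ ``contributes nothing to any of the integrals'' is not justified. On $\{R(\tau,\cdot)=0\}$ the integrand $\mathfrak{s}_{\mathfrak{d}}\partial_t\varphi$ equals $\mathfrak{d}\,\partial_t\varphi$, which does not vanish; the statement for general $\mathfrak{d}$ would need $\mathbf{1}_{\{R=0\}}$ itself to satisfy the pure transport equation, a nontrivial fact about vacuum sets. The cleanest fix is to take $B_\eta(R,Z)=(Z+\mathfrak{d}\eta)/(R+\eta)$, which converges a.e.\ to $\mathfrak{s}_{\mathfrak{d}}$ both where $R>0$ and where $R=0$, and whose defect likewise converges to $-\mathfrak{s}_{\mathfrak{d}}$ everywhere. (The paper's one-line proof, using $B=Z/(R+\mathfrak{a})$, is in fact silent on this point.)

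Aside from these two points, the remaining steps — the vector application of Proposition \ref{LP2} to $(R,Z)$, the boundary-trace handling via $L^\gamma(I;L^\gamma(\Gamma^{\rm out};|\vu_B\cdot\vc n|{\rm d}S_x))$, and the regularity upgrade $\mathfrak{s}_{\mathfrak{d}}\in C(\overline I;L^p(\Omega))$ via Item 2 of the Proposition — are sound and follow the paper's reasoning.
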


Indeed, the identity (\ref{trs}) can be obtained from (\ref{P3}) with $M=2$, $r^{(1)}=Z$, $r^{(2)}=R$, $B(R,Z)= \frac {Z}{R+\mathfrak{a}}$
after letting $\mathfrak a\to 0$ with help of the Lebesgue dominated convergence theorem. 
\\

The last item of Proposition \ref{LP2}, namely identity (\ref{P3rs}) yields readily the following corollary about the "almost uniqueness" for the transport equation.

\begin{cor}{\rm [Almost uniqueness to the pure transport equation]}\label{teu}
Let $\Omega$, $\vu$, $\vu_B$ be the same as in Proposition \ref{LP2}. Let $0\le s^{(i)}\in L^\infty(Q_T)\cap C(\overline I,L^1(\Omega)\cap L^\infty(I;
L^\infty(\Gamma^{\rm out}))$, 
$i=1,2$ be two weak solutions of 
the pure transport equation (\ref{co1}) 
(i.e. they satisfy (\ref{cow}) with $v=-{\rm div}\vc u$).  

If  $s^{(1)}(0,\cdot)=s^{(2)}(0,\cdot)$, $s_B^{(1)}=s_B^{(2)}$ then
\begin{equation}\label{uset}
\mbox{for all $\tau\in\overline I$}\;s^{(1)}(\tau,\cdot)=s^{(2)}(\tau,\cdot)\;\mbox{for a.a. $x\in\{\vr(\tau,\cdot)>0\}$},
\end{equation}
$$
s^{(1)}=s^{(2)}\ \mbox{ a.e. in
$\{(t,x)\in I\times\Gamma^{\rm out}|\vr >0\}$},
$$
where   $\vr$ is {\it any} weak solution to the continuity equation (\ref{co1}) (i.e. satisfying (\ref{cow}) with
$v=0$) in the class $0\le\vr\in C(\overline I,L^1(\Omega))\cap L^2(I; L^2(\Omega))\cap L^\infty(I;L^p(\Omega))\cap L^\gamma(I; L^p(\Gamma^{\rm out}(|\vu_B\cdot\vc n|{\rm d}S_x))$, $p>1$.
\end{cor}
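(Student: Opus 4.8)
The plan is to invoke Item~3 of Proposition~\ref{LP2} --- the \emph{renormalized continuity equation for the product} $r\,B(\mathfrak{s})$ --- with the single continuity solution $r=\vr$, the pair of pure transport solutions $\mathfrak{s}=(s^{(1)},s^{(2)})$ (so $M=1$, $N=2$), and the quadratic renormalizer $B(y_1,y_2)=(y_1-y_2)^2$. Since $B\in C^\infty(\R^2)$ it belongs in particular to the class~(\ref{Bs}), and the regularity assumed on $\vr$ and on the $s^{(i)}$ in the statement is precisely what Proposition~\ref{LP2} requires --- after possibly lowering the integrability exponents to a common $\tilde\gamma\in(1,\min\{p,\gamma\}]$, which is harmless because $\Omega$ and the measure $|\vu_B\cdot\vc n|\,{\rm d}S_x$ on $\Gamma^{\rm out}$ are both finite, so that the relevant Lebesgue embeddings hold. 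Thus $\vr\,(s^{(1)}-s^{(2)})^2\in C([0,T];L^1(\Omega))$ and identity~(\ref{P3rs}) holds for it.

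Next I would test~(\ref{P3rs}) with $\varphi\equiv 1$, which is admissible since the test class there is $C^1_c(\overline I\times\overline\Omega)$ and $\overline\Omega$ is compact, so no decay in $x$ is needed. The contribution at $t=0$ equals $\int_\Omega \vr_0\,B(\mathfrak{s}_0)\,\dx=\int_\Omega\vr_0\,\big(s^{(1)}(0,\cdot)-s^{(2)}(0,\cdot)\big)^2\,\dx=0$ by the assumed coincidence of the initial data, and the $\Gamma^{\rm in}$--term vanishes because $B(\mathfrak{s}_B)=(s^{(1)}_B-s^{(2)}_B)^2=0$; the right-hand side of~(\ref{P3rs}) is identically zero for $\varphi\equiv 1$. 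Hence, for every $\tau\in\overline I$,
\begin{equation*}
\int_\Omega \vr(\tau,\cdot)\,\big(s^{(1)}(\tau,\cdot)-s^{(2)}(\tau,\cdot)\big)^2\,\dx
\;+\;\int_0^\tau\!\!\int_{\Gamma^{\rm out}} \vr\,\big(s^{(1)}-s^{(2)}\big)^2\,\vu_B\cdot\vc n\,{\rm d}S_x{\rm d}t=0 .
\end{equation*}

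Both summands are nonnegative --- the first because $\vr\ge 0$, the second because moreover $\vu_B\cdot\vc n>0$ on $\Gamma^{\rm out}$ --- so each vanishes separately. Vanishing of the first integral gives $\vr(\tau,x)\big(s^{(1)}(\tau,x)-s^{(2)}(\tau,x)\big)^2=0$ for a.a.\ $x\in\Omega$ and every $\tau\in\overline I$, which is~(\ref{uset}); vanishing of the second with $\tau=T$, together with $|\vu_B\cdot\vc n|>0$ on $\Gamma^{\rm out}$, yields $\vr\,(s^{(1)}-s^{(2)})^2=0$ a.e.\ on $I\times\Gamma^{\rm out}$, i.e.\ $s^{(1)}=s^{(2)}$ a.e.\ on $\{(t,x)\in I\times\Gamma^{\rm out}\,|\,\vr>0\}$.

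The genuinely delicate point is not in this short deduction but in the machinery it rests on: Proposition~\ref{LP2} must deliver a renormalized \emph{continuity} equation for $r\,B(\mathfrak{s})$ with \emph{no} commutator-type defect and with the correct boundary contributions on $\Gamma^{\rm in}$ and $\Gamma^{\rm out}$, even though $\vu$ is only of the form $\vv+\vu_B$ with $\vv\in L^2(I;W^{1,2}_0(\Omega;\R^3))$ on a Lipschitz domain with nonzero inflow/outflow. Accordingly, in writing the proof I would only need to verify carefully that the hypotheses of that proposition are literally met (nonnegativity, integrability, continuity of the boundary data $s^{(i)}_B$, admissibility of $B$), after which the conclusion is immediate. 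I expect no other obstacle.
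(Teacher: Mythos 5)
Your proof is correct and takes exactly the route the paper does: the paper's own proof of Corollary~\ref{teu} is the single line ``take in formula (\ref{P3rs}) in Proposition \ref{LP2} $N=2$, $r=\vr$, $B(s^{(1)}, s^{(2)}) = (s^{(1)}- s^{(2)})^2$,'' and you have supplied the (correct) elementary deduction that follows once $\varphi\equiv 1$ is inserted and the sign structure of the two surviving terms is used.
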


To prove Corollary \ref{teu}, it is enough to take in formula (\ref{P3rs}) in Proposition \ref{LP2} $N=2$, $r=\vr$, $B(s^{(1)}, s^{(2)}) =
(s^{(1)}- s^{(2)})^2$.

This result generalizes \cite[Proposition 5]{NoSCM} from the case of zero transporting velocity to the  case of general boundary data. It also generalizes
the uniqueness results from seminal paper of DiPerna-Lions \cite[Theorem II.2]{DL} and its improvement which can be deduced from Bianchini-Bonicatto 
\cite{BIBO}.\\

%

The next corollary is one of the crucial point of the compactness argument in the existence proof. The case with zero velocity at the boundary has been treated in Vasseur  et al. \cite{VWY}, and improved in \cite[Proposition 7]{AN-MP}. The generalization of \cite[Proposition 7]{AN-MP} to the
general boundary data reads as follows.

\begin{cor}\label{L2}
We suppose that $\Omega$, $\vu_B$, $R_B,Z_B$,  $R_0,Z_0$ satisfy assumptions of Corollary \ref{cont-tr}.
Let 
$$
\vu_n\in L^2(I,W^{1,2}(\Omega;\R^3)),\;\vu_n|_{I\times\partial\Omega}=\vu_B,\
0\le Z^n\le \overline a R^n,
$$
$$
(R_n, Z_n)\in 
L^\infty(I;L^\gamma(\Omega))
\cap L^2(I;L^2(\Omega))\cap  L^\gamma(I; L^\gamma(\Gamma^{\rm out};
|\vu_B\cdot\vc n|{\rm d} S_x)),
$$
where $\gamma>1$.
Suppose that
\begin{equation}
\sup_{n\in N}\Big(\|R_n\|_{L^2(Q_T)}+
\|R_n\|_{L^\infty(I;L^\gamma(\Omega))} + \|\vu_n\|_{L^2(I;W^{1,2}(\Omega))}\Big)
<\infty,
\end{equation}
and that both couples $(R_n,\vu_n)$, $(Z_n,\vu_n)$ satisfy continuity equation in the weak sense
(i.e. (\ref{cow}) with $v=0$, $\vu=\vu_n$ holds). Then:
\begin{enumerate}
\item 
$R_n, B(R_n)$, $Z_n, B(Z_n)$ $\in C_{\rm weak}(\overline I;L^\gamma(\Omega))\cap C(\overline I;L^p(\Omega))$,
$1\le p<\gamma$ and each of $R_n$, $Z_n$ is a renormalized solution of the continuity equation, i.e.
it satisfies equation (\ref{P3})$_{\vu=\vu_n, M=1}$ with $B$ specified in (\ref{B}), see
also Remark \ref{rinflow}.
\item
Up to a subsequence (not relabeled)
\begin{equation}\label{RnZn}
\begin{aligned}
(R_n, Z_n)&\to (R,Z)\;\mbox{in $C_{\rm weak}(\overline I;L^\gamma(\Omega))$ and  weakly in  $L^\gamma(I; L^\gamma(\Gamma^{\rm out};
|\vu_B\cdot\vc n|{\rm d} S_x))$},
\\
\vu_n&\rightharpoonup\vu\;\mbox{in $L^2(I;W^{1,2}(\Omega;\R^3))$,}
\end{aligned}
\end{equation}
where $(R,Z)$, $0\le Z\le R$ belongs to spaces
$$
L^2(I; L^2(\Omega))\cap L^\infty(I,L^\gamma(I,\Omega))\cap C(\overline I;L^p(\Omega)),\; 1\le p<\gamma
$$
and $(R,\vu)$ as well as { $(Z,\vu)$} verify continuity equation in the renormalized sense (i.e., integral identity (\ref{P3})$_{\vu, M=1}$ with $B$ specified in (\ref{B}), see
also Remark \ref{rinflow}), is satisfied.
\item Let $\mathfrak{d}\in [0,\overline a]$. We define in agreement with convention (\ref{renofrac+}) for all $t\in \overline I$, 
\begin{equation}\label{sn}
s_B(x)={Z_B(x)/_{\mathfrak{d}}R_B(x)},\;
s_n(t,x)={Z_n(t,x)/_{\mathfrak{d}}R_n(t,x)},\; s(t,x)= {Z(t,x)}/_{\mathfrak{d}}{R(t,x)}.
\end{equation}
Then $s_n, s\in C(\overline I;L^q(\Omega))$, $1\le q<\infty$ and  for all $t\in \overline I$, 
$0\le s_n(t,x)\le\overline a$, $0\le s(t,x)\le \overline a$ for a.a. $x\in \Omega$, for a.a. $(t,x)\in I\times\partial\Omega$, $0\le s_n(t,x)\le\overline a$, $0\le s(t,x)\le \overline a$. Moreover, both
$(s_n,\vu_n)$ and $(s,\vu)$ satisfy the pure
transport equation in the renormalized sense (i.e., integral identity (\ref{P3s})$_{\vu, N=1}$ with $B$ specified in (\ref{B}), see also Remark \ref{rinflow}), is satisfied. 
\item
Finally,
\begin{equation}\label{cvs}
\int_{\Omega}(R_n|s_n-s|^2)(\tau,\cdot)\,{\rm d} x+
\int_0^\tau\int_{\Gamma^{\rm out}} R_n|s_n-s|^2{\rm d}S_x{\rm d}t \to 0\;\mbox{for all $\tau\in \overline I$.}
\end{equation}

\end{enumerate}
\end{cor}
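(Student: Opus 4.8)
\medskip
\noindent\textbf{Proof proposal for Corollary \ref{L2}.}
The plan is to reduce items 1--3 to Proposition \ref{LP2} and Corollary \ref{cont-tr}, and to reduce the delicate item 4 to a \emph{conservation identity} for the weighted quantity $R_ns_n^2$. First, applying Proposition \ref{LP2}(1) to the scalar families $\mathfrak{r}=R_n$ and $\mathfrak{r}=Z_n$ shows that $R_n,Z_n$ (and $B(R_n),B(Z_n)$) lie in $C_{\rm weak}(\overline I;L^\gamma(\Omega))\cap C(\overline I;L^p(\Omega))$, $1\le p<\gamma$, and satisfy the renormalized continuity equation (\ref{P3})$_{\vu=\vu_n,M=1}$ (cf. also Remark \ref{rinflow}); this is item 1. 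The uniform bounds, H\"older's and Sobolev's inequalities (so $R_n\vu_n,Z_n\vu_n$ are bounded in $L^2(I;L^r(\Omega))$ with $r>1$, using the $L^2(Q_T)$ bound) and the equations give a uniform modulus of weak time continuity, so along a subsequence $(R_n,Z_n)\to(R,Z)$ in $C_{\rm weak}(\overline I;L^\gamma(\Omega))$ and weakly in $L^\gamma(I;L^\gamma(\Gamma^{\rm out};|\vu_B\cdot\vc n|{\rm d}S_x))$, $\vu_n\rightharpoonup\vu$ in $L^2(I;W^{1,2}(\Omega))$, with $0\le Z\le\overline a R$ (weak limits preserve $Z_n\ge0$, $\overline a R_n-Z_n\ge0$) and $(R,Z)$ in the asserted class. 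A standard Aubin--Lions argument (the time derivatives of $R_n,Z_n$ are bounded in $L^2(I;W^{-1,3/2}(\Omega))$ via the equations, and $R_n,Z_n$ are bounded in $L^2(I;L^2(\Omega))$) upgrades this to strong convergence in $L^2(I;W^{-1,2}(\Omega))$, whence $R_n\vu_n\rightharpoonup R\vu$ and $Z_n\vu_n\rightharpoonup Z\vu$; passing to the limit in (\ref{cow})$_{v=0}$ and using Proposition \ref{LP2}(1) again shows $(R,\vu)$ and $(Z,\vu)$ solve the continuity equation in the renormalized sense. This is item 2.

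Item 3 follows by applying Corollary \ref{cont-tr} to the triple $(R_n,Z_n,\vu_n)$ and, separately, to the limit triple $(R,Z,\vu)$: this gives $s_n=Z_n/_{\mathfrak{d}}R_n\in C(\overline I;L^q(\Omega))$ solving the pure transport equation with velocity $\vu_n$, $s=Z/_{\mathfrak{d}}R\in C(\overline I;L^q(\Omega))$ solving it with velocity $\vu$, and in both cases $0\le(\cdot)\le\overline a$ on $\Omega$ and on $\Gamma^{\rm out}$ (using $\mathfrak{d}\in[0,\overline a]$ and the domination); Proposition \ref{LP2}(2) promotes them to renormalized solutions. For later use, record the pointwise identities $R_ns_n=Z_n$ and $Rs=Z$, valid a.e. in $I\times\Omega$ and on $I\times\Gamma^{\rm out}$ (on $\{R_n=0\}$ resp. $\{R=0\}$ one has $Z_n=0$ resp. $Z=0$ by domination), as well as $s_n(0,\cdot)=s(0,\cdot)=s_0$ and inflow values $s_B$ (since $R_n(0)=R(0)=R_0$, $Z_n(0)=Z(0)=Z_0$ and the inflow data are encoded in the weak formulation).

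For item 4, the heart of the matter, I would invoke Proposition \ref{LP2}(3) with $r=R_n$, $\mathfrak{s}=s_n$ and renormalizing function $B(\sigma)=\sigma^2$ (in class (\ref{Bs})), testing with $\varphi\equiv1$ (admissible since $\overline\Omega$ is compact), to obtain, for every $\tau\in\overline I$,
$$\intO{R_ns_n^2(\tau)}+\int_0^\tau\int_{\Gamma^{\rm out}}R_ns_n^2\,\vu_B\cdot\vc n\,{\rm d}S_x{\rm d}t=\intO{R_0s_0^2}-\int_0^\tau\int_{\Gamma^{\rm in}}R_Bs_B^2\,\vu_B\cdot\vc n\,{\rm d}S_x{\rm d}t=:{\cal C}(\tau),$$
and the \emph{same} identity, with the \emph{same} right-hand side ${\cal C}(\tau)$, for $Rs^2$ (Proposition \ref{LP2}(3) applied to the limit triple $(R,s,\vu)$, using $R(0)s(0)^2=R_0s_0^2$ and inflow value $s_B$). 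Now expand, a.e. in $I\times\Omega$ and on $I\times\Gamma^{\rm out}$,
$$R_n(s_n-s)^2=R_ns_n^2-2R_ns_ns+R_ns^2=R_ns_n^2-2Z_ns+R_ns^2,$$
and integrate over $\Omega$ together with the $\Gamma^{\rm out}$ flux. The first group equals ${\cal C}(\tau)$ for every $n$. In the middle group, $Z_n\to Z$ in $C_{\rm weak}(\overline I;L^\gamma(\Omega))$ and weakly in $L^\gamma(I;L^\gamma(\Gamma^{\rm out};|\vu_B\cdot\vc n|{\rm d}S_x))$ while $s\in L^\infty$, so it converges to the same expression with $Zs=Rs^2$, i.e. to ${\cal C}(\tau)$. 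In the last group, $R_n\rightharpoonup R$ in the same senses and $s^2\in L^\infty$, so it converges to the expression with $Rs^2$, again ${\cal C}(\tau)$. Hence
$$\intO{R_n(s_n-s)^2(\tau)}+\int_0^\tau\int_{\Gamma^{\rm out}}R_n(s_n-s)^2\,\vu_B\cdot\vc n\,{\rm d}S_x{\rm d}t\ \longrightarrow\ {\cal C}(\tau)-2{\cal C}(\tau)+{\cal C}(\tau)=0$$
for every $\tau\in\overline I$; since $R_n\ge0$ and $\vu_B\cdot\vc n>0$ on $\Gamma^{\rm out}$, each of the two nonnegative summands tends to $0$, which is exactly (\ref{cvs}). (If the outflow term is wanted uniformly in $\tau$, this follows from Dini's theorem: its partial integrals are nondecreasing in $\tau$ and converge pointwise to the continuous limit $0$.)

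The genuine difficulty I anticipate is conceptual, not computational: the limit $s$ carries \emph{no} spatial regularity, so the naive energy estimate for $R_n(s_n-s)^2$ fails --- it would produce an uncontrollable term involving $R_n(s_n-s)(\vu_n-\vu)\cdot\Grad s$. The device circumventing this is precisely the observation above that $\intO{R_ns_n^2(\tau)}$ plus its outflow flux is a constant ${\cal C}(\tau)$ depending only on the data, and that the \emph{same} constant governs the limit object $Rs^2$ via Proposition \ref{LP2}(3); all remaining limit passages are then linear and merely test the weak limits of $R_n$ and of $Z_n=R_ns_n$ against the bounded weights $s$, $s^2$. A secondary, routine point is the Aubin--Lions step identifying $R\vu$, $Z\vu$ as the weak limits of $R_n\vu_n$, $Z_n\vu_n$, which is needed so that the limit triple solves the continuity equation and Corollary \ref{cont-tr} applies to it.
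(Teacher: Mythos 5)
Your proposal is correct and follows essentially the same route as the paper: items 1--3 are read off from Proposition~\ref{LP2} and Corollary~\ref{cont-tr}, and item 4 rests on the same conservation identity $\int_\Omega R_ns_n^2(\tau)+\int_0^\tau\int_{\Gamma^{\rm out}}R_ns_n^2\,\vu_B\cdot\vc n\,{\rm d}S_x{\rm d}t={\cal C}(\tau)$ (from (\ref{P3rs}) with $\varphi\equiv1$), with the cross terms handled via $R_ns_n=Z_n$ and the weak convergences (\ref{RnZn}). Your fully written expansion into the three groups is in fact a cleaner rendering of the paper's display (\ref{d++}), whose stated left-hand side carries $R_ns_n^2$ rather than $R_n(s_n-s)^2$ in the boundary flux, a slip your version does not reproduce.
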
 
\noindent

The first statement is statement 1 of Proposition \ref{LP2}. The second statement is nowadays a mathematical folklore (see (\ref{conv0}--\ref{ruinfty})
for the reasoning). The third statement follows from statement 3 in Proposition \ref{LP2}.

We prove (\ref{cvs}). We realize, employing (\ref{RnZn}) and (\ref{sn}),  
\begin{equation}\label{d++}
\forall \tau\in [0,T],\;
\lim_{n \to \infty}\Big(\intO{R_n({s}_{n}-{s})^2(\tau)} 
+ \int_0^\tau\int_{\Gamma^{\rm out }}R_n{s_n}^2
|\vu_B\cdot\vc n |{\rm d} S_x{\rm d}t\Big)
\end{equation}
$$
=\lim_{n\to \infty}\Big(
\intO{R_n\mathfrak{s}_{n}^2(\tau)} + \int_0^\tau\int_{\Gamma^{\rm out }}R_n{s_n}^2
|\vu_B\cdot\vc n |{\rm d} S_x{\rm d}t\Big)
$$
$$
-\intO{R{s}^2(\tau)} - \int_0^\tau\int_{\Gamma^{\rm out }}R{s}^2
|\vu_B\cdot\vc n |{\rm d} S_x{\rm d}t,
$$
where $R_ns_n^2$ satisfies continuity equation, in particular, with test function $\varphi=1$,
$$ 
\intO{R_n{s}^2_{n}(\tau,x)}+ \int_0^\tau\int_{\Gamma^{\rm out }}R_n{s_n}^2
|\vu_B\cdot\vc n| {\rm d} S_x{\rm d}t
$$
$$
=\intO{R_0(x){s}^2(0,x)}-\int_0^\tau\int_{\Gamma^{\rm in}}R_B{s}^2_B
\vu_B\cdot\vc n {\rm d} S_x{\rm d}t,
$$
while $Rs^2$ satisfies the same continuity equation with $\vu_n$ replaced by $\vu$. Inserting both latter identities into the right hand side of (\ref{d++})
yields the statement.
}
}

\section{Approximations}\label{APPR}

Starting from now, we shall suppose, without loss of generality,
\begin{equation}\label{initr}
0<r_0\in C^1(\overline\Omega),\; \vu_0=\vv_0+\vu_B,\;\vv_0\in  C^1(\overline\Omega).
\end{equation}

The proof of Theorem \ref{theorem1} is based on a multilevel approximation scheme that shares certain common features with the approximation of the 
compressible Navier--Stokes in \cite{FNP}, see also monographs \cite{EF70} or \cite{NoSt}. First, we introduce a sequence of finite--dimensional spaces 
$X_n \subset L^2(\Omega; \R^3)$,
\[
X_n = {\rm span} \left\{ \vc{w}_i\ \Big|\ \vc{w}_i \in \DC(\Omega; \R^3),\ i = 1,\dots, n \right\}.
\]   
Without loss of generality, we may assume that $\vc{w}_i$ are orthonormal with respect to the standard scalar product in $L^2 (\Omega)$.

Following \cite{ChJNo}, \cite{KwNo} and \cite{AN-MP}, \cite{NoSCM} we use the following parabolic approximation of the continuity equations,
\begin{equation} \label{E1}
\partial_t r + \Div (r \vu ) = \ep \Delta r \ \mbox{in}\ (0,T) \times \Omega,\ \ep > 0,
\end{equation}
supplemented with the boundary conditions 
\begin{equation} \label{E2}
\ep \Grad r \cdot \vc{n} + (r_B - r) [\vu_B \cdot \vc{n}]^- = 0 \ \mbox{in}\ [0,T] \times \partial \Omega,
\end{equation}
and the initial condition
\begin{equation} \label{E3}
r(0, \cdot) = r_0.
\end{equation}
Here, $\vu = \vv + \vu_B$, with $\vv \in C([0,T]; X_n)$, in particular, $\vu|_{\partial \Omega} = \vu_B$ and $r$
stands for $\vr,z,R,Z$, according to the case.  Note that for given $\vu$, $r_B$, $\vu_B$, this is a linear parabolic problem with the Robin boundary conditions for the unknown $r$. 

Following \cite{KwNo}, \cite{NoSCM}, we use the Galerkin approximation of the momentum equation: we look for the approximate velocity field in the form 
\[
\vu = \vv + \vu_B, \ \vv \in C([0,T]; X_n). 
\]
Accordingly, the
approximate momentum balance reads
{
\begin{equation} \label{E5}
  \intO{ (\vr+z) \vu \cdot \bfphi } \Big|_{t=0}^{t = \tau} = 
\int_0^\tau \intO{ \Big[ (\vr+z) \vu \cdot \partial_t \bfphi + (\vr+z) \vu \otimes \vu : \Grad \bfphi 
+ P_\delta(R,Z) \Div \bfphi 
\end{equation}
$$
- \mathbb{S}(\nabla\vu): \Grad \bfphi -\ep\Grad (\vr+z) \cdot \Grad \vu \cdot \bfphi \Big] }\dt
$$
}
%
for any $\bfphi \in C^1([0,T]; X_n)$, with the initial condition
\begin{equation} \label{E6}
(\vr+z) \vu(0, \cdot) = (\vr_0+z_0) \vu_0, \ \vu_0 = \vv_0 + \vu_B, \ \vv_0 \in X_n,
\end{equation}
where 
\begin{equation}\label{Pdelta}
P_\delta(R,Z)=
P(R,Z)+\delta(R^{\mathfrak{ c}}+Z^{\mathfrak{ c}} ),\;\mathfrak{c}>\max\{\frac 92,\beta,\gamma\}.
\end{equation}
For fixed parameters $n$, $\delta > 0$, $\ep > 0$, the first level approximation is a solution $[\vr, z,  R, Z,  \vu]$
\footnote{Here in the sequel, we skip the indexes $n$, $\ep$, $\delta$ and write e.g. $\xi$ instead of
$\xi_{n,\ep,\delta}$, etc. and will use eventually only one of them in the situations when it will be useful to underline the corresponding limit passage.}  
of the parabolic problem \eqref{E1}--\eqref{E3}, and the Galerkin approximation \eqref{E5}, \eqref{E6}.

\subsection{Parabolic problem (\ref{E1}--\ref{E3})}

In contrast with \cite{ChJNo}, \cite{KwNo}, we do not want to use the maximal regularity theory of parabolic equations
(which requires at least $C^2$ boundary, see Denk, H\"uber, Pr\"uss \cite{DeHuPr}), but, we shall employ rather
the theory from Crippa, Donadello, Spignolo \cite{Crippa} which allows merely the Lipschitz boundaries.

For Lipschitz domains the usual parabolic estimates fail at the level of the spatial derivatives and we 
are forced to use the weak formulation: 
\begin{equation} \label{E4}
\begin{split}
\left[ \intO{ r \varphi } \right]_{t = 0}^{t = \tau}&= 
\int_0^\tau \intO{ \left[ r \partial_t \varphi + r \vu \cdot \Grad \varphi - \ep \Grad r \cdot \Grad \varphi \right] }
\dt \\ 
&- \int_0^\tau \int_{\partial \Omega} \varphi r \vu_B \cdot \vc{n} \ {\rm d} S_x \dt + 
\int_0^\tau \int_{\partial \Omega} \varphi (r - r_B) [\vu_B \cdot \vc{n}]^{-}  \ {\rm d}S_x \dt,\ 
r(0, \cdot) = r_0, 
\end{split}
\end{equation}
for any test function
\[
\varphi \in L^2(0,T; W^{1,2}(\Omega)),\ \partial_t \varphi \in L^1(0,T; L^2(\Omega)).
\]

\begin{lem} \label{EL1}
Let $\Omega \subset \R^3$ be a bounded Lipschitz domain and $\vu = \vv + \vu_B$, $\vv \in C(\overline I; X_n)$. Suppose that
$(r_B,\vu_B)$ belongs to the class (\ref{ruB}) while $(r_0,\vu_0)$ belongs to the class (\ref{initr}). Then we have:
\begin{enumerate}
\item
The initial--boundary value problem (\ref{E1}--\ref{E3}) 
admits a weak solution $r$ {specified in (\ref{E4})}, unique in the class 
$$
r \in L^2(I; W^{1,2}(\Omega)) \cap C(\overline I; L^2(\Omega)).
$$
The norm in the aforementioned spaces is bounded only in terms of the data $r_B$, $r_0$, $\vu_B$ and 
$\|\vv, {\rm div} \vv\|_{L^\infty(I;L^\infty(\Omega))}$.
\item Moreover, $\partial_t r\in L^2(I\times\Omega)$ and $\sqrt\ep\nabla r\in L^\infty(I;L^2(\Omega))$ are bounded in terms of the data $r_B$, $r_0$, $\vu_B$ and 
$\|\vv, {\rm div} \vv\|_{L^\infty(I;L^\infty(\Omega))}$ and $\nabla^2 r\in L^2(I; L^2_{\rm loc}(\Omega)$ is bounded in the same way on any compact set $K$ of $\Omega$
with the constant dependent in addition on $K$.
\item Strong maximum principle: The solution satisfies,
\begin{equation}\label{max}
\begin{array}{c}
{
\forall \tau\in\overline I,\;\| r(\tau) \|_{L^\infty( \Omega)}
}
 \leq M
 \exp \left( T \| \Div \vu \|_{L^\infty((0,\tau) \times \Omega)} \right),
 \\ \\
\mbox{for a.a. $\tau\in I$},\; r(\tau,x)\leq M\exp \left( T \| \Div \vu \|_{L^\infty((0,\tau) \times \Omega)} \right)\;\mbox{for a.a. $x\in \partial\Omega$},
\end{array}
\end{equation}
where
$$
M=\max \left\{\max_\Omega r_0,\max_{\Gamma^{\rm in}} r_B, 
\| \vu_B \|_{L^\infty((0,T) \times \Omega)} \right\}.
$$
\item Renormalization: For any $B\in C^2(R)$,
\begin{equation}\label{reno}
\intO{B(r)}\Big|_0^\tau+\ep\int_0^\tau\intO{|\Grad r|^2 B''(r)}{\rm d}t+ \int_0^\tau\intO{\Big(rB'(r)-B(r)\Big)\Div\vu}{\rm d}t
\end{equation}
$$
-\int_0^\tau\int_{\partial\Omega}[\vu_B\cdot\vc n]^- E_B(r_B|r){\rm d}S_x{\rm d}t
+ \int_0^\tau\int_{\partial\Omega}[\vu_B\cdot\vc n]^+ B(r){\rm d}S_x{\rm d}t= 
-\int_0^\tau\int_{\partial\Omega}[\vu_B\cdot\vc n]^- B(r_B){\rm d}S_x{\rm d}t,
$$
where $\tau\in \overline I$ and 
$$ 
E_B(r|\tilde r)=B(r)- B'(\tilde r)(r-\tilde r)-B'(\tilde r).
$$
\item Strong minimum principle: The solution satisfies,
\begin{equation}\label{min}
\begin{array}{c}
 \forall \tau\in\overline I,\;{\rm ess} \inf_{x\in \Omega} r(\tau,x) \geq  
m
\exp \left( -T \| \Div \vu \|_{L^\infty((0,T) \times \Omega)} \right),
\\ \\
\mbox{for a.a. $\tau\in I$},\; r(\tau,x)\ge m\exp \left(- T \| \Div \vu \|_{L^\infty((0,\tau) \times \Omega)} \right)\;\mbox{for a.a. $x\in \partial\Omega$},
\end{array}
\end{equation}
where
$$
m=\min \left\{ \min_\Omega r_0 , \min_{\Gamma^{\rm in}} r_B \right\}. 
$$
\end{enumerate}
\end{lem}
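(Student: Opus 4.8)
\medskip
\noindent\textbf{Proof proposal for Lemma \ref{EL1}.}

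\emph{Strategy and Item 1.} The plan is to base everything on the weak formulation (\ref{E4}) together with the observation that, under the convention $[a]^-=\min\{a,0\}\le 0$, the Robin condition (\ref{E2}) is tailored so that testing by a function of $r$ produces boundary integrals of favourable sign. For Item 1 one may simply invoke \cite[Lemma 3.2]{Crippa} -- which is exactly why mere Lipschitz regularity of $\partial\Omega$ suffices -- or run a spatial Galerkin scheme: fix a basis $\{e_k\}$ of $L^2(\Omega)$, orthonormal there, contained in and dense in $W^{1,2}(\Omega)$, and look for $r_m(t)=\sum_{k\le m}c_k^m(t)e_k$ solving (\ref{E4}) tested against $e_1,\dots,e_m$ with $r_m(0)$ the $L^2$--projection of $r_0$; since $\vu=\vv+\vu_B$ with $\vv\in C(\overline I;X_n)$, this is a linear ODE system with continuous coefficients, uniquely solvable on $[0,T]$. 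Testing by $r_m$ itself -- equivalently, (\ref{reno}) with $B(s)=s^2$ -- and integrating the convective term by parts gives, using (\ref{E2}),
\[
\tfrac12\Dt\|r_m\|_{L^2(\Omega)}^2+\ep\|\Grad r_m\|_{L^2(\Omega)}^2+\tfrac12\intpO{r_m^2[\vu_B\cdot\vc n]^+}-\tfrac12\intpO{(r_m-r_B)^2[\vu_B\cdot\vc n]^-}=-\tfrac12\intO{r_m^2\Div\vu}-\tfrac12\intpO{r_B^2[\vu_B\cdot\vc n]^-},
\]
where the two boundary integrals on the left are $\ge0$ and the last term is bounded by $c\|r_B\|_{L^\infty(\Gamma^{\rm in})}^2$; a Gronwall argument then controls $\|r_m\|_{C(\overline I;L^2(\Omega))}$, $\sqrt\ep\|\Grad r_m\|_{L^2(Q_T)}$ and, from the equation, $\partial_t r_m$ in $L^2(I;(W^{1,2}(\Omega))^\ast)$ in terms of $r_0,r_B,\vu_B,\ep$ and $\|\vv,\Div\vv\|_{L^\infty(I;L^\infty(\Omega))}$. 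Passing to the limit (the problem being linear, weak convergence suffices) yields a weak solution in $L^2(I;W^{1,2}(\Omega))\cap C(\overline I;L^2(\Omega))$; uniqueness follows by applying the same identity to the difference of two solutions, which solves the homogeneous problem.

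\emph{Items 2 and 4.} Testing the Galerkin identity by $\partial_t r_m$ -- legitimate since it lies in the span of $e_1,\dots,e_m$ -- and using (\ref{E2}), the convective and boundary contributions reorganize into $\tfrac\ep2\Dt\|\Grad r_m\|_{L^2(\Omega)}^2$ plus $\Dt$ of $\intpO{r_m^2[\vu_B\cdot\vc n]^+}$ and of $\intpO{(r_m-r_B)^2[\vu_B\cdot\vc n]^-}$, while $\int_\Omega\Div(r_m\vu)\,\partial_t r_m\,\dx$ is absorbed by $\tfrac12\|\partial_t r_m\|_{L^2(\Omega)}^2$ after bounding $\|\Div(r_m\vu)\|_{L^2(Q_T)}\le\|\vu\|_{L^\infty}\|\Grad r_m\|_{L^2(Q_T)}+\|\Div\vu\|_{L^\infty}\|r_m\|_{L^2(Q_T)}$ by Item 1; since $r_0\in C^1(\overline\Omega)$ by (\ref{initr}), integrating in $t$ gives $\partial_t r\in L^2(Q_T)$ and $\sqrt\ep\,\Grad r\in L^\infty(I;L^2(\Omega))$. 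Then $-\ep\Delta r=-\partial_t r-\Div(r\vu)\in L^2(Q_T)$, so the interior $W^{2,2}$ elliptic estimate applied for a.a.\ $t$ on each $\Omega'\Subset\Omega$, integrated in $t$, gives $\Grad^2 r\in L^2(I;L^2(\Omega'))$. For Item 4: by Items 3 and 5 the solution stays in a fixed compact interval $[m',M']\subset(0,\infty)$, so only $B|_{[m',M']}$ matters; there $B',B''$ are bounded, hence $B'(r)\in L^2(I;W^{1,2}(\Omega))$ with $\partial_t B'(r)=B''(r)\partial_t r\in L^2(Q_T)$, so $\varphi=B'(r)$ is an admissible test function in (\ref{E4}). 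Inserting it, integrating the convective term by parts via $\psi(s):=sB'(s)-B(s)$ (so that $r\vu\cdot\Grad B'(r)=\vu\cdot\Grad\psi(r)$), using $-\ep\Grad r\cdot\Grad B'(r)=-\ep B''(r)|\Grad r|^2$, and rewriting the resulting boundary integrand through the second-order Taylor expansion $B(r_B)=B(r)+B'(r)(r_B-r)+E_B(r_B|r)$ with $E_B(r_B|r)=B(r_B)-B(r)-B'(r)(r_B-r)$ the quadratic remainder, one arrives exactly at (\ref{reno}).

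\emph{Items 3 and 5.} I would establish the two principles by comparison with the explicit barriers $\overline r(t)=M\exp(\Lambda t)$ and $\underline r(t)=m\exp(-\Lambda t)$, $\Lambda=\|\Div\vu\|_{L^\infty((0,\tau)\times\Omega)}$. One checks $\partial_t\overline r+\Div(\overline r\vu)-\ep\Delta\overline r=\overline r(\Lambda+\Div\vu)\ge0$ and, on $\Gamma^{\rm in}$, $\ep\Grad\overline r\cdot\vc n+(r_B-\overline r)[\vu_B\cdot\vc n]^-=(r_B-\overline r)[\vu_B\cdot\vc n]^-\ge0$ since $r_B\le M\le\overline r$ there and $[\vu_B\cdot\vc n]^-\le0$; symmetrically $\underline r$ is a sub-solution using $r_B\ge m\ge\underline r$. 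Testing the equation for $r-\overline r$ (resp.\ $\underline r-r$) by its positive part $w$ -- admissible by Item 2 -- the Robin contribution comes with the favourable sign (this is where $[\vu_B\cdot\vc n]^-\le0$ and $r_B\le\overline r$, resp.\ $r_B\ge\underline r$, on $\Gamma^{\rm in}$ enter), while the only unfavourable boundary term, $\tfrac12\int_{\Gamma^{\rm in}}w^2|\vu_B\cdot\vc n|\,{\rm d}S$ arising from the convective part, is absorbed into $\ep\|\Grad w\|_{L^2(\Omega)}^2$ via the trace interpolation inequality $\|w\|_{L^2(\partial\Omega)}^2\le\eta\|\Grad w\|_{L^2(\Omega)}^2+C_\eta\|w\|_{L^2(\Omega)}^2$. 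Since $w(0)=0$ by the choice of $M$ and $m$, Gronwall forces $w\equiv0$, giving (\ref{max}) and (\ref{min}) inside $\Omega$; the boundary statements follow because the trace operator $W^{1,2}(\Omega)\to L^2(\partial\Omega)$ commutes with the truncation $s\mapsto s^+$. (Alternatively (\ref{max}) follows from (\ref{reno}) with $B(s)=s^p$, Gronwall, and $p\to\infty$.)

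\emph{Main obstacle.} The delicate point throughout is the \emph{inflow} part of $\partial\Omega$: there $r$ is not prescribed, only coupled to $r_B$ through (\ref{E2}), so every energy-type identity carries a boundary term involving the trace of $r$ on $\Gamma^{\rm in}$ with a priori the wrong sign. Taming it -- by absorption into the $\ep$--coercive term through trace interpolation, and, for the sharp exponential bounds of Items 3 and 5, by comparison with explicit barriers exploiting the precise sign structure of (\ref{E2}) -- is the heart of the argument, and is precisely what lets the construction run on a general Lipschitz domain, with only the weak formulation (\ref{E4}) ever used and no appeal to parabolic maximal regularity or to $C^2$ regularity of $\partial\Omega$.
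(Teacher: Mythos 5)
Your proposal is correct, and it spells out in self-contained form the arguments that the paper simply delegates to references: the paper's own ``proof'' of Lemma~\ref{EL1} consists of citations to \cite[Lemmas 3.2, 3.4]{Crippa} and \cite[Lemmas 3.1--3.3, Cor.~3.4]{AbFeNo}, and the machinery in those references is precisely the one you run (energy identity for existence/uniqueness, testing by $\partial_t r$ for the extra regularity, interior $W^{2,2}$ elliptic estimate, Stampacchia-type truncation against an explicit exponential barrier for the max/min principles, and testing by $B'(r)$ for the renormalized identity). Your derivation of (\ref{reno}) is exactly right, including the Bregman form $E_B(r_B|r)=B(r_B)-B(r)-B'(r)(r_B-r)$ -- which quietly corrects a typographical slip in the paper's own display, where the last term reads $-B'(\tilde r)$ instead of $-B(\tilde r)$. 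One cosmetic inaccuracy: in Item 2 you claim that testing by $\partial_t r_m$ produces a term $\Dt\intpO{r_m^2[\vu_B\cdot\vc n]^+}$; it does not. Inserting the Robin condition (\ref{E2}) for the Laplacian boundary term, together with time-independence of $r_B$ and $\vu_B$, gives only $\tfrac\ep2\Dt\|\Grad r_m\|_{L^2(\Omega)}^2-\tfrac12\Dt\intpO{(r_m-r_B)^2[\vu_B\cdot\vc n]^-}$, both with favourable sign; no $[\vu_B\cdot\vc n]^+$ contribution arises because the convective term is kept on the right-hand side and absorbed into $\tfrac12\|\partial_t r_m\|^2$. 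Since that putative extra term would itself have had a good sign, this slip does not affect the estimate. Finally, note that your argument makes Item 4 depend on Items 3 and 5 (to confine $r$ to a compact interval where $B',B''$ are bounded), which is fine since you prove Items 3 and 5 independently via barriers -- but the parenthetical remark offering an alternative proof of (\ref{max}) through (\ref{reno}) with $B(s)=s^p$ would then be circular, and should be flagged as requiring a separate justification of (\ref{reno}) not resting on the $L^\infty$ bound.
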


The first item is a particular case of Lemma 3.2 in Crippa et al. \cite{Crippa}, see also \cite[Lemma 3.1]{AbFeNo}. The latter reference contains also the proof of the second item. 
The third item, maximum principle, is proved  Crippa et al. \cite[Lemma 3.4]{Crippa}, see also \cite[Lemma 3.2]{AbFeNo}. Renormalization is proved in \cite[Lemma 3.3]{AbFeNo}. 
Finally, the minimum principle is shown in the latter reference in Corollary 3.4.\footnote{{ Inequalities (\ref{max}) and (\ref{min}) are
proved in \cite{AbFeNo} with ${\rm ess} \sup_{t,x} r(t,x)$ and
${\rm ess} \inf_{t,x} r(t,x)$, respectively. They however hold for all
 $\tau\in \overline I$ provided, in addition, $r\in C_{\rm weak}(\overline I; L^\gamma(\Omega))$, $\gamma\ge 1$. Indeed, suppose for example for the quantity $r$ in addition to the latter regularity, $r(t,x)\ge 0$  for a.a. $(t,x)\in I\times\Omega$. Then for any $M\subset\Omega$, $|M|>0$, any $\tau_0\in I$ and $\delta>0$ "small", $\int_{\tau_0-\delta}^{\tau_0+\delta}
\int_M r(t,x){\rm d}x{\rm d}t\ge 0$; whence $\int_M r(\tau_0,x){\rm d}x\ge 0$, and
$r(\tau_0,\cdot)\ge 0$ a. a. in $\Omega$ by the theorem on Lebesgue points. Since $r\in L^2(I;W^{1,2}(\Omega))$, the second inequality in (\ref{max}) resp. in (\ref{min}) follows from the first one and the trace theorem.}
}

\subsection{Existence of approximations at level I}

The \emph{existence} of the approximate solutions at the level of 
the parabolic problem (\ref{E1}--\ref{E3}) coupled with the Galerkin approximation (\ref{E5}--\ref{E6})
can be proved in the same way as in \cite[Section 4]{ChJNo} (mono-fluid case with non zero inflow-outflow) combined with \cite[Section 3]{NoSCM}, 
eventually with \cite[Section 4]{AN-MP} (multi-fluid with zero boundary conditions).  Specifically, 
for $\vu = \vu_B + \vv$, $\vv \in C([0,T]; X_n)$, we identify the unique solutions $r = 
r [\vu]$ of (\ref{E1}--\ref{E3}), where $r$ stands for $\vr, z, R, Z$ and plug them as $\vr$, $z$, $R$, $Z$ in \eqref{E5}. 
The unique solution $\vu = \vu[\vr,z,R,Z]$ of \eqref{E5} defines a mapping 
\[
\mathcal{T}: \vv \in C([0,T]; X_n) \mapsto   \mathcal{T}[\vc{v}] = (\vu[\vr,z, R,Z] - \vu_B)  
\in C([0,T]; X_n).
\]
The first level approximate solutions $r = r_{\delta, \ep, n}$, $\vu = \vu_{\delta, \ep, n}$ -- here, $r$ stands for $\vr$, $z$, $R$, $Z$--are obtained via a fixed point 
through the mapping $\mathcal{T}$. This  procedure is detailed in \cite{ChJNo} and in { \cite{KwNo}} for the mono-fluid case
with the non zero inflow-outflow and in \cite{NoSCM} for the multi-fluid case with the no-slip boundary conditions. Combinnig
\cite[Section 4]{KwNo} with \cite[Section 4]{NoSCM}, 
we easily deduce the following result.\footnote{ The energy inequality (\ref{E7}) in \cite[Lemma 4.2]{KwNo} and in \cite[Section 4]{NoSCM} 
is derived under assumption $\Omega\in C^2$. This assumption is needed due to the treatment of the parabolic problem (\ref{E1}--\ref{E3}) via
the classical maximal regularity methods. With Lemma \ref{EL1} at hand, the same proof can be carried out
without modifications also in Lipschitz domains.}

\begin{prop}{\rm [Approximate solutions, level I]} \label{EP1}
Let $\Omega \subset \R^3$ be a bounded Lipschitz domain.  
Let the data $(\vr_B,z_B,R_B,Z_B,\vu_B)$, $(\vr_0,z_0,R_0,Z_0,\vu_0)$  belong to the class { (\ref{ruB}--\ref{calO})},
(\ref{initr}). Suppose that assumptions (\ref{regP}--\ref{convH}) are satisfied.

Then for each fixed $\delta > 0$, $\ep > 0$, $n > 0$, there exists a solution 
$(\vr_n,z_n,R_n,Z_n,\vu_n=\vv_n+\vu_B)$ of the approximate problem (\ref{E4}) and \eqref{E5}, \eqref{E6}. 
Moreover, the following holds:
\begin{enumerate}
\item Lower and upper bounds of "densities":
{
\begin{equation}\label{eq5.1} 
\begin{array}{c}
\forall t\in\overline I,\;
R_n(t,x)\geq \underline c(\delta, n)>0,\;Z_n(t,x)\geq \underline c(\delta, n)>0, \; \underline a R_n(t,x) \leq Z_n(t,x) \leq \overline a R_n(t,x),\\
\underline F R_n(t,x)\le
\vr_n(t,x)\le\overline F R_n(t,x),\; \underline G R_n(t,x)\le
z_n(t,x)\le \overline F Z_n(t,x)\;\mbox{ for a.a. $x\in\Omega$,}
\\ \\
\mbox{for a.a. $t\in I$},\;
R_n(t,x)\geq \underline c(\delta, n)>0,\;Z_n(t,x)\geq \underline c(\delta, n)>0, \; \underline a R_n(t,x) \leq Z_n(t,x) \leq \overline a R_n(t,x),\\
\underline F R_n(t,x)\le
\vr_n(t,x)\le\overline F R_n(t,x),\; \underline G R_n(t,x)\le
z_n(t,x)\le \overline F Z_n(t,x)\;\mbox{ for a.a. $x\in\partial\Omega$.}
\end{array}
\end{equation}
}
\item The approximate energy inequality 
\begin{equation} \label{E7}
\begin{split}
&\left[ \intO{\left[ \frac{1}{2} (\vr_n+z_n)|\vv_n|^2 + { H}_\delta(R_n,Z_n)  \right] } \right]_{t = 0}^{ t = \tau} + 
\int_0^\tau \intO{\mathbb{S}(\Grad\vu_n):\Grad\vu_n } \dt 
\\ 
&+\int_0^\tau \int_{\Gamma^{\rm out}} { H}_\delta(R_n,Z_n)  \vu_B \cdot \vc{n} \ {\rm d} S_x \dt
- \int_0^\tau \int_{\Gamma^{\rm in}} E_{{ H}_\delta}(R_B,Z_B|R_n,Z_n) \vu_B \cdot \vc{n} 
\ {\rm d}S_x \ \dt \\
&+
\ep \int_0^\tau \intO{ { \nabla_{R,Z}^2 {H}_\delta(R_n,Z_n)} [\Grad R_n,\Grad Z_n] } \dt \\
&\leq
- 
\int_0^\tau \intO{ \left[ (\vr_n+z_n) \vu_n \otimes \vu_n + P_\delta(R_n,Z_n) \mathbb{I} \right]  :  \Grad \vu_B } \dt \\
&+ \int_0^\tau { \intO{ (\vr_n+z_n) \vu_n  \cdot\Grad \vu_B \cdot  \vu_B  } }
\dt 
+ \int_0^\tau \intO{ \mathbb{S}(\Grad\vu_n) : \Grad \vu_B } \dt 
\\
&- \int_0^\tau \int_{\Gamma^{\rm in}} { H}_\delta(R_B,Z_B)  \vu_B \cdot \vc{n} \ {\rm d} S_x \dt  
\end{split}
\end{equation}
holds for any $0 \leq \tau \leq T$, where 
\begin{equation}\label{Hdelta}
{H}_\delta(R,Z)= H(R,Z)
+\frac \delta{\mathfrak{c}-1}\Big(R^{\mathfrak{c}}+ Z^{\mathfrak{c}} \Big),
\end{equation}
and 
$$
\nabla_{R,Z}^2 {H}_\delta(R,Z) [\Grad R,\Grad Z]=
\partial^2_RH_\delta(R,Z)|\nabla R|^2+2\partial_{R}\partial_ZH_\delta\nabla R\cdot\nabla Z
+\partial^2_ZH_\delta(R,Z)|\nabla Z|^2.
$$
\item Renormalized identity
\begin{equation}\label{eq5.2}
\|r_n\|^2_{L^2(\Omega)}\Big|_0^\tau + \varepsilon \int_0^\tau\|\Grad r_n\|_{L^2(\Omega)}^2{\rm d}t  
-\int_0^\tau\int_{\Gamma^{\rm in}}\vu_B\cdot\vc n (r_n-r_B)^2{\rm d}S_x{\rm d}t
\end{equation}
$$
+ \int_0^\tau\int_{\Gamma^{\rm out}}r_n^2\vu_B\cdot\vc n {\rm d}S_x{\rm d}t= 
-\int_0^\tau\int_{\Gamma^{\rm in}}r_B^2\vu_B\cdot\vc n {\rm d}S_x{\rm d}t
-\int_0^\tau\int_\Omega r_n^2\Div \vu_n\dx{\rm d}t
$$
holds, where $r_n$ stands for $\vr_n,z_n,R_n,Z_n$.
\end{enumerate}
\end{prop}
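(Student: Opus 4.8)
The plan is to fix $n$, $\delta>0$, $\ep>0$, construct the first-level approximation by a Schauder fixed point on $C(\overline I;X_n)$, and then read off the three asserted properties. First I would take $\vv\in C(\overline I;X_n)$, set $\vu=\vv+\vu_B$ --- which is smooth in $x$, with $\|\vv,\Div\vv\|_{L^\infty(I\times\Omega)}$ finite since $X_n$ is spanned by functions in $\DC(\Omega;\R^3)$ --- and use Lemma \ref{EL1} to solve the linear parabolic problems (\ref{E1}--\ref{E3}) for $r=\vr,z,R,Z$; these yield $r[\vu]\in L^2(I;W^{1,2}(\Omega))\cap C(\overline I;L^2(\Omega))$ bounded above and below away from $0$ by items 3 and 5 of Lemma \ref{EL1} and the strict positivity of the data in (\ref{ruB}), (\ref{initr}). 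Plugging these into (\ref{E5}) turns it into a Carath\'eodory system of ODEs for the $X_n$-coordinates of $\vv$; local solvability, the a priori bound from (the Galerkin form of) the energy identity (\ref{E7}), and the continuity and compactness of the induced map $\mathcal{T}\colon\vv\mapsto\vu[\vr,z,R,Z]-\vu_B$ on a ball of $C(\overline I;X_n)$ then give a fixed point on all of $[0,T]$. This is carried out in full detail in \cite[Section 4]{ChJNo}, \cite[Section 4]{KwNo} for the mono-fluid non-zero inflow-outflow case and in \cite[Section 4]{NoSCM}, \cite[Section 4]{AN-MP} for the multi-fluid no-slip case; the only new ingredient is that Lemma \ref{EL1} replaces the maximal-regularity treatment of (\ref{E1}--\ref{E3}) used there, and so permits a Lipschitz $\Omega$.

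For item 1 I would first apply the minimum principle (Lemma \ref{EL1}, item 5) directly to $R_n$ and $Z_n$: strict positivity of the data gives $R_n,Z_n\ge\underline{c}(\delta,n)>0$, the constant depending on $n,\delta$ only through $\|\Div\vu_n\|_{L^\infty(I\times\Omega)}$, which is finite thanks to the a priori energy bound and the equivalence of norms on $X_n$. For the comparison inequalities I would use that $R_n$ and $Z_n$ solve the \emph{same} linear problem (\ref{E1}--\ref{E3}) with the \emph{same} transporting field $\vu_n$: hence $Z_n-\underline{a}R_n$, $\overline{a}R_n-Z_n$, $\vr_n-\underline{F}R_n$, $\overline{F}R_n-\vr_n$, and the analogous pair built from $z_n$ and $Z_n$, are again weak solutions of (\ref{E1}--\ref{E3}) with non-negative initial and inflow data --- by $(R_0,Z_0),(R_B,Z_B)\in\overline{\cal O}$ and the remaining inequalities in (\ref{eq2.1}) --- so the minimum principle forces each of them to be $\ge 0$, which is (\ref{eq5.1}). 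The versions valid for every $t\in\overline I$, and the a.e.\ statements on $\partial\Omega$, follow as in the footnote to Lemma \ref{EL1}, using $r_n\in C(\overline I;L^2(\Omega))\cap L^2(I;W^{1,2}(\Omega))$ and the trace theorem.

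Item 3 is immediate from the renormalization identity (\ref{reno}) of Lemma \ref{EL1} with $B(r)=r^2$: then $B''=2$, $rB'(r)-B(r)=r^2$, $E_B(r_B|r_n)=(r_B-r_n)^2$, and writing $[\vu_B\cdot\vc{n}]^-$ and $[\vu_B\cdot\vc{n}]^+$ as $\vu_B\cdot\vc{n}$ restricted to $\Gamma^{\rm in}$, resp.\ to $\Gamma^{\rm out}$, and $0$ elsewhere, gives (\ref{eq5.2}). The bulk of the work, and the step I expect to be the main obstacle, is item 2. I would test (\ref{E5}) with $\bfphi=\vv_n=\vu_n-\vu_B$ and test the parabolic equations for $\vr_n$ and $z_n$ (hence for $\vr_n+z_n$) against $\tfrac12|\vv_n|^2$; since $\vu_B$ is $x$-dependent only and $\vv_n$ is smooth in $x$, the time-derivative and convective terms assemble into $\big[\int_\Omega\tfrac12(\vr_n+z_n)|\vv_n|^2\big]_0^\tau$ plus the terms carrying $\Grad\vu_B$ on the right-hand side of (\ref{E7}), while the $\ep\Delta(\vr_n+z_n)$ contribution integrates by parts into exactly the $-\ep\Grad(\vr_n+z_n)\cdot\Grad\vu\cdot\vv_n$ term already present in (\ref{E5}), plus a boundary term that the Robin condition (\ref{E2}) absorbs. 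For the pressure term I would exploit that, by item 1, the pair $(R_n,Z_n)(t,x)$ ranges in a compact subset of the open set on which $H_\delta$ is $C^2$ (cf.\ (\ref{regP}), (\ref{regH}), (\ref{Hdelta})) and that $R_n,Z_n\in L^2(I;W^{1,2}(\Omega))$ with $\partial_t R_n,\partial_t Z_n\in L^2(I\times\Omega)$ by Lemma \ref{EL1}, so that $\partial_R H_\delta(R_n,Z_n)$ and $\partial_Z H_\delta(R_n,Z_n)$ are admissible test functions in the weak formulation (\ref{E4}) for $R_n$, $Z_n$. Testing with these, adding, and using the identity
\[
R\,\partial_R H_\delta(R,Z)+Z\,\partial_Z H_\delta(R,Z)-H_\delta(R,Z)=P_\delta(R,Z)
\]
--- which follows from (\ref{H*}) for $H$ together with a one-line check on the $\delta$-monomials in (\ref{Pdelta}), (\ref{Hdelta}) --- produces $\tfrac{d}{dt}\int_\Omega H_\delta(R_n,Z_n)$, the bulk term $\int_\Omega P_\delta(R_n,Z_n)\Div\vu_n$ (which cancels the identical term coming from the kinetic balance, leaving only $-\int_\Omega P_\delta\Div\vu_B$ on the right), the favourable dissipative term $-\ep\int_\Omega\nabla_{R,Z}^2 H_\delta(R_n,Z_n)[\Grad R_n,\Grad Z_n]$, non-negative by (\ref{convH}), and boundary contributions which, via (\ref{E2}), recombine on $\Gamma^{\rm in}$ into terms involving $E_{H_\delta}(R_B,Z_B|R_n,Z_n)$ and $H_\delta(R_B,Z_B)$, equal $H_\delta(R_n,Z_n)\,\vu_B\cdot\vc{n}$ on $\Gamma^{\rm out}$, and vanish on $\Gamma^0$. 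Collecting everything yields (\ref{E7}) --- in fact with equality at this level, the inequality being kept only for consistency with the later passages $\ep\to0$ and $\delta\to0$. The delicate part is precisely this last bookkeeping: tracking the signs of the $\ep$-diffusion boundary terms through (\ref{E2}) and verifying that the inflow terms close up into the relative-energy functional $E_{H_\delta}$; this follows the template of \cite[Section 4]{KwNo} and \cite[Section 4]{NoSCM}, Lemma \ref{EL1} being what makes the whole computation run on a merely Lipschitz domain.
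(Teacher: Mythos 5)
Your proposal is correct and follows the same route the paper takes: a Schauder fixed point over $C(\overline I;X_n)$ coupling Lemma \ref{EL1} for the parabolic subproblems with the Galerkin ODE system, the min/max principle applied to the linear combinations $Z_n-\underline a R_n$, $\overline a R_n-Z_n$, $\vr_n-\underline F R_n$, etc.\ for the domination inequalities, (\ref{reno}) with $B(r)=r^2$ for item 3, and the standard kinetic-plus-Helmholtz energy computation (testing (\ref{E5}) by $\vv_n$ and (\ref{E4}) by $\partial_R H_\delta$, $\partial_Z H_\delta$) for item 2 --- all of which the paper itself merely delegates to \cite[Section~4]{KwNo} and \cite[Section~4]{NoSCM}. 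One small caveat worth noting: since $(R_n,Z_n)$ lies only in $\overline{\cal O}$ (not a compact subset of the open set $\cal O$) and $H_\delta\in C^2({\cal O})\cap C^1(\overline{\cal O})$, justifying $\partial_{R,Z}H_\delta(R_n,Z_n)$ as test functions and the sign of the $\ep\,\nabla^2_{R,Z}H_\delta$ term requires either an interior approximation or the convexity hypothesis (\ref{convH}) more carefully than ``compact subset of the open set'' suggests, which is also why (\ref{E7}) is stated as an inequality rather than the equality you assert.
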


This is level I of approximations (with three parameters $n$, $\ep$, $\delta$). We shall pass first to the limit $n\to\infty$ in order to obtain level II of approximations (with two parameters $\ep$, $\delta$). Then we obtain level III of approximations (with one parameter $\delta$) by letting
$\ep\to 0+$. Finally, we effectuate limit $\delta$ to $0+$ in order to obtain a weak solution of the (academic) problem (\ref{eq1.1}--\ref{eq1.3}).

\section{Limit passage from level I to level II (limit $n\to\infty$)}\label{Sn}

The goal of this section is to pass to the limit $n\to\infty$ in Proposition \ref{EP1}. The result
is formulated in Proposition \ref{EP2} at the end of the section.

\subsection{Limit in the parabolic equations and in the momentum equation (start)}

In view of (\ref{eq2.1}--\ref{convH}), (\ref{ruB}), (\ref{initr}), (\ref{regH}--\ref{grH}) and (\ref{Pdelta}), (\ref{Hdelta}) 
we deduce  from (\ref{eq5.1}--\ref{eq5.2}) the following bounds:
{
\begin{equation} \label{eq5.1+}
\begin{array}{c}
\mbox{$\forall t \in \overline I$},\; 
R_n(t,x)\geq 0, \; Z_n(t,x) \geq 0,\; \underline a R_n(t,x)\le
Z_n(t,x)\le\overline a R_n(t,x),\\
\underline F R_n(t,x)\le
\vr_n(t,x)\le\overline F R_n(t,x),\; \underline G R_n(t,x)\le
z_n(t,x)\le \overline F Z_n(t,x)\;\mbox{ for a.a. $x\in\Omega$,}
\\ \\
\mbox{for a.a. $(t,x)\in I\times\partial\Omega$},\;
R_n(t,x)\geq 0,\;Z_n(t,x)\geq 0, \; \underline a R_n(t,x) \leq Z_n(t,x) \leq \overline a R_n(t,x),\\
\underline F R_n(t,x)\le
\vr_n(t,x)\le\overline F R_n(t,x),\; \underline G R_n(t,x)\le
z_n(t,x)\le \overline F Z_n(t,x).
\end{array}
\end{equation}
and
\begin{align}
&\|(\vr_n+z_n)|\vv_n|^2\|_{L^\infty(I,L^1(\Omega ))}\aleq
 C(\delta),\label{ts1?}\\
&
\|\vv_n\|_{L^2(I,W^{1,2}(\Omega ))}\le C(\delta),\label{ts2?}\\
&
 \|r_n\|_{L^\infty(I,L^{\mathfrak{c}}(\Omega))}\aleq
C(\delta),\label{ts2?+}\\
&
\ep\|\nabla{r_n}\|^2_{L^2(Q_T)}  \aleq
C(\delta),\label{ts4?}\\
&
\|r_n|\vu_B\cdot\vc n|^{1/{\mathfrak{c}}} \|_{L^{\mathfrak{c}}((0,T)\times\partial\Omega)}\aleq  C(\delta),
\label{nova+}
\end{align}
where $r_n$ stands for $\vr_n$, $z_n$, $R_n$, $Z_n$, and
\begin{equation}\label{nova++}
 \ep\|\nabla(r_n^{{\mathfrak {c}/2}})\|^2_{L^2(Q_T)}\aleq C(\delta),\;\mbox{where $r_n=Z_n$ or $r_n=R_n$.}
\end{equation}

{
By  (\ref{ts2?}), 
\begin{equation}\label{ts8?}
\vv_n\rightharpoonup\vv\;\mbox{in $L^2(I;W_0^{1,2}(\Omega))$}.
\end{equation}
By virtue of (\ref{ts2?+}),} Arzela-Ascoli theorem combined with equation (\ref{E4}), estimates (to verify equi-continuity) and density of $C^\infty_c(\Omega)$ in $L^{{\mathfrak{c}}'}(\Omega)$,
$$
r_n\to r\;\mbox{in $C_{\rm weak}(\overline I;L^{\mathfrak{c}}(\Omega))$, $r$ states for $\vr,z,R,Z$.}
$$
Consequently in particular- cf. (\ref{ts1?}),
$$
(\vr_n+z_n)\vu_n\rightharpoonup_* (\vr+z) \vu\;\mbox{in $L^\infty(I;L^{\frac{2\mathfrak{c}}{\mathfrak{c }+1}}(\Omega))$},\; \vu=\vv+\vu_B.
$$
By the similar Arzela-Ascoli argument as above, but now with the momentum equation (\ref{E5}) one gets
first,
$$
(\vr_n+z_n)\vu_n\to (\vr+z)\vu\;\mbox{in $C_{\rm weak}( I;L^{\frac{2\mathfrak{c}}{\mathfrak{c }+1}}(\Omega))$}
$$
and then
$$
(\vr_n+z_n)\vu_n\otimes\vu_n\to(\vr+z) \vu\otimes\vu\;\mbox{in $L^2(\overline I;L^{\frac{6\mathfrak{c}}{4\mathfrak{c }+3}}(\Omega))$}.
$$

{
By  (\ref{ts4?}),}
$$
\Grad r_n\rightharpoonup\Grad r\;\mbox{ in $L^2(I\times\Omega)$, $r=\vr$ or $z$ or $R$ or $Z$.} 
$$
and by (\ref{nova+}), for the traces of the same $r$'s,
$$
r_n\rightharpoonup r\;\mbox{in $L^{\mathfrak{c}}(I;L^{\mathfrak{c}}(\partial\Omega;|\vu_B\cdot{\vc n}|{\rm d}S_x))$}
$$

We realize that (\ref{E4}) rewrites in the form
\begin{equation}\label{A0}
\partial_t r_n={\cal F}(r_n,\vu_n)\;\mbox{in $L^2(I;[W^{1,2}(\Omega)]^*))$},
\end{equation}
where
$$
<{\cal F}(r,\vu),\varphi>=
\int_0^T \intO{ \left[ r \vu\cdot\Grad\varphi  - \ep \Grad r \cdot \Grad \varphi \right] }
\dt
$$
$$
- \int_0^\tau \int_{\partial \Omega} \varphi r \vu_B \cdot \vc{n} \ {\rm d} S_x \dt 
+ 
\int_0^\tau \int_{\partial \Omega} \varphi (r - r_B) [\vu_B \cdot \vc{n}]^{-}  \ {\rm d}S_x \dt.
$$
Consequently, we deduce from the estimates, that
\begin{equation}\label{nova!}
\|\partial_t r_n\|_{L^{2}(I;[W^{1,2}(\Omega)]^*)}\aleq C(\ep,\delta).
\end{equation}
In view of (\ref{A0}) and (\ref{nova!}), we find in the limit
\begin{equation}\label{E4!!}
\partial_t r={\cal F}(r,\vu)\;\mbox{in $L^{2}(I;[W^{1,2}(\Omega)]^*))$}.
\end{equation}

By virtue of (\ref{ts4?}), (\ref{nova!}) and Lions-Aubin lemma, for $r=R$ or $Z$,
$$
r_n\to r\;\mbox{a.e. in $I\times\Omega$};\;\mbox{whence}\;r_n\to r\in L^q(I\times\Omega)\;\mbox{with some $q>\mathfrak{c}$},
$$
where we have used (\ref{ts2?+}) and (\ref{nova++}) and interpolation. Consequently,
$$
P_\delta(R_n,Z_n)\to P_\delta(R,Z)\;\mbox{in $L^q(I\times\Omega)$ with some $q>1$}. 
$$

With what we derived so far in this section, we are at the point to be able to pass to the limit $n\to\infty$ in the parabolic equations (\ref{E1}--\ref{E3}).
In order to pass to the limit in the momentum equation (\ref{E5}), it remains to prove strong convergence of $\Grad(\vr_n+z_n)$
and consequently,
\begin{equation}\label{epS}
\Grad(\vr_n+z_n)\cdot\Grad\vu_n\rightharpoonup \Grad(\vr+z)\cdot\Grad\vu\;\mbox{in ${\cal D}'(I\times \Omega)$}.
\end{equation}
We shall postpone this point to Section \ref{SE4.3}. Indeed, to do this, we shall need to derive first the renormalized equation (\ref{eq5.2!!}) 
for the limiting couple $(r,\vu)$.\footnote{{ In \cite{ChJNo}, the authors obtained the strong convergence of $\Grad\Sigma_n$, $\Sigma_n=\vr_n+z_n$ directly by deriving the $L^{4/3}$-bound for the  second derivatives 
of $\Sigma_n$ via the maximal parabolic regularity -- at cost of adding the additional dissipation $\ep{\rm div}(|\Grad\vc v|^2\Grad \vc v)$ to the approximated momentum equation (\ref{E4}). This bound was used also for the derivation of the renormalized identities of type (\ref{renoep}). In addition to the non homogenous boundary data, the present approach generalizes
\cite{ChJNo} in several directions: 1) There is no need of the additional dissipation in the approximations of the momentum equation. 2)There is no need
of $C^2$ boundary in order to derive the renormalized equations.}}
}

\subsection{Limit in the energy inequality}\label{SEi}
We have,
$$
\frac 1\delta\int_\tau^{\tau+\delta}\intO{(\vr_n+z_n)\vv_n^2}{\rm d}t\to\frac 1\delta\int_\tau^{\tau+\delta}\intO{(\vr+z)\vv^2}{\rm d}t
$$
$$
\mbox{with
$\lim_{\delta\to 0+}\frac 1\delta\int_\tau^{\tau+\delta}\intO{(\vr+z)\vv^2}{\rm d}t=\intO{(\vr+z)\vv^2(\tau)}$ for a.a. $\tau\in (0,T)$}
$$
where the latter conclusion is a consequence of  the theorem on Lebesgue points.

We further use lower weak semi-continuity of convex functionals, as follows:
$$
\forall\tau\in\overline I,\;
\intO{H_\delta(R,Z)(\tau)}\le\liminf_{n\to\infty} \intO{H_\delta(R_n,Z_n)(\tau)},
$$
$$
 \int_0^\tau\int_{\Gamma^{\rm out}}H_\delta(R,Z)\vu_B\cdot\vc n
{\rm d} S_x{\rm d}t
\le\liminf_{n\to\infty} \int_0^\tau\int_{\Gamma^{\rm out}}H_\delta(R_n,Z_n) \vu_B\cdot\vc n
{\rm d} S_x{\rm d}t,
$$
{ 
$$
\int_0^\tau\intO{|\Grad\vv|^2}{\rm d}t\le \liminf_{n\to\infty} \int_0^\tau\intO{|\Grad\vv_n|^2}{\rm d}t.
$$
}

At this point, we can deduce from the energy inequality (\ref{E7}) the energy inequality (\ref{E7!}) in Proposition \ref{EP2}.

{ \subsection{Renormalization}\label{SE4.3}}
In the sequel, we shall work with equation (\ref{E4!!}). To begin, we take $B\in C^2_c((0,\infty))$.
As $\partial_t r \in L^{2}(0,T; [W^{1,2}(\Omega)]^*)$,  $B'(r) \in L^2(0,T; W^{1,2}(\Omega))$ 
we deduce from this formulation 
\begin{equation}\label{renc}
\begin{split}
\int_{\tau_1}^{\tau_2} &\left< \partial_t r, B'(r)\varphi \right>_{[W^{1,2}]^*; W^{1,2}} \dt\\ 
&=  \int_{\tau_1}^{\tau_2} \intO{ r \vu\cdot\Grad( B'(r)\varphi) } \dt - 
\ep \int_{\tau_1}^{\tau_2} \intO{\Big[ |\Grad r|^2  B''(r)\varphi+B'(r)\Grad r\cdot\Grad\varphi\Big] } \dt  
\\ & 
-\int_{\tau_1}^{\tau_2} \int_{\partial \Omega}\varphi r B'(r)\vu_B\cdot\vc n {\rm d} S_x \ \dt
+ \int_{\tau_1}^{\tau_2} \int_{\partial \Omega} \varphi B'(r) (r - r_B) [\vu_B \cdot \vc{n}]^- \ {\rm d} S_x \ \dt,
\end{split}
\end{equation}
$0<\tau_1<\tau_2<T$, where $\varphi\in C^1(\overline\Omega)$.

Using the standard temporal regularization via a family of $t-$dependent convolution kernels, we find a sequence of functions 
{
$$
r_n \in C^1([\tau_1, \tau_2]; W^{1,2}(\Omega)), \
r_n \to r \ \mbox{in}\ L^2(\tau_1, \tau_2; W^{1,2}(\Omega)),\ 
 \partial_t r_n \to \partial_t r  
\ \mbox{in}\ L^2(\tau_1, \tau_2; [W^{1,2}(\Omega)]^*) 
$$
}
for any $0 < \tau_1 < \tau_2$, and as $r \in C([0,T]; L^p(\Omega))$, $1\le p<\mathfrak{c}$, 
$$
B(r_n)(\tau) \to B(r(\tau)) \ \mbox{in $L^p(\Omega)$ for any}\ \tau \in (0,T).
$$
Thus, we obtain, 
$$
\intO{ B(r)\varphi } \Big|_{\tau_1}^{\tau_2} = 
\lim_{n \to \infty} \left[ \intO{ B(r_n)\varphi } \right]_{t = \tau_1}^{t = \tau_2}
= \lim_{n \to \infty} \int_{\tau_1}^{\tau_2} \intO{ B'(r_n) \partial_t r_n \varphi} \dt 
= \int_{\tau_1}^{\tau_2} \left< { \partial_t r}, B'(r)\varphi \right> \dt
$$
and
$$
\int_{\tau_1}^{\tau_2} \intO{r\vu\cdot\Grad(B'(r)\varphi)}\dt=
\lim_{n\to 0} \int_{\tau_1}^{\tau_2} \intO{r_n\vu\cdot\Grad(B'(r_n)\varphi)}\dt
$$
$$
{ =
-\lim_{n\to 0} \int_{\tau_1}^{\tau_2} \intO{\Big[\varphi\Div(B(r_n)\vu)+(r_nB'(r_n)-B(r_n))\Div\vu\Big]}\dt +\int_{\tau_1}^{\tau_2} \int_{\partial\Omega}\varphi r_nB'(r_n)\vu_B\cdot\vc n
{\rm d}S_x\dt
}
$$
$$
=
{ \lim_{n\to 0}\Big( \int_{\tau_1}^{\tau_2} \intO{\Big[B(r_n)\vu\cdot\Grad\varphi-\varphi(r_nB'(r_n)-B(r_n))\Div\vu\Big]}\dt }
$$
{ 
$$
+\int_{\tau_1}^{\tau_2} \int_{\partial\Omega}\varphi\Big[-B(r_n)+r_nB'(r_n)\Big]\vu_B\cdot\vc n
{\rm d}S_x\dt
$$
$$
=
 \int_{\tau_1}^{\tau_2} \intO{\Big[B(r)\vu\cdot\Grad\varphi-\varphi (rB'(r)-B(r))\Div\vu\Big]}\dt 
+\int_{\tau_1}^{\tau_2} \int_{\partial\Omega}\varphi\Big[-B(r)+rB'(r)\Big]\vu_B\cdot\vc n
{\rm d}S_x\dt
$$
}
for any $0 < \tau_1 < \tau_2 < T$. 

Inserting the last two formulas to (\ref{renc}) yields,
$$
{ \intO{B(r)\varphi}\Big|_0^\tau=}
\int_0^\tau\intO{\Big[\Big(B(r)\vu-\ep B'(r)\Grad r\Big)\cdot
\Grad\varphi -\varphi\Big(\ep B''(r)|\Grad r|^2+(r B'(r)-B(r)){\rm div}\vu\Big)\Big]}{\rm d}t
$$
\begin{equation}\label{renoep!}
{ +}\int_0^\tau\int_{\partial\Omega}\varphi[\vu_B\cdot\vc n]^-
\Big(B(r_B)-B'(r)(r_B-r)- B(r)\Big){\rm d}S_x{\rm d}t 
\end{equation}
$$
-\int_0^\tau\int_{\partial\Omega}
\varphi[\vu_B\cdot\vc n]^- B(r_B){\rm d}S_x{\rm d}t-\int_0^\tau\int_{\partial\Omega}
\varphi[\vu_B\cdot\vc n]^+ B(r){\rm d}S_x{\rm d}t.
$$
Identity (\ref{renoep!}) can be extended to $B\in C^2([0,\infty))$ with growth 
\begin{equation}\label{grB}
|B(r)|\aleq (1+r)^{\frac 12\mathfrak{c}},\;
|B'(r)|\aleq (1+r)^{\frac 12\mathfrak{c}-1},\;B''(r)\aleq 1.
\end{equation}
Under these conditions, and if moreover $B$ is convex on $[0,\infty)$,
we deduce
\begin{equation}\label{renoepc}
\intO{B(r)\varphi}\Big|_0^\tau
+ \int_0^\tau\int_{\partial\Omega}
\varphi[\vu_B\cdot\vc n]^+ B(r){\rm d}S_x{\rm d}t
\end{equation}
$$
\le
\int_0^\tau\intO{\Big(B(r)\vu\cdot\Grad\varphi-\varphi(r B'(r)-B(r)){\rm div}\vu\Big)}{\rm d}t 
$$
$$
-\int_0^\tau\int_{\partial\Omega}
\varphi[\vu_B\cdot\vc n]^- B(r_B){\rm d}S_x{\rm d}t
- \ep\int_0^\tau\intO{ B'(r)\Grad r\cdot
\Grad\varphi}{\rm d }t
$$
with any $\tau\in [0,T]$ and any non negative $\varphi\in C^1(\overline\Omega)$.

Inequality (\ref{renoepc}) is generalizable to renormalizing functions of several variables and several parabolic equations (\ref{E1}--\ref{E3}) with the same $\vu$.
In particular, if $B\in C^2([0,\infty)^2)$ is convex  with the  growth
\begin{equation}\label{grB+}
|B(R,Z)|\aleq (1+\sqrt{R^2+Z^2})^{\frac 12\mathfrak{c}},\;
|\nabla_{R,Z}B(R,Z)|\aleq (1+\sqrt{R^2+Z^2})^{\frac 12\mathfrak{c}-1},\;|\nabla^2_{R,Z}(R,Z)|\aleq 1,
\end{equation}
then
\begin{equation}\label{reno+}
\intO{B(R,Z)}\Big|_0^\tau + \int_0^\tau\int_{\partial\Omega}
\varphi[\vu_B\cdot\vc n]^+ B(R,Z){\rm d}S_x{\rm d}t
\end{equation}
$$
\le
\int_0^\tau\intO{\Big[B(R,Z)\vu\cdot\nabla\varphi- \varphi{\Div\vu}\Big(R\partial_R B+Z\partial_Z B-B\Big)(R,Z)\Big]}{\rm d}t
$$
$$
- \int_0^\tau\int_{\partial\Omega}[\vu_B\cdot\vc n]^- B(R_B,Z_B){\rm d}S_x{\rm d}t
-\ep\int_0^\tau\intO{\Big(\partial_R B(R,Z)\Grad R + \partial_Z B(R,Z)\Grad Z\Big)\cdot\Grad\varphi}{\rm d}t
$$
with any  $\tau\in\overline I$ and with any non negative $\varphi \in C^2(\overline\Omega)$.

We shall use (\ref{renoep!}--\ref{reno+}) in the following situations
\begin{enumerate}
\item If $B(r)=r^2$ and $\varphi=1$, we obtain the renoramlized identity
(\ref{eq5.2!!}).
\item If $B(r)=r\log (r+\mathfrak{a})$, $\mathfrak{a}>0$ we obtain the
renormalized inequality (\ref{renoep}). 

\item If $B(R,Z)=\frac {Z^2}{R+\mathfrak{a}}$,
$\mathfrak{a}>0$ in (\ref{reno+}), we obtain (\ref{renofrac}) after letting $\mathfrak{a}\to 0+$.
\end{enumerate}

\subsection{Limit in the momentum equation (end)}

{
Combining (\ref{eq5.2}) { with} (\ref{eq5.2!!}) together with the lower weak semicontinuity of norms, we deduce
$$
\|r(\tau)\|^2_{L^2(\Omega)} + 2\varepsilon \int_0^\tau \|\Grad r\|_{L^2(\Omega)}^2{\rm d}t  
-\int_0^\tau\int_{\Gamma^{\rm in}}\vu_B\cdot\vc n (r-r_B)^2{\rm d}S_x{\rm d}t
+ \int_0^\tau\int_{\Gamma^{\rm out}}r^2\vu_B\cdot\vc n {\rm d}S_x{\rm d}t
$$
$$
\le\liminf_{n\to\infty}\Big(
\|r_n(\tau)\|^2_{L^2(\Omega)} + 2\varepsilon \int_0^\tau \|\Grad r_n\|_{L^2(\Omega)}^2{\rm d}t  
-\int_0^\tau\int_{\Gamma^{\rm in}}\vu_B\cdot\vc n (r-r_B)^2{\rm d}S_x{\rm d}t
+ \int_0^\tau\int_{\Gamma^{\rm out}}r^2\vu_B\cdot\vc n {\rm d}S_x{\rm d}t\Big)
$$
$$
\le
\|r(\tau)\|^2_{L^2(\Omega)} + 2\varepsilon \int_0^\tau \|\Grad r\|_{L^2(\Omega)}^2{\rm d}t  
-\int_0^\tau\int_{\Gamma^{\rm in}}\vu_B\cdot\vc n (r-r_B)^2{\rm d}S_x{\rm d}t
+ \int_0^\tau\int_{\Gamma^{\rm out}}r^2\vu_B\cdot\vc n {\rm d}S_x{\rm d}t,
$$
where $r$ stands for $\vr$, $z$, $R$, $Z$, and any of its linear combinations; 
whence, in particular,
$$
\Grad(\vr_n+z_n)\to\Grad(\vr+z)\;\mbox{in $L^2(I\times\Omega)$}.
$$
This in combination with (\ref{ts8?}) justifies convergence (\ref{epS}) and ends the proof of the convergence
in the momentum equation, yielding (\ref{E5!}).
}

\subsection{Summary of limit passage from level I to level II}

We resume the results of Section \ref{Sn}.

\begin{prop}{\rm [Approximate solutions, level II]} \label{EP2}
Let $\Omega \subset \R^3$ be a bounded Lipschitz domain.  
Let the data $(\vr_B,z_B,R_B,Z_B,\vu_B)$, $(\vr_0,z_0,R_0,Z_0,\vu_0)$  belong to the class (\ref{ruB}--\ref{calO}),
(\ref{initr}). Suppose that $P$ satisfies assumptions (\ref{regP}--\ref{convH}).

Then for each fixed $\delta > 0$, $\ep > 0$,  { there exists a quintet
$(\vr,z,R,Z,\vu=\vv+\vu_B)=(\vr_\ep,z_\ep,R_\ep,Z_\ep,\vu_\ep=\vv_\ep+\vu_B,)$ 
}
with the following properties:
\begin{enumerate}
\item It belongs to the function spaces
\begin{equation}\label{fsw}
r \in C_{\rm weak} (\overline I; L^{\mathfrak{c}}(\Omega))\cap L^2(0,T; W^{1,2}(\Omega))\cap L^{\mathfrak{c}}(I;L^{\mathfrak{c}}(\partial\Omega,
|\vu_B\cdot\vc n|{\rm d}S_x)),
\end{equation}
$$
{ \vv:=\vu-\vu_B \in L^2(0,T;W_0^{1,2}(\Omega; \R^3)),}
$$
$$
(\vr+z)\vv\in C_{\rm weak}(\overline I;L^{\frac{2{\mathfrak{c}}}{\mathfrak{c}+1}}(\Omega)),\;(\vr+z)\vv^2, P(R,Z)\in L^\infty(I;L^1(\Omega)).
$$
\item It obeys the following domination  inequalities
{
\begin{equation} \label{eq5.1!}
\begin{array}{c}
\mbox{$\forall t \in \overline I$},\; R_\ep(t,x)\ge 0,\;Z_\ep(t,x)\ge 0,\;
 \underline a R_\ep(t,x)\le
Z_\ep(t,x)\le\overline a R_\ep(t,x),\\
\underline F R_\ep(t,x)\le
\vr_\ep(t,x)\le\overline F R_\ep(t,x),\; \underline G R_\ep(t,x)\le
z_\ep(t,x)\le \overline F Z_\ep(t,x)\;\mbox{ for a.a. $x\in\Omega$,}
\\ \\
\mbox{for a.a. $(t,x)\in I\times\partial\Omega$},\;
R_\ep(t,x)\geq 0,\;Z_\ep(t,x)\geq 0, \; \underline a R_\ep(t,x) \leq Z_\ep(t,x) \leq \overline a R_\ep(t,x),\\
\underline F R_\ep(t,x)\le
\vr_\ep(t,x)\le\overline F R_\ep(t,x),\; \underline G R_\ep(t,x)\le
z_\ep(t,x)\le \overline F Z_\ep(t,x).
\end{array}
\end{equation}
}
\item Each of 
\begin{equation}\label{E4!+}
\mbox{
$\vr_\ep, z_\ep, Z_\ep, R_\ep$ satisfies the weak formulations 
(\ref{E4}) and (\ref{E4!!})
of parabolic equation with $\vu=\vu_\ep$.
}
\end{equation}
\item { The momentum equation 
\begin{equation} \label{E5!}
 \intO{ (\vr_\ep+z_\ep) \vu_\ep \cdot \bfphi } \Big|_{0}^{\tau} = 
\int_0^\tau \int_\Omega \Big[ (\vr_\ep+z_\ep) \vu_\ep \cdot \partial_t \bfphi + (\vr_\ep+z_\ep) \vu_\ep \otimes \vu_\ep : \Grad \bfphi 
+ P_\delta(R_\ep,Z_\ep) \Div \bfphi
 \end{equation}
$$
- \mathbb{S}(\nabla\vu_\ep): \Grad \bfphi
-\ep\Grad (\vr_\ep+z_\ep) \cdot \Grad \vu_\ep \cdot \bfphi\Big] {\rm d}x \dt
$$
for any $\bfphi \in C_c^1([0,T]\times\Omega)$. 
} 

\item The  energy inequality 
\begin{equation} \label{E7!}
\begin{split}
&\intO{\left[ \frac{1}{2} (\vr_\ep+z_\ep) |\vv_\ep|^2 + { H}_\delta(R_\ep,Z_\ep)  \right] } \Big|_{ 0}^{ \tau} + 
\int_0^\tau \intO{\mathbb{S}(\Grad\vu_\ep):\Grad\vu_\ep } \dt \\
&{ +\int_0^\tau \int_{\Gamma^{\rm out}} { H}_\delta(R_\ep,Z_\ep)  \vu_B \cdot \vc{n} \ {\rm d} S_x \dt}
\leq
- 
\int_0^\tau \intO{ \left[ (\vr_\ep+z_\ep) \vu_\ep \otimes \vu_\ep + P_\delta(R_\ep,Z_\ep) \mathbb{I} \right]  :  \Grad \vu_B } \dt \\
&+ \int_0^\tau { \intO{ (\vr_\ep+z_\ep) \vu_\ep  \cdot\Grad \vu_B \cdot  \vu_B  } }
\dt 
+ \int_0^\tau \intO{ \mathbb{S}(\Grad\vu_\ep) : \Grad \vu_B } \dt 
\\
&- \int_0^\tau \int_{\Gamma^{\rm in}} { H}_\delta(R_B, Z_B)  \vu_B \cdot \vc{n} \ {\rm d} S_x \dt  
\end{split}
\end{equation}
holds for a.a. $\tau\in I$.
\item Renormalized identity
\begin{equation}\label{eq5.2!!}
\|r_\ep(\tau)\|^2_{L^2(\Omega)}-\|r_0\|^2_{L^2(\Omega)} + 2\varepsilon \int_0^\tau \|\Grad r_\ep\|_{L^2(\Omega)}^2{\rm d}t  
-\int_0^\tau\int_{\Gamma^{\rm in}}\vu_B\cdot\vc n (r_\ep-r_B)^2{\rm d}S_x{\rm d}t
\end{equation}
$$
+ \int_0^\tau\int_{\Gamma^{\rm out}}r_\ep^2\vu_B\cdot\vc n {\rm d}S_x{\rm d}t= 
-\int_0^\tau\int_{\Gamma^{\rm in}}r_B^2\vu_B\cdot\vc n {\rm d}S_x{\rm d}t
-\int_\Omega r_\ep^2\Div \vu_\ep\dx,
$$
where $r_\ep$ stands for any linear combination of $\vr_\ep,z_\ep,R_\ep,Z_\ep$
\item Renormalized inequality
\begin{equation}\label{renoep}
\intO{r_\ep\log (r_\ep+\mathfrak{a})(\tau,x)\varphi(x)}
-\intO{r_0\log (r_0+\mathfrak{a})(x)\varphi(x)}
+ \int_0^\tau\int_{\Gamma^{\rm out}}\varphi r_\ep\log (r_\ep+\mathfrak{a})\vu_B\cdot\vc n\varphi {\rm d}S_x{\rm d}t
\end{equation}
$$
\le
\int_0^\tau\intO{\Big(r_\ep\log (r_\ep+\mathfrak{a})\vu_\ep\cdot\Grad\varphi-\varphi \frac{ r^2_\ep}{r_\ep+\mathfrak{a}}{\rm div}\vu_\ep \Big)}{\rm d}t
$$
$$
-\int_0^\tau\int_{\Gamma^{\rm in}} \varphi r_B\log( r_B+\mathfrak{a})\vu_B\cdot\vc n\varphi {\rm d}S_x{\rm d}t
-\ep
\int_0^\tau\intO{\Big(\log(r_\ep+\mathfrak{a})+\frac {r_\ep}{r_\ep+\mathfrak{a}}\Big)
\Grad r_\ep\cdot\Grad\varphi}{\rm d}t
$$
holds for all $\tau\in \overline I$, $\mathfrak{a}>0$, and all
$0\le \varphi\in C^1(\overline\Omega)$, where $r_\ep$ stands for $\vr_\ep,z_\ep, R_\ep, Z_\ep$.
\item Renormalized inequality
\begin{equation}\label{renofrac}
\intO{R_\ep\mathfrak{s}^2_{\ep,\mathfrak{d}}(\tau,x)\varphi(\tau,x)}
+ \int_0^\tau\int_{\Gamma^{\rm out}}R_\ep s^2_{\ep,\mathfrak{d}}\vu_B\cdot\vc n \varphi
{\rm d}S_x{\rm d}t
\end{equation}
$$
\le\intO{R_0(x)\mathfrak{s}^2_{\mathfrak{d}}(0,x)\varphi(x)}-\int_0^\tau\int_{\Gamma^{\rm in}}R_B\mathfrak{s}^2_B
\vu_B\cdot\vc n \varphi{\rm d} S_x{\rm d}t
$$
$$
+\int_0^\tau\intO{R_\ep\mathfrak{s}_{\ep,\mathfrak{d}}\vu_\ep\cdot\Grad\varphi}
{\rm d}t -\ep \int_0^\tau\intO{\Big(2\mathfrak{s}_{\ep,\mathfrak{d}}\Grad Z_\ep-\mathfrak{s}^2_{\ep,\mathfrak{d}}\Grad R_\ep\Big)\cdot\Grad\varphi}
{\rm d}t
$$
holds for all $\tau\in\overline I$ with any non negative $\varphi\in C^1(\overline\Omega))$ and any $\mathfrak{d}\in \R$, where $\mathfrak{s}_B(x)= \frac{Z_B(x)}{R_B(x)}$ and $\mathfrak{s}_{\mathfrak{d}}$ is defined in (\ref{renofrac+}).
\end{enumerate}
\end{prop}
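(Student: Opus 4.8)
The plan is to pass to the limit $n\to\infty$ in Proposition \ref{EP1}, keeping $\delta,\ep>0$ fixed. First I would extract from the domination inequalities (\ref{eq5.1}), the approximate energy inequality (\ref{E7}) and the renormalized identity (\ref{eq5.2}) the $n$-uniform bounds (\ref{eq5.1+})--(\ref{nova++}): these control $(\vr_n+z_n)|\vv_n|^2$ in $L^\infty(I;L^1)$, $\vv_n$ in $L^2(I;W^{1,2}_0)$, $r_n$ in $L^\infty(I;L^{\mathfrak{c}})$, $\sqrt\ep\,\nabla r_n$ in $L^2(Q_T)$, the boundary traces via (\ref{nova+}), and $\nabla(r_n^{\mathfrak{c}/2})$ via (\ref{nova++}). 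Rewriting the parabolic weak formulation (\ref{E4}) as $\partial_t r_n={\cal F}(r_n,\vu_n)$ in $L^2(I;[W^{1,2}(\Omega)]^*)$ yields, in addition, a uniform bound on $\partial_t r_n$. From these bounds I would select (non-relabelled) weakly and weakly-$*$ convergent subsequences; an Arzela--Ascoli argument based on (\ref{E4}) gives $r_n\to r$ in $C_{\rm weak}(\overline I;L^{\mathfrak{c}}(\Omega))$, while the Aubin--Lions lemma together with the bounds on $\nabla R_n,\nabla Z_n$ gives $R_n\to R$, $Z_n\to Z$ a.e. in $Q_T$ and, by interpolation with (\ref{nova++}), strongly in some $L^q(Q_T)$, $q>1$, so that $P_\delta(R_n,Z_n)\to P_\delta(R,Z)$ in $L^q(Q_T)$. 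A further Arzela--Ascoli argument applied to the momentum equation (\ref{E5}) gives $(\vr_n+z_n)\vu_n\to(\vr+z)\vu$ in $C_{\rm weak}$ and hence $(\vr_n+z_n)\vu_n\otimes\vu_n\to(\vr+z)\vu\otimes\vu$ in $L^2(I;L^{6\mathfrak{c}/(4\mathfrak{c}+3)}(\Omega))$. This already allows the passage to the limit in the parabolic equations, yielding (\ref{E4!!}), and in every term of (\ref{E5}) except the artificial-diffusion term $\ep\nabla(\vr_n+z_n)\cdot\nabla\vu_n$.

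\emph{The main obstacle}---and the reason Section \ref{SE4.3} is inserted before the conclusion---is to upgrade the weak convergence $\nabla(\vr_n+z_n)\rightharpoonup\nabla(\vr+z)$ in $L^2(Q_T)$ to \emph{strong} convergence, which is exactly what legitimizes (\ref{epS}). I would first derive the renormalized equation (\ref{E4!!}) for the limit couple $(r,\vu)$: testing $\partial_t r={\cal F}(r,\vu)$ with $B'(r)\varphi$, regularizing in time via a family of $t$-convolution kernels so that $r$ is replaced by functions in $C^1([\tau_1,\tau_2];W^{1,2}(\Omega))$, and passing to the limit in the regularization using $r\in C([0,T];L^p(\Omega))$, $1\le p<\mathfrak{c}$, one arrives at the renormalized identity (\ref{renoep!}) for $B\in C^2([0,\infty))$ with the growth (\ref{grB}); the multivariable version (\ref{reno+}) follows by the same computation applied to the two parabolic equations for $R$ and $Z$ with common transporting field $\vu$. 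Choosing $B(r)=r^2$ and $\varphi=1$ produces the limit renormalized identity (\ref{eq5.2!!}). Comparing (\ref{eq5.2!!}) with the $\liminf_n$ of (\ref{eq5.2})---whose boundary and $\Div\vu_n$ terms pass by the convergences already at hand, while the gradient-$L^2$-norm terms are only weakly lower semicontinuous---sandwiches $\liminf_n\ep\int_0^\tau\|\nabla r_n\|_{L^2(\Omega)}^2\,{\rm d}t$ between $\ep\int_0^\tau\|\nabla r\|_{L^2(\Omega)}^2\,{\rm d}t$ and itself, forcing $\nabla r_n\to\nabla r$ in $L^2(Q_T)$ for $r=\vr+z$. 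Together with (\ref{ts8?}) this justifies (\ref{epS}) and gives the limit momentum equation (\ref{E5!}) after a density argument in the test functions.

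It then remains to pass to the limit in the energy inequality and to record the remaining renormalized relations. For (\ref{E7!}) I would invoke the theorem on Lebesgue points for the kinetic-energy term together with the weak lower semicontinuity of the convex functionals $\int_\Omega H_\delta(R,Z)\,{\rm d}x$, $\int_{\Gamma^{\rm out}}H_\delta(R,Z)\,\vu_B\cdot\vc{n}\,{\rm d}S_x$ and $\int_0^\tau\int_\Omega|\nabla\vv|^2\,{\rm d}x\,{\rm d}t$, discarding on the left the non-negative $\ep$-Hessian term; the right-hand side passes to the limit by the strong and weak convergences of densities, momenta and pressure established above. For the renormalized inequality (\ref{renoep}) I would apply (\ref{renoep!}) with the convex choice $B(r)=r\log(r+\mathfrak{a})$, admissible under (\ref{grB}) since $\mathfrak{c}>9/2$, keeping only the favourable sign of the boundary Young-type remainder. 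Finally, (\ref{renofrac}) follows from (\ref{reno+}) with the convex function $B(R,Z)=Z^2/(R+\mathfrak{a})$, followed by the passage $\mathfrak{a}\to 0+$ using the domination $Z_\ep\le\overline a R_\ep$ and dominated convergence, which reproduces the fraction inequality with $\mathfrak{s}_{\ep,\mathfrak{d}}$ defined as in (\ref{renofrac+}); the function-space membership (\ref{fsw}) is then read off from the uniform bounds and the $C_{\rm weak}$ and $C$ convergences.
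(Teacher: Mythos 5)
Your proposal is correct and follows the paper's proof essentially step by step: the same uniform bounds from Proposition~\ref{EP1}, the same Arzel\`a--Ascoli and Aubin--Lions compactness, the same temporal regularization and $B'(r)\varphi$-testing of (\ref{E4!!}) to obtain (\ref{renoep!}) and (\ref{reno+}), the same sandwich of $\liminf_n\ep\int\|\nabla r_n\|^2$ between (\ref{eq5.2!!}) and the limit of (\ref{eq5.2}) to get strong $L^2$ gradient convergence and hence (\ref{epS}), and the same convexity/Lebesgue-point arguments for (\ref{E7!}) together with the particular choices $B(r)=r\log(r+\mathfrak a)$ and $B(R,Z)=Z^2/(R+\mathfrak a)$ for (\ref{renoep}) and (\ref{renofrac}). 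One minor inaccuracy: the admissibility of $B(r)=r\log(r+\mathfrak a)$ under the growth condition (\ref{grB}) requires only $\mathfrak c>2$, not the threshold $\mathfrak c>9/2$ (which is imposed for the later commutator argument), but this does not affect the correctness of the proof.
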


\section{Limit from level II to level III { (limit $\ep\to 0$)}}
\label{Sep}
We deduce from (\ref{eq5.1!}), (\ref{E7!}) and (\ref{eq5.2!!}) readily the following uniform bounds with respect to $\ep\in (0,1)$:
\begin{equation}\label{ts1}
\|\vv_\ep\|_{L^2(I,W^{1,2}(\Omega ))}\aleq C(\delta),\;
\|(\vr_\ep+z_\ep)|\vu_\epsilon|^2\|_{L^\infty(I,L^1(\Omega ))}\aleq
 C(\delta),
\end{equation}
\begin{equation}\label{ts2}
\|r_\ep\|_{L^\infty(I,L^{\mathfrak{c}}(\Omega))}\le
C(\delta),\; \|r_\ep|\vu_B\cdot\vc n|^{1/{\mathfrak{c}}}\|_{L^{\mathfrak{c}}(I,L^{\mathfrak{c}}(\Omega))}\le
C(\delta),
\end{equation}
and $\ep$-dependent bounds
{
\begin{equation}\label{ts3}
\sqrt \ep \|\nabla r_\epsilon\|_{L^2( Q_T)}\aleq
C(\delta).
\end{equation}
}
In the above, $r$ stands for $\vr, z, Z, R$.

{ Estimate (\ref{ts2}) yields the pressure bounded solely in $L^\infty(I;L^1(\Omega))$. we can improve this estimate by taking
{
\begin{equation}\label{tsb}
\phi(t,x) = \psi(t)\mathcal{B} \left[\eta R(t) - \frac{1}{|\Omega|} \intO{\eta R(t)} \right](x),\ 0\le\psi\in C^\infty_c((I)),\ 0\le\eta\in  C^1_c(\Omega)
\end{equation}
}
as test function
the  momentum equation (\ref{E5!}), where ${\cal B}$ is the Bogovskii operator, cf. Lemma \ref{LBog},
\begin{equation}\label{Z4}
\int_0^T \psi\intO{ P_\delta(R,Z) R \eta}\dt = \int_0^T \Big[\psi\frac{1}{|\Omega|} \intO{ R\eta } \intO{ P_\delta(R,Z) }\Big]{ \dt} 
\end{equation}
$$
 -  \int_0^T \psi\intO{  (\vr(t)+z(t)){\vu}( t ) \cdot \mathcal{B} \Big[\partial_t \Big(\eta R(t)- \frac{1}{|\Omega|} \intO{\eta R(t)}\Big) \Big] } \ \dt  
$$
$$
- \int_0^T \psi'\intO{(\vr+z){\vu}( t ) \cdot \mathcal{B} \Big[\eta R(t)- \frac{1}{|\Omega|} \intO{\eta R(t)}\Big) \Big] } \ \dt
$$
$$
- \int_0^T \psi\intO{ ((\vr+z) {\vu} \otimes \vu) : \Grad  \mathcal{B} \left[ R - \frac{1}{|\Omega|} \intO{ R } \right]  } \ \dt 
$$
$$
+ \int_0^T \psi\intO{  \mathbb{S}(\Grad \vu) : \Grad \ \mathcal{B} \left[ \eta R- \frac{1}{|\Omega|} \intO{\eta R} \right]} \dt.
$$ 
}
{
In the above,\footnote{{ Strictly speaking
we should take in (\ref{tsb}) the time (or space) mollification
of $\eta R$, in order to be able to justify the indentity
$\partial_t {\mathcal B} \Big[\Big(\eta R(t)- \frac{1}{|\Omega|} \intO{\eta R(t)}\Big) \Big]= {\cal B}\Big[\partial_t \Big(\eta R(t)- \frac{1}{|\Omega|} \intO{\eta R(t)}\Big) \Big]
$
in $L^p(Q_T)$. This is however standard, see \cite[Section 2.2.5]{FeNoB}.}}
$$
\partial_t( R\eta)={\mathcal F}_\eta\in L^2(I; [W^{1,2}(\Omega)]^*)\ \;
<{\cal F}_\eta;\varphi>=<{\cal F}(R,\vu),\phi\eta>,\ \mbox{cf. (\ref{A0})}.
$$
}
Seeing (\ref{ts1}-\ref{ts3}), we easily verify the bounds { (chosing properly the sequence of $\psi$'s),}
$$
\|\partial_t R_\ep\|_{L^2(I; [W^{1,2}(\Omega)]^*))}\aleq C(\delta,
{\rm supp}\eta)
),\; \mbox{whence}\; \|\mathcal{B} [\partial_t{ R_\ep(t) }] \|_{L^2(Q_T)}
\aleq C(\delta, {\rm supp\eta}),
$$
where we have used (\ref{Z3}) { in Lemma \ref{LBog}}. In view of the last estimate, estimates (\ref{ts1}--\ref{ts3}) and in view of Lemma \ref{LBog},
we easily verify that the right hand side of the identity (\ref{Z4}) is bounded.

Seeing the structural hypotheses (\ref{grP})
we deduce from the above and the domination inequalities (\ref{eq5.1!}) that,
\begin{equation}\label{Bogep}
\| r_\ep\|_{L^{\mathfrak{c}+1}(I\times K)}\aleq C(\delta,K),\;r\;\mbox{stands for}\;\vr, z, R, Z.
\end{equation}
and 
\begin{equation}\label{PBogep}
\| P_\delta(R_\ep,Z_\ep)\|_{L^{\frac{\mathfrak{c}+1}{\mathfrak c}}(I\times K)}\aleq C(\delta,K)
\end{equation}
with any compact $K\subset\Omega$.

{
In absence of improved estimates of the pressure up to the boundary, we shall need at least its  equi-integrability near the boundary, in order
to pass to the limit  in the energy inequality in the term containing $P(R_\ep, Z_\ep){\rm div}\vu_B$. To this end we recall
the following lemma (see \cite[Lemma 6.1]{KwNo}),
\begin{lem}\label{localp}
Let $\Omega$ be a bounded Lipschitz domain. Denote by
$U_{-,h}:=\{x\in\Omega|{\rm dist}(x,\R^3\setminus\Omega\}$ its
inner neighborhood.
Consider
a sequence
$
(p_\ep,\vc z_\ep,\vc F_\ep,\tn G_\ep)_{\ep>0}
$
of functions  which satisfy equation
\begin{equation}\label{distr}
\partial_t\vc z_\ep+\vc F_\ep+{\rm div}\tn G_\ep +\Grad p_\ep=0\;\mbox{in ${\cal D}'(Q_T;\R^d)$}.
\end{equation}
Suppose finally that 
$$
(z_\ep,\vc F_\ep,\tn G_\ep,p_\ep)_{\ep>0}\; \mbox{is bounded in $L^\infty(0,T;L^\alpha(\Omega))\times L^\kappa(Q_T)\times L^\kappa(Q_T)\times L^1(Q_T)$ by $k>0$}
$$
uniformly with respect to $\ep$.

Then there exists $h_0>0$ and $c=c(k,T,\Omega)>0$ such that
\begin{equation}\label{localpe}
\int_0^T\int_{U-,h}{ p_\ep}{\rm d}x{\rm d}t\le ch^{\Gamma},\;\Gamma=\min\{1/\alpha', 1/\kappa'\},
\end{equation}
uniformly with respect to $\ep$. 
\end{lem}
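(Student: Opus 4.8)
The plan is to prove the estimate (\ref{localpe}) by testing the distributional identity (\ref{distr}) against a time-localized Bogovskii vector field built from the indicator of the boundary strip $U_{-,h}$, and then to read off the power of $h$ from the mapping properties of the Bogovskii operator $\mathcal{B}$ (Lemma \ref{LBog}) together with the uniform bounds. First I would fix $h_0>0$ so small that $U_{-,h}$ is a genuine inner boundary layer of the Lipschitz domain $\Omega$ for $0<h\le h_0$, so that $|U_{-,h}|\aleq h$, and set
\[
g_h:=\mathcal{B}\Big[\mathbf{1}_{U_{-,h}}-\frac{|U_{-,h}|}{|\Omega|}\Big],\qquad \Div g_h=\mathbf{1}_{U_{-,h}}-\frac{|U_{-,h}|}{|\Omega|}.
\]
Since $\mathbf{1}_{U_{-,h}}-|U_{-,h}|/|\Omega|$ has zero mean on $\Omega$ and $\big\|\mathbf{1}_{U_{-,h}}-|U_{-,h}|/|\Omega|\big\|_{L^r(\Omega)}\aleq|U_{-,h}|^{1/r}\aleq h^{1/r}$, the continuity of $\mathcal{B}\colon L^r\to W^{1,r}_0(\Omega;\R^d)$ gives $g_h\in\bigcap_{1<r<\infty}W^{1,r}_0(\Omega;\R^d)$ and $\|g_h\|_{L^r(\Omega)}+\|\nabla g_h\|_{L^r(\Omega)}\aleq h^{1/r}$ for every $r\in(1,\infty)$.

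Next I would take $\psi\in C^1_c((0,T))$ with $0\le\psi\le1$ and insert $\bfphi(t,x)=\psi(t)g_h(x)$ into the weak form of (\ref{distr}). After a routine mollification/density step this is legitimate, because $\bfphi$ vanishes on $\partial\Omega$ and at $t=0,T$ and has exactly the integrability dual to that of the data ($\Div\bfphi\in L^\infty(Q_T)$, while $\bfphi,\nabla\bfphi,\partial_t\bfphi\in L^\infty(I;L^r(\Omega))$ for all $r<\infty$). This produces
\[
\begin{aligned}
\int_0^T\!\psi\!\int_{U_{-,h}}\!p_\ep\,\dx\dt
&=\frac{|U_{-,h}|}{|\Omega|}\int_0^T\!\psi\!\int_\Omega\!p_\ep\,\dx\dt-\int_0^T\!\psi'\!\int_\Omega\!\vc z_\ep\!\cdot\!g_h\,\dx\dt\\
&\quad+\int_0^T\!\psi\!\int_\Omega\!\vc F_\ep\!\cdot\!g_h\,\dx\dt-\int_0^T\!\psi\!\int_\Omega\!\tn G_\ep\!:\!\nabla g_h\,\dx\dt .
\end{aligned}
\]

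Because $p_\ep\ge0$, the left-hand side is nonnegative, so I would bound the four terms on the right by H\"older's inequality, taking $\psi=\psi_j\nearrow\mathbf{1}_{(0,T)}$ a family of trapezoidal cut-offs with $\|\psi_j\|_{L^\infty}\le1$ and $\|\psi_j'\|_{L^1(0,T)}\le2$: the first term is $\aleq h\,\|p_\ep\|_{L^1(Q_T)}\aleq kh$; the second is $\aleq\|\psi_j'\|_{L^1(0,T)}\|\vc z_\ep\|_{L^\infty(I;L^\alpha(\Omega))}\|g_h\|_{L^{\alpha'}(\Omega)}\aleq kh^{1/\alpha'}$; the last two are $\aleq T^{1/\kappa'}\big(\|\vc F_\ep\|_{L^\kappa(Q_T)}\|g_h\|_{L^{\kappa'}(\Omega)}+\|\tn G_\ep\|_{L^\kappa(Q_T)}\|\nabla g_h\|_{L^{\kappa'}(\Omega)}\big)\aleq kh^{1/\kappa'}$. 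Letting $j\to\infty$ on the left by monotone convergence and using $0<h\le h_0\le1$ together with $\Gamma=\min\{1/\alpha',1/\kappa'\}\le1$ (hence $h\le h^{\Gamma}$, $h^{1/\alpha'}\le h^{\Gamma}$, $h^{1/\kappa'}\le h^{\Gamma}$) would yield (\ref{localpe}) with $c=c(k,T,\Omega)$, uniformly in $\ep$.

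The main obstacle is structural rather than computational. First, one has to justify that the weak form of (\ref{distr}), valid a priori only for $\bfphi\in C^\infty_c(Q_T;\R^d)$, extends to the test field $\bfphi=\psi(t)g_h(x)$ with $g_h\in W^{1,r}_0(\Omega;\R^d)$; I would do this by mollifying $g_h$ in space (noting that $\psi$ is already smooth with compact support in time) and passing to the limit against the uniform bounds. Secondly, one has to handle the time endpoints, which is precisely why the argument is run with $\psi\in C^1_c(0,T)$ of controlled $\|\psi'\|_{L^1}$ and the limit $\psi\nearrow\mathbf{1}_{(0,T)}$ is taken only at the very end, rather than testing directly with $\psi\equiv1$. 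Granting these two standard points, the rest of the proof is merely H\"older's inequality together with the Bogovskii estimate of Lemma \ref{LBog}.
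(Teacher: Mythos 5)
Your proof is correct and follows precisely the approach the paper itself indicates (the paper cites \cite[Lemma 6.1]{KwNo} for the statement, but the footnote following its use and the test-function choice in Section~\ref{Sdel} reveal the intended argument): testing the distributional momentum identity against a time-cutoff multiplied by a Bogovskii vector field whose divergence localizes to the boundary strip $U_{-,h}$, then reading off the power of $h$ from H\"older and the $L^p\to W^{1,p}_0$ boundedness of $\mathcal{B}$. Your test field $\mathcal{B}[\mathbf{1}_{U_{-,h}}-|U_{-,h}|/|\Omega|]$ is just the negative of the paper's $\mathcal{B}[\mathbf{1}_{\Omega\setminus U_{-,h}}-(|\Omega|-|U_{-,h}|)/|\Omega|]$, so the two choices are identical up to sign, and your handling of the time endpoints via a family of trapezoidal cutoffs with $\|\psi_j'\|_{L^1}\le 2$ is the standard device; the monotone convergence step uses $p_\ep\ge 0$, which is not written in the hypotheses but is satisfied in the only situation where the lemma is invoked (and could anyway be replaced by dominated convergence using the $L^1(Q_T)$ bound on $p_\ep$).
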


We apply this lemma to the momentum equation (\ref{E5!}) and obtain, \footnote{ Alternatively, this estimate can be achieved directly by testing the momentum equation \eqref{distr} using test function $\psi{\cal B}\Big(1_{\Omega\setminus U_{-,h}}(x)-\frac{|\Omega|-|U_{-,h}|}{|\Omega|}\Big)$ and estimates \eqref{ts1}--\eqref{ts3},
in particular, the bound of $P(R_\ep,Z_\ep)$ in $L^\infty(I;L^1(\Omega))$.}
\begin{equation}\label{Best}
\| P_\delta(R_\ep,Z_\ep)\|_{L^{1}(I\times U_{-,h})}\aleq h^\Gamma,
\ \mbox{with some $\Gamma>0$} 
\end{equation} 
uniformly with respect to $\ep$.
}

The goal now is to pass to the limit $\ep\to 0+$ in Proposition \ref{EP2} and to get
the following result.

\begin{prop}{\rm [Approximate solutions, level III]} \label{EP3}

Let $\Omega \subset \R^3$ be a bounded Lipschitz domain satisfying (\ref{Om}).  
Let the data $\vr_B,z_B,R_B,Z_B,\vu_B)$, $(\vr_0,z_0,R_0,Z_0,\vu_0)$  belong to the class { (\ref{ruB}--\ref{calO})},
(\ref{initr}). Suppose that $P$ satisfies assumptions (\ref{regP}--\ref{convH}).

Then for each fixed $\delta > 0$  there exists a quinted
$(\vr,z,R,Z,\vu=\vv+\vu_B)=(\vr_\delta,z_\delta,R_\delta,Z_\delta,\vu_\delta=\vv_\delta+\vu_B)$ 
with the following properties:
\begin{enumerate}
\item It belongs to the function spaces
\begin{equation}\label{fsw+}
r \in C_{\rm weak} (\overline I; L^{\mathfrak{c}}(\Omega))\cap L^{\mathfrak{c}}(I;L^{\mathfrak{c}}(\Gamma^{\rm out},
|\vu_B\cdot\vc n|{\rm d}S_x)),
\end{equation}
$$
\vv:=\vu-\vu_B \in L^2(0,T;W_0^{1,2}(\Omega; \R^3)),\;
$$
$$
(\vr+z)\vv\in C_{\rm weak}(I;L^{\frac{2{\mathfrak{c}}}{\mathfrak{c}+1}}(\Omega)),\;(\vr+z)\vv^2, P(R,Z)\in L^\infty(I;L^1(\Omega)).
$$
\item Domination  inequalities:
\begin{equation} \label{eq5.1!++} 
{ \mbox{$\vr_\delta,z_\delta,R_\delta,Z_\delta$ satisfy domination inequalities (\ref{eq5.1!})}.}
\end{equation}
\item Each of 
\begin{equation}\label{E4!++}
\mbox{
$\vr_\delta, z_\delta, Z_\delta, R_\delta$ satisfies the weak formulation 
of the continuity equation  (\ref{eq2.7}) with $\vu=\vu_\delta$.
}
\end{equation}
\item The momentum equation 
\begin{equation} \label{E5!+}
 \intO{ (\vr_\delta+z_\delta) \vu_\delta \cdot \bfphi } \Big|_{0}^{\tau} 
 \end{equation}
 $$
= 
\int_0^\tau \intO{ \Big[ (\vr_\delta+z_\delta) \vu_\delta \cdot \partial_t \bfphi + (\vr_\delta+z_\delta) \vu_\delta \otimes \vu_\delta : \Grad \bfphi 
+ P_\delta(R_\delta,Z_\delta) \Div \bfphi - \mathbb{S}(\nabla\vu_\delta): \Grad \bfphi \Big] }{\rm d}t
$$
holds with any $\tau\in\overline I$, and any $\bfphi \in C^1([0,T]\times\Omega)$.
\item The  energy inequality 
\begin{equation} \label{E7+}
\begin{split}
&\intO{\Big( \frac{1}{2} (\vr_\delta+z_\delta) |\vv_\delta|^2 + { H}_\delta(R_\delta,Z_\delta)  \Big) } \Big|_{0}^{\tau} + 
\int_0^\tau \intO{\mathbb{S}(\Grad\vu_\delta):\Grad\vu_\delta } \dt + \\ 
&+\int_0^\tau \int_{\Gamma^{\rm out}} { H}_\delta(R_\delta,Z_\delta)  \vu_B \cdot \vc{n} \ {\rm d} S_x \dt
- 
\int_0^\tau \intO{ \left[ (\vr_\delta+z_\delta) \vu_\delta \otimes \vu_\delta + P_\delta(R_\delta,Z_\delta) \mathbb{I} \right]  :  \Grad \vu_B } \dt \\
&+ \int_0^\tau { \intO{ ({\vr}_\delta +z_\delta)\vu_\delta  \cdot\Grad \vu_B \cdot  \vu_B  } }
\dt 
+ \int_0^\tau \intO{ \mathbb{S}(\Grad\vu_\delta) : \Grad \vu_B } \dt 
\\
&- \int_0^\tau \int_{\Gamma^{\rm in}} { H}_\delta(R_B, Z_B)  \vu_B \cdot \vc{n} \ {\rm d} S_x \dt. 
\end{split}
\end{equation}
\item Renormalized equations
\begin{equation}\label{renoep+}
\intO{B(r_\delta(\tau))\varphi(x)}-\intO{B(r_0)\varphi(x)} + {\int_0^\tau\int_{\Gamma^{\rm out}} B(r_\delta)\vu_B\cdot\vc n\varphi {\rm d}S_x{\rm d}t}
\end{equation}
$$
=
\int_0^\tau\intO{\Big( B(r_\delta)\vu_\delta\cdot\Grad\varphi-{\cal Y}(r_\delta){\rm div}\vu\varphi\Big)}{\rm d}t
-\int_0^\tau\int_{\Gamma^{\rm in}} B(r_B)\vu_B\cdot\vc n\varphi {\rm d}S_x{\rm d}t,
$$
hold for all $\tau\in \overline I$ and all
$ \varphi\in C^1_c(\overline\Omega)$, where $r_\delta$ stands for $\vr_\delta,z_\delta,R_\delta,Z_\delta$.
In the above $(B,{\cal Y})$ are, in particular, the following couples
$$
\Big(B(s)=s^\theta, {\cal Y}(s)=(\theta-1)s^\theta\Big),\;
\Big(B(s)=s\log s, {\cal Y}(s)=s\Big),\;
\Big(B(s)={\cal L}_k(s), {\cal Y}(s)={\cal T}_k(s)\Big),
$$
where $\theta\in (0,\gamma/2]$, $k\ge 1$ and
\begin{equation}\label{Tk}
 {\cal T}_k(s)=k{\cal T}(s/k),\; {\cal T}\in C^1([0,\infty))\;{\cal T}(s)=
 \left\{\begin{array}{c}
         s\,\mbox{if $s\in [0,1]$}\\
         2\,\mbox{if $s\ge 3$}
        \end{array}
        \right\},\;\mbox{concave}
\end{equation}
is a truncation of the map $s\mapsto s$, while
\begin{equation}\label{Lk}
 {\cal L}_k(s)=s\int_1^s\frac{{\cal T}_k(t)}{t^2}{\rm d}t,\;\mbox{in particular, $s{\cal L}_k'(s)-{\cal L}_k(s)={\cal T}_k(s)$}
\end{equation}

\item Renormalized equation
\begin{equation}\label{renofrac!+}
\intO{R_\delta(\tau,x)\mathfrak{s}^2_{\delta,\mathfrak{d}}(\tau,x)\varphi(x)}+ \int_0^\tau\int_{\Gamma^{\rm out}}R\mathfrak{s}_{\mathfrak{d}}^2
\vu_B\cdot\vc n \varphi{\rm d} S_x{\rm d}t
=\intO{R_0(x)\mathfrak{s}^2_{\mathfrak{d}}(0,x)\varphi(x)}
\end{equation}
$$
-\int_0^\tau\int_{\Gamma^{\rm in}}R_B\mathfrak{s}^2_B
\vu_B\cdot\vc n \varphi{\rm d} S_x{\rm d}t
+\int_0^\tau\intO{R_\delta\mathfrak{s}^2_{\delta,\mathfrak{d}}\vu_\delta\cdot\Grad\varphi}
{\rm d}t 
$$
holds for all $\tau\in\overline I$ with any
$\varphi\in C^1(\overline\Omega)$ and any $\mathfrak{d}\in \R$, where $\mathfrak{s}_B(x)= \frac{Z_B(x)}{R_B(x)}$ and 
$\mathfrak{s}_{\mathfrak{d}}=\mathfrak{s}_{\delta,\mathfrak{d}}$ is defined in (\ref{renofrac+}).
\end{enumerate}
\end{prop}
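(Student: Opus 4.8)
The strategy is to let $\ep\to 0+$ in Proposition \ref{EP2}, for fixed $\delta>0$, along a suitable subsequence.
\emph{Extraction of limits and vanishing of the $\ep$-terms.} From the $\ep$-uniform bounds (\ref{ts1})--(\ref{ts2}), the Bogovskii-improved interior bounds (\ref{Bogep}), (\ref{PBogep}) and the near-boundary equi-integrability (\ref{Best}), I extract (not relabelled) subsequences: $\vv_\ep\rightharpoonup\vv$ in $L^2(I;W^{1,2}_0(\Omega;\R^3))$, and, by an Arzel\`a--Ascoli argument applied to the parabolic weak formulation (\ref{E4}) and to the momentum equation (\ref{E5!}), $r_\ep\to r$ in $C_{\rm weak}(\overline I;L^{\mathfrak c}(\Omega))$ for $r\in\{\vr,z,R,Z\}$, $(\vr_\ep+z_\ep)\vu_\ep\to(\vr+z)\vu$ in $C_{\rm weak}(\overline I;L^{2\mathfrak c/(\mathfrak c+1)}(\Omega))$, hence $(\vr_\ep+z_\ep)\vu_\ep\otimes\vu_\ep\rightharpoonup\overline{(\vr+z)\vu\otimes\vu}$ and $r_\ep\rightharpoonup r$ on $\Gamma^{\rm out}$ with weight $|\vu_B\cdot\vc n|$. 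Since $\sqrt\ep\,\Grad r_\ep$ is bounded in $L^2(Q_T)$, one has $\ep\Delta r_\ep\to0$ in $\mathcal D'(Q_T)$ and, writing $\ep\Grad(\vr_\ep+z_\ep)\cdot\Grad\vu_\ep=\sqrt\ep\,(\sqrt\ep\,\Grad(\vr_\ep+z_\ep))\cdot\Grad\vu_\ep\to0$ in $L^1(Q_T)$, the dissipative term disappears from the momentum equation. Thus each $r_\ep$ passes to a weak solution of the continuity equation (\ref{eq2.7}) with $\vu=\vu_\delta$, which gives (\ref{E4!++}) and the function spaces (\ref{fsw+}); the domination inequalities (\ref{eq5.1!++}) survive under weak-$*$ limits, and (\ref{E5!+}) will follow once the nonlinearities are identified.

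\emph{The pressure limit.} The crux is to identify the weak limit of $P_\delta(R_\ep,Z_\ep)$. First, I decouple the ratio: by Corollary \ref{L2}, for a fixed $\mathfrak d\in[0,\overline a]$ the quantities $s_\ep:=Z_\ep/_{\mathfrak d}R_\ep$ converge to $s:=Z/_{\mathfrak d}R$ in $C(\overline I;L^q(\Omega))$ for every $q<\infty$, with $\int_\Omega R_\ep|s_\ep-s|^2(\tau)+\int_0^\tau\!\int_{\Gamma^{\rm out}}R_\ep|s_\ep-s|^2\to0$, and $(s,\vu_\delta)$ solves the pure transport equation in the renormalized sense. This reduces the task to strong convergence of the dominating density $R_\ep$ on compact subsets of $\Omega$, with effective pressure $\Pi_\delta(R_\ep,t,x):=P_\delta(R_\ep,R_\ep s_\ep)$, for which the decomposition (\ref{Ps1}) yields $\Pi_\delta(R,t,x)=dR^\gamma+\delta(1+s^{\mathfrak c})R^{\mathfrak c}+\pi(R,t,x)$ with $R\mapsto\pi(R,t,x)$ non-decreasing. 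I then run the standard mono-fluid procedure: (i) derive the effective viscous flux identity by testing (\ref{E5!}) with functions built from $\Grad\Delta^{-1}$ applied to truncations $\mathcal T_k(R_\ep)$, localized by a cutoff $\eta\in C^1_c(\Omega)$, using the interior bounds (\ref{Bogep}), (\ref{PBogep}) and the $C_{\rm weak}$ compactness of $(\vr_\ep+z_\ep)\vu_\ep$; (ii) establish the renormalized continuity equation for the limit $(R_\delta,\vu_\delta)$ by invoking Proposition \ref{LP2}, and pass to the limit in the $\ep$-level renormalized inequality (\ref{renoep}) to obtain the renormalized inequality for the weak limit $\overline{R_\delta\log R_\delta}$; (iii) estimate the oscillation defect measure, i.e.\ bound $\sup_k\limsup_\ep\|\mathcal T_k(R_\ep)-\overline{\mathcal T_k(R_\delta)}\|_{L^{\gamma+1}(I\times K)}$, combine it with the effective flux identity and the monotonicity of $\pi$, and conclude $\overline{R_\delta\log R_\delta}=R_\delta\log R_\delta$, hence $R_\ep\to R_\delta$ in $L^1(I\times K)$ for every compact $K\subset\Omega$, so a.e.\ in $Q_T$. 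The domination inequalities (\ref{eq5.1!}) upgrade this to a.e.\ convergence of $\vr_\ep,z_\ep$, and $Z_\ep=R_\ep s_\ep$ converges a.e.\ as well; with the near-boundary equi-integrability (\ref{Best}) and Vitali's theorem, $P_\delta(R_\ep,Z_\ep)\to P_\delta(R_\delta,Z_\delta)$ in $L^1(Q_T)$.

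\emph{Passage in the remaining relations.} With the densities converging strongly, $(\vr_\ep+z_\ep)\vu_\ep\otimes\vu_\ep\rightharpoonup(\vr_\delta+z_\delta)\vu_\delta\otimes\vu_\delta$ (compactness of $(\vr_\ep+z_\ep)\vu_\ep$ against weak $L^2$ convergence of $\vu_\ep$), which yields the momentum equation (\ref{E5!+}); the time-integrated continuity equations (\ref{E4!++}) with the boundary terms follow in the form (\ref{eq2.7}). For the energy inequality (\ref{E7+}) I use the Lebesgue-point theorem for the kinetic energy, lower weak semicontinuity of the convex functionals $(R,Z)\mapsto H_\delta(R,Z)$ on $\Omega$ and on $\Gamma^{\rm out}$ with weight $\vu_B\cdot\vc n$ and of $\vv\mapsto\int|\Grad\vv|^2$, the strong pressure convergence together with the equi-integrability of Lemma \ref{localp} for the term $\int P_\delta(R_\ep,Z_\ep)\Div\vu_B$, and the nonnegativity of the $\ep$-dissipation $\ep\int\Grad^2_{R,Z}H_\delta(R_\ep,Z_\ep)[\Grad R_\ep,\Grad Z_\ep]\ge0$, which only helps. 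The renormalized equations (\ref{renoep+}) with the listed couples $(B,\mathcal Y)$ — the sublinear power $s^\theta$, $s\log s$, and the truncations $\mathcal L_k,\mathcal T_k$ of (\ref{Tk}),(\ref{Lk}) — follow from Proposition \ref{LP2} (together with Remark \ref{rinflow}) applied with $\vu=\vu_\delta$; and (\ref{renofrac!+}) follows from statement 3 of Proposition \ref{LP2}/Corollary \ref{L2} with $B(s^{(1)},s^{(2)})=(s^{(1)})^2$, $r=R_\delta$, $s=s_{\delta,\mathfrak d}$, after sending the regularizing parameter in the ratio to zero by dominated convergence.

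\emph{Main obstacle.} I expect the serious work to be the combination of steps (ii)--(iii) above: establishing the renormalized continuity equation for the limiting dominating density and controlling the oscillation defect measure \emph{in the presence of non-homogeneous inflow/outflow data}, where improved pressure estimates are available only on compact subsets of $\Omega$. This is precisely why the renormalized equation must be produced through the transport theory of Section \ref{TE} (Proposition \ref{LP2}) rather than directly, and why the boundary-layer contribution of the pressure has to be controlled separately via Lemma \ref{localp}; the almost-compactness of the ratio (Corollary \ref{L2}) is what makes the reduction to a single effective ``mono-fluid'' density feasible in the first place.
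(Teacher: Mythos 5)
Your proposal is essentially correct and reaches the same conclusions, but at one key point it takes a heavier route than the paper's own proof of the $\ep\to0$ passage. In the paper, Proposition \ref{p5} derives the effective viscous flux identity by testing with $\psi\,\phi\,\Grad\Delta^{-1}(R_\ep\phi)$, i.e.\ with the \emph{untruncated} density $R_\ep$, and the strong convergence of $R_\ep$ is then concluded from the direct $R\log R$ argument (equations (\ref{reno!*-}), (\ref{reno!*}) and Lemma \ref{Lemma4}), without any oscillation defect measure. This is possible precisely because at level II the densities enjoy $L^\infty(I;L^{\mathfrak c}(\Omega))$ and local $L^{\mathfrak c+1}$ bounds with $\mathfrak c>\max\{9/2,\beta,\gamma\}$, which is sufficient integrability for the Riesz-commutator argument in (\ref{ddd!}) to close with $R_\ep$ itself (the paper explicitly notes that this part requires $\mathfrak c>9/2$). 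You instead invoke truncations $\mathcal T_k(R_\ep)$ in the test function and bound the oscillation defect measure $\sup_k\limsup_\ep\|\mathcal T_k(R_\ep)-\overline{\mathcal T_k(R)}\|_{L^{\gamma+1}(I\times K)}$, which is precisely the machinery the paper reserves for the \emph{subsequent} $\delta\to0$ limit (Propositions \ref{p5+}, \ref{odm}), where integrability drops to $\gamma$ and $\gamma+\gamma_{\rm Bog}$. Your route is more robust (it does not exploit the artificial high summability $\mathfrak c$), so nothing in it is wrong, but it duplicates work and obscures what the artificial pressure is actually buying you at this stage. A small notational slip: in (iii) you mix $R_\ep$ and $R_\delta$ subscripts in the same oscillation expression; what is meant is $\mathcal T_k(R_\ep)\rightharpoonup\overline{\mathcal T_k(R)}$ as $\ep\to0$ at fixed $\delta$, and the natural exponent at this level is $\mathfrak c+1$, not $\gamma+1$. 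The remaining steps — extraction of weak limits, vanishing of the $\ep$-terms, identification of $\overline{P_\delta(R,Z)}$ with $\overline{\Pi_\delta(R,t,x)}$ via Corollary \ref{L2} (the paper derives (\ref{P2}) directly from (\ref{renofrac}), (\ref{renofrac*}), but these are equivalent), passage in the energy inequality via convexity and Lemma \ref{localp}, and derivation of items 6 and 7 through Proposition \ref{LP2} — all match the paper.
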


The remaining part of this section is devoted to the proof of Proposition \ref{EP3}.

\subsection{Weak limits}\label{Sep+}
\subsubsection{Limit in the continuity equations}\label{Sep+1}
 Now, let $\varepsilon \to 0^+$ in equations (\ref{E4!+}), cf. (\ref{E4})$_{r=r_\ep,\vu=\vu_\ep}$, and in (\ref{E5!}). We notice that the limit passage in the convective terms can be performed as in the case of the mono-fluid compressible Navier--Stokes equations. Indeed, 
\begin{equation}\label{conv0}
\vu_\ep\rightharpoonup\vu\;\mbox{weakly in $L^2(I;W^{1,2}(\Omega))$}
\end{equation}
and
\begin{equation}\label{conv0+}
r_\ep\rightharpoonup r\;\mbox{in $L^\gamma(I;L^\gamma(\Gamma^{\rm out};|\vu_B\cdot\vc n|{\rm d}S_x))$}.
\end{equation}
Further,
seeing that 
\begin{equation}\label{conv1}
r_\ep\to r \;\mbox{in $C_{\rm weak}(\overline I;L^{\mathfrak{c}}(\Omega))$ }
\end{equation}
(as one can show by means of the  Arzel\`a--Ascoli type argument - where the requested equi-continuity
hypothesis is deduced from equation (\ref{E4})$_{r=r_\ep,\vu=\vu_\ep}$ and the uniform bounds (\ref{ts1}--\ref{ts3}))-- we deduce from the compact embedding $L^\mathfrak{c}(\Omega)\hookrightarrow\hookrightarrow W^{-1,2}(\Omega)$ and from $\vu_\ep\rightharpoonup\vu$ in $L^2(I;W^{1,2}(\Omega))$ the weak-* convergence
\begin{equation}\label{ruinfty}
r_\ep\vu_\ep\rightharpoonup_* r\vu\;\mbox{in $L^\infty(I;L^{\frac{2\mathfrak{c}}{\mathfrak{c}+1}}(\Omega))$}.
\end{equation}
Recalling (\ref{ts3}), we can pass to the limit $\ep\to 0+$ in parabolic equations (\ref{E4!+}) in order to obtain,
\begin{equation}\label{contdelta}
\intO{r\varphi(\tau)} + \int_0^\tau\int_{\Gamma^{\rm out}}\varphi r\vu_B\cdot\vc n{\rm d}S_x{\rm d}t
\end{equation}
$$
=\intO{r(0)\varphi(0)}+\intO{\Big(r\partial_t\varphi+r\vu\cdot\Grad\varphi\Big)}{\rm d}t
-\int_0^\tau\int_{\Gamma^{\rm in}}\varphi r_B\vu_B\cdot\vc n{\rm d}S_x{\rm d}t.
$$
for all $\tau\in[0,T]$ and all $\varphi\in C([0,T]\times\overline\Omega)$.
This yields the statement (\ref{E4!++}) in Proposition \ref{EP3}.

Further, convergence (\ref{conv0+}--\ref{conv1}) and (\ref{eq5.1!}) { yield} domination inequalities (\ref{eq5.1!++}).

Finally (\ref{contdelta}) yields renormalized equations (\ref{renoep+}) by virtue of (\ref{P3}) in Proposition \ref{LP2}, cf. also Remark \ref{rinflow}.
Combining Item 3 of Proposition \ref{LP2} with Corollary \ref{cont-tr} and applying them to
equation (\ref{contdelta}) with $r=R$ and $r=Z$, we get, in particular,
identity (\ref{renofrac!+})

\subsubsection{Weak { limit} in the momentum equation}
If $r_\ep=\vr_\ep+z_\ep$, the convergence (\ref{ruinfty}) can be consequently improved thanks to momentum equation (\ref{E5!})
and estimates (\ref{ts1}--\ref{ts3}), (\ref{Bogep}--\ref{PBogep}) to 
\begin{equation}\label{conv2*}
(\vr_\ep+z_\ep)\vu_\ep\to (\vr+z)\vu\;\mbox{ in $C_{\rm weak}(\overline I; L^{\frac{2\mathfrak{c}}{\mathfrak{c}+1}}(\Omega;\R^3))$}
\end{equation}
 again by the Arzel\`a--Ascoli type argument. 
With this observation at hand,
employing compact embedding  $ L^{\frac{2\mathfrak{c}}{\mathfrak{c}+1}}(\Omega)\hookrightarrow\hookrightarrow W^{-1,2}(\Omega)$ and $\vu_\ep\rightharpoonup\vu$ in $L^2(I;W^{1,2}(\Omega;\R^3))$ we infer   that
$$
(\vr_\ep+z_\ep)\vu_\ep\to (\vr+z)\vu \quad \text{ in } \quad L^2(I;W^{-1,2}(\Omega;\R^3))
$$ 
and consequently 
\begin{equation}\label{conv3}
(\vr_\ep+z_\ep)\vu_\ep\otimes\vu_\ep
 \rightharpoonup (\vr+z)\vu\otimes\vu\;\mbox{weakly e.g. in $L^1(Q_T;\R^{9})$,}
\end{equation}
at least for a chosen subsequence (not relabeled). 

{ Finally,
\begin{equation}\label{conv+}
 P_\delta(R_\ep, Z_\ep)\rightharpoonup\overline{P_\delta(R, Z)}\;\mbox{in
 $L^{\frac{\mathfrak{c}+1}{\mathfrak{c}}}(Q_T)$},
\end{equation}
by virtue of (\ref{PBogep}).\footnote{Starting from now, in general, $\overline{f(t,x, r, (\ldots),\vu)}$ denotes the $L^1-$weak limit of the
sequence $f(t,x,r_n, (\ldots)_n,\vu_n)$ in $L^1(Q_T)$ (provided it exists).}
}

Resuming (and realizing that all terms  in the momentum equation (\ref{E5!}) multiplied by $\ep$ will disappear in the limit again by virtue of (\ref{ts1}--\ref{ts3})) we get
{
\begin{equation} \label{E5!++}
\intO{ (\vr+z) \vu \cdot \bfphi } \Big|_{0}^{\tau} = 
\int_0^\tau \intO{ \Big[ (\vr+z) \vu \cdot \partial_t \bfphi + (\vr+z) \vu \otimes \vu : \Grad \bfphi 
+ \overline{P_\delta(R,Z)} \Div \bfphi - \mathbb{S}(\nabla\vu): \Grad \bfphi \Big] }{\rm d}t
\end{equation}
for all $\bfphi\in C^1([0,T)\times\Omega))$.
}

\subsection{Limit in the momentum equation- continued}\label{S5.2}

\subsubsection{Exploiting the almost compactness}\label{S5.2.1}

We define quantities $\mathfrak{s}_{\ep}=Z_\ep/_{\mathfrak{d}} R_\ep$ and ${ \mathfrak{s}=}\mathfrak{s}_{\mathfrak{d}}=
Z/_{\mathfrak{d}} R$
as in (\ref{renofrac+}). From (\ref{renofrac!+}), we get, in particular,
{
\begin{equation}\label{renofrac*}
\intO{R(\tau,x)\mathfrak{s}^2_{\mathfrak{d}}}+
\int_0^\tau\int_{\Gamma^{\rm out}}R\mathfrak{s}_{\mathfrak{d}}^2
\vu_B\cdot\vc n {\rm d} S_x{\rm d}t
=\intO{R_0(x)\mathfrak{s}^2_{\mathfrak{d}}(0,x)}
-\int_0^\tau\int_{\Gamma^{\rm in}}R_B\mathfrak{s}^2_B
\vu_B\cdot\vc n {\rm d} S_x{\rm d}t
\end{equation}
for all $\tau\in\overline I$. 
}

We have,
$$
\forall \tau\in [0,T],\;
\lim_{\ep \to 0}\intO{R_\ep(\mathfrak{s}_{\ep}-\mathfrak{s})^2}=\lim_{\ep\to 0}
\intO{R_\ep\mathfrak{s}_{\ep}^2} -\intO{R\mathfrak{s}^2},
$$
where we have used the fact that
$$
\lim_{\ep\to 0}\intO{R_\ep\mathfrak{s}_\ep\mathfrak{s}(\tau)}=\lim_{\ep\to 0}\intO{Z_\ep\mathfrak{s}(\tau)}=
\intO{Z\mathfrak{s}(\tau)}= \intO{R\mathfrak{s}^2(\tau)},\;
\tau\in [0,T].
$$

Thus putting together (\ref{renofrac}) and (\ref{renofrac*}), we get
\begin{equation}\label{P2}
\lim_{\ep \to 0}\intO{R_\ep(\mathfrak{s}_{\ep}-\mathfrak{s})^2(\tau)}=0
\end{equation}
for all $\tau\in [0,T]$. 
Consequently, by interpolation, in particular,
\begin{equation}\label{almostcep}
\int_0^T\int_K{R^p_\ep(\tau)(\mathfrak{s}_\ep-\mathfrak{s})^q}{\rm d}t\to 0,\; { 0<p<\frac{\mathfrak{c}+1}{\mathfrak{c}}}, \;0<q<\infty,\
K\ \mbox{any compact in $\Omega$}.
\end{equation}

Now, we may write
$$
P_\delta(R_\varepsilon,Z_\varepsilon) = \Pi_\delta(R_\varepsilon, R_\varepsilon \mathfrak{s}_\varepsilon) 
= P_\delta (R_\varepsilon,R_\varepsilon \mathfrak{s}_\varepsilon) - P_\delta(R_\varepsilon, R_\varepsilon \mathfrak{s}) + \Pi_\delta(R_\varepsilon, t,x),\;\mbox{
$\Pi_\delta(R,t,x):=P_\delta(R,R \mathfrak{s}(t,x)).$}
$$
We have, due to (\ref{almostcep}) and hypothesis (\ref{grP}), 
$$
\lim_{\varepsilon \to 0} \Big|\int_0^T \int_K \big(P_\delta(R_\varepsilon,R_\varepsilon 
{ \mathfrak{s}_\varepsilon}) - P_\delta(\vr_\varepsilon,\vr_\varepsilon \mathfrak{s})\big)\dx\dt\Big|
$$
$$  
\leq c(\delta) \lim_{\varepsilon \to 0} \Big[\int_0^T \int_K\Big(R_\ep^{\underline\gamma}+
R_\ep^{\overline\gamma}\Big)|\mathfrak{s}_\varepsilon -\mathfrak{s}| \dx\dt  + \int_0^T \int_K R_\epsilon^{\mathfrak{c}} |\mathfrak{s}_\varepsilon^{\mathfrak{c}} -\mathfrak{s}^{\mathfrak{c}}| \dx\dt\Big]  = 0;
$$
whence
\begin{equation}\label{eq6.6}
\overline{P_\delta(R,Z)}=\overline{\Pi_\delta(R,t,x)}.
\end{equation}
Consequently, in particular,
\begin{equation} \label{eq6.7}
\int_0^T \int_\Omega \big((\vr+z)\vu \cdot \partial_t \vcg{\varphi} +  \big((\vr+z)\vu\otimes \vu\big): \Grad \vcg{\varphi} + {\Ov{\Pi_\delta (R,t,x)}} \, \Div \vcg{\varphi}\big) \dx \dt \\
= \int_0^T\int_\Omega \mathbb{S}(\Grad\vu):\Grad \vcg{\varphi}  \dx \dt, 
\end{equation}
{ where $\vcg{\varphi}\in C_c^1(I\times\Omega).$}

We may apply to the problem the theory available for the "mono-fluid" case with pressure of one variable from Feireisl et al. \cite{FNP}, slightly
modified in order to accommodate pressure laws allowing the space-time dependence, { similarly as in 
\cite[Chapter 3]{FeNoB}.}

\subsubsection{Effective viscous flux identity} \label{S5.2.2}

To this aim, we first recall the {\it effective viscous flux identity} which in our situation has the form
\begin{prop} \label{p5}
We have, possibly for a subsequence $\varepsilon \to 0^+$, the following identity
\begin{equation} \label{eq6.8}
{\Ov{\Pi_\delta (R,t,x) R}} -(2\mu+\lambda) \Ov{R \Div \vu} = {\Ov{\Pi_\delta (R,t,x)}} \, R -(2\mu+\lambda) R \, \Div \vu
\end{equation}
fulfilled a.a. in { $I\times \Omega$.} 
\end{prop}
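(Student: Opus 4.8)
The plan is to adapt the classical Feireisl--Lions argument for the effective viscous flux identity to the present setting with a space-time dependent pressure $\Pi_\delta(R,t,x)$. First I would introduce the test function
$$
\bfphi(t,x)=\psi(t)\,\zeta(x)\,\mathcal{A}\big[\zeta R_\ep\big](t,x),\qquad \psi\in C^1_c(I),\ \zeta\in\DC(\Omega),
$$
where $\mathcal{A}=\Grad\Delta^{-1}$ is the double-inverse-curl (inverse divergence) operator on $\R^3$, so that $\Div\mathcal{A}[h]=h$ and $\mathcal{A}$ is a classical Fourier multiplier of order $-1$. The cutoff $\zeta$ confines everything to a compact subset $K=\mathrm{supp}\,\zeta$ of $\Omega$, which is exactly where the improved pressure estimate (\ref{PBogep}) and the amplitude bound (\ref{Bogep}) are available; this is why the identity (\ref{eq6.8}) is only claimed a.e. in $I\times\Omega$ rather than up to the boundary. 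Plugging this $\bfphi$ into the approximate momentum equation (\ref{E5!}) and, separately, the analogous quantity with $R_\ep$ replaced by its weak limit $R$ into the limit momentum equation (\ref{E5!++}) (more precisely (\ref{eq6.7}), using already (\ref{eq6.6})), I would subtract the two identities.

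The core of the argument is then the commutator estimate of Feireisl et al.: the only term that does not pass to the limit by soft arguments is
$$
\int_{I\times\Omega}\psi\,\zeta\Big[(\vr_\ep+z_\ep)\vu_\ep\otimes\vu_\ep:\Grad\mathcal{A}[\zeta R_\ep]-(\vr_\ep+z_\ep)\vu_\ep\cdot\big(\vu_\ep\cdot\Grad\big)\mathcal{A}[\zeta R_\ep]\Big],
$$
which is controlled by the Div-Curl / compensated-compactness lemma (or the commutator lemma $\mathcal{A}[v R]-v\,\mathcal{A}[R]$ being compact), using that $R_\ep\to R$ in $C_{\rm weak}(\overline I;L^{\mathfrak c}(\Omega))$ hence in $L^2(I;W^{-1,2}_{\rm loc})$ and that $(\vr_\ep+z_\ep)\vu_\ep\otimes\vu_\ep$ is bounded in $L^1$, together with (\ref{conv2*})--(\ref{conv3}). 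The $\ep$-terms in (\ref{E5!}) (namely $\ep\Grad(\vr_\ep+z_\ep)\cdot\Grad\vu_\ep\cdot\bfphi$) vanish in the limit by the uniform bounds (\ref{ts1})--(\ref{ts3}), and the time-derivative terms involving $\partial_t\mathcal{A}[\zeta R_\ep]$ are handled via the renormalized continuity equation — here I would use (\ref{E4!++})/(\ref{contdelta}), which gives $\partial_t(\zeta R_\ep)\in L^2(I;[W^{1,2}]^*)$ locally, so that $\mathcal{A}[\partial_t(\zeta R_\ep)]$ and the corresponding convergence in $L^2(Q_T)$ is controlled exactly as in the Bogovskii computation (\ref{Z4}). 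Matching the remaining terms, the Laplace operator structure produces the clean coefficient: $\Grad\mathcal{A}[\zeta R_\ep]$ has symmetric part whose contraction with $\tn S(\Grad\vu_\ep)$ yields $(2\mu+\lambda)R_\ep\Div\vu_\ep$ modulo terms containing $\Grad\zeta$ that are lower order (they involve $\mathcal{A}$ of an $L^{\mathfrak c+1}_{\rm loc}$ function against $\Grad\vu_\ep$ and are compact), and the pressure term yields $\overline{\Pi_\delta(R,t,x)R_\ep}$. Passing $\ep\to0$ and then localizing ($\zeta\uparrow 1$ on compacts, $\psi$ arbitrary) delivers (\ref{eq6.8}).

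The main obstacle I anticipate is the rigorous justification of the two time-derivative/Bogovskii-type terms and, in particular, the identity $\partial_t\mathcal{A}[\zeta R_\ep]=\mathcal{A}[\partial_t(\zeta R_\ep)]$ together with the requisite strong $L^2_{\rm loc}$ convergence of $\mathcal{A}[\partial_t(\zeta R_\ep)]$: this needs the temporal-mollification argument already flagged in the footnote to (\ref{tsb}) and the fact that $R_\ep$ is a \emph{renormalized} solution (via Proposition \ref{LP2}), so that $R_\ep^{\mathfrak c/2}$ is also under control (cf. (\ref{nova++})) and no concentration occurs. A secondary technical point is that the pressure is only equi-integrable, not bounded in any reflexive $L^p$, up to $\partial\Omega$; but since $\zeta$ is compactly supported this never enters, and on $K$ one has the genuine $L^{(\mathfrak c+1)/\mathfrak c}$ bound (\ref{PBogep}) which suffices for all the weak-convergence manipulations. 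Everything else — lower semicontinuity, Div-Curl, the algebraic rearrangement around the Laplacian — is routine and parallels \cite{FNP} and \cite[Chapter 3]{FeNoB} verbatim, the only genuinely new ingredient being that the pressure now also depends on $(t,x)$ through the fixed function $\mathfrak{s}(t,x)$, which is harmless because $\mathfrak{s}\in L^\infty$ and does not interfere with any compactness step.
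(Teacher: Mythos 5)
Your overall strategy matches the paper's: test the approximate momentum equation with $\psi(t)\phi(x)\nabla\Delta^{-1}[\phi R_\ep]$, the limit equation with $\psi(t)\phi(x)\nabla\Delta^{-1}[\phi R]$, subtract, and pass $\ep\to 0$ using the compensated-compactness machinery for the Riesz transforms. You also correctly note that the compact support of $\phi$ ($\zeta$ in your notation) is what confines the argument to regions where the improved pressure bound (\ref{PBogep}) holds, and that the $\ep$-terms and the time-derivative terms require the renormalized/parabolic continuity equation.

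There is, however, a concrete error in the display you identify as ``the only term that does not pass to the limit by soft arguments.'' As written,
\[
(\vr_\ep+z_\ep)\vu_\ep\otimes\vu_\ep:\Grad\mathcal{A}[\zeta R_\ep]
-(\vr_\ep+z_\ep)\vu_\ep\cdot\big(\vu_\ep\cdot\Grad\big)\mathcal{A}[\zeta R_\ep]
\]
is identically zero: both contractions equal $(\vr_\ep+z_\ep)\,u^\ep_i u^\ep_j\,\partial_j\big(\mathcal{A}[\zeta R_\ep]\big)_i$. This hides the genuine difficulty. The actual ``hard'' term has the Riesz-commutator structure
\[
\vu_\ep\cdot\Big(R_\ep\,{\mathfrak{R}}\!\cdot\!\big((\vr_\ep+z_\ep)\vu_\ep\phi\big)
-(\vr_\ep+z_\ep)\vu_\ep\cdot{\mathfrak{R}}\big(R_\ep\phi\big)\Big),
\qquad {\mathfrak{R}}=\Grad\Delta^{-1}\Div,
\]
and it only appears \emph{after} you substitute the continuity equation into $\partial_t\mathcal{A}[\phi R_\ep]$ --- which turns it into $-\mathfrak{R}(\phi R_\ep\vu_\ep)$ modulo commutators --- and then combine the result with the convective term. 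Skipping this substitution is precisely what leads you to a tautologically zero expression, and it is also the step where the paper invokes Lemma \ref{rieszcom} (the Div-Curl consequence) together with the pointwise-in-time weak convergences from (\ref{conv1}), (\ref{conv2*}) and the compact embedding $L^{2\mathfrak c/(\mathfrak c+1)}(\Omega)\hookrightarrow\hookrightarrow W^{-1,2}(\Omega)$ to upgrade to strong $L^2(I;W^{-1,2}(\Omega))$ convergence before pairing with $\vu_\ep\rightharpoonup\vu$ in $L^2(I;W^{1,2})$. That last pairing is where the exponent threshold $\mathfrak c>9/2$ in (\ref{Pdelta}) actually matters; your proposal omits this point. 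The rest of your sketch (treatment of the $\ep$-viscous term, of the pressure localization, and of the $\partial_t\mathcal{A}$ commutation with the mollification caveat) is consistent with the paper's argument.
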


\begin{proof}
We denote  by ${\mathfrak{R}}$ the Riesz transform
with Fourier symbol $\frac {\xi\otimes\xi}{|\xi|^2}$.
Following Lions \cite{L4}, we shall use in the approximating momentum equation (\ref{E5!}) test function
\begin{equation}\label{test1}
\varphi(t,x)=\psi(t){ \phi(x)}(\nabla\Delta^{-1}(R_\ep\phi))(t,x),\;\;\psi\in C^1_c(0,T),\;\phi\in C^1_c(\Omega)
\end{equation}
and in the limiting momentum equation (\ref{E5!++}) (resp. (\ref{eq6.7})) test function 
\begin{equation}\label{test2}
\varphi(t,x)=\psi(t){ \phi(x)}(\nabla\Delta^{-1}(R\phi))(t,x),\;\;\psi\in C^1_c(0,T),\;\phi\in C^1_c(\Omega),
\end{equation}
subtract both identities and perform the limit passage $\ep\to 0$. This is a laborious, but nowadays standard calculation (whose details, for "simple" compressible Navier--Stokes equations, can be found e.g. in 
\cite[Lemma 3.2]{FNP}, \cite[Chapter 3]{NoSt}, \cite{EF70} or \cite[Chapter 3]{FeNoB}) leading
 to the identity
\begin{equation}\label{ddd!}
\int_0^T\intO{\psi{ \phi^2}\Big({\overline{\Pi_\delta(R,t,x)}}
-(2\mu +\lambda){\rm div}\,\vu\Big)R}\,{\rm d}t
\end{equation}
$$
-\int_0^T\intO{\psi{ \phi^2}\Big({\overline{\Pi_\delta(R,t,x) R}}
-(2\mu +\lambda)\overline{R\,{\rm div}\,\vu}\Big)}\,{\rm d}t
$$
$$
=
\int_0^T\intO{\psi{ \phi}\vu\cdot\Big(R {\mathfrak{R}}\cdot((\vr+z)\vu\phi)-(\vr+z)\vu\cdot{\mathfrak{R}}(R\phi)\Big)
}\,{\rm d}t
$$
$$
-
\lim_{\ep\to 0}\int_0^T\intO{\psi{ \phi}\vu_\ep\cdot\Big(R_\ep {\mathfrak{R}}\cdot((\vr_\ep+z_\ep)\vu_\ep\phi)-(\vr_\ep+z_\ep)\vu_\ep\cdot{\mathfrak{R}}(R_\ep\phi)\Big)
}\,{\rm d}t.
$$
This process involves several integrations by parts and exploits continuity equations in form (\ref{E4!++}) and the parabolic equations (\ref{E1}--\ref{E3}) in form
 (\ref{E4!+}) in the same way as in the
mono-fluid theory.\footnote{ Due to the presence of $\phi$ in (\ref{test1}--\ref{test2}), the boundary conditions in the proof of the effective viscous flux identity do not play any role.} As in the mono-fluid theory,
{the essential observation for getting (\ref{ddd!}) is the fact that the map $R\mapsto\varphi$ defined above (cf. (\ref{test2}){ )} is a linear and continuous from
$L^p(\Omega)$ to $W^{1,p}(\Omega)$, $1<p<\infty$ as a consequence of classical   H\"ormander--Michlin's multiplier theorem of harmonic analysis, cf. Lemma \ref{LcalA}.
The most non trivial moment in this process is to show that the right-hand side of identity (\ref{ddd!}) is $0$. To see it, we repeat the reasoning \cite{FNP}
adapted to this situation.
We first realize  that the $C_{\rm weak}(I, L^{\mathfrak{c}}(\Omega))$-convergence of $(R_\ep, Z_\ep)$ and 
$C_{\rm weak}([0,T], L^{\frac{2\mathfrak{c}}{\mathfrak{c}+1}}(\Omega;\R^3))$-convergence of $(\vr_\ep+z_\ep)\vu_\ep$ evoked in (\ref{conv1}--\ref{conv2*})
imply, in particular,
\begin{equation}\label{cvep!}
\mbox{for all $t\in [0,T]$},\;(R_\ep, Z_\ep)(t)\rightharpoonup(R, Z)(t)\;\mbox{in e.g. $(L^{\mathfrak{c}}(\Omega))^2$},
\end{equation}
$$
(\vr_\ep+z_\ep)\vu_\ep(t) \rightharpoonup(\vr+z)\vu(t)\;\mbox{in 
$L^{\frac {2\mathfrak{c}}{\mathfrak{c}+1}}(\Omega;\R^3)$}.
$$
Since ${\mathfrak{R}}$ is a continuous operator from $L^p(\R^3)$ to $L^p(\R^3)$, $1<p<\infty$, we  have the same type of convergence for sequences ${\mathfrak{R}}[R_\ep(t)]$,
${\mathfrak{R}}[Z_\ep(t)]$ and ${\mathfrak{R}}[(\vr_\ep+z_\ep)\vu_\ep(t)]$ to their respective limits
 ${\mathfrak{R}}[R(t)]$,
${\mathfrak{R}}[Z(t)]$ and ${\mathfrak{R}}[(\vr+z)\vu(t)]$.

 At this stage we  apply  to the above situation Proposition \ref{rieszcom}, and get
 $$
[R_\ep {\mathfrak{R}}\cdot((\vr_\ep+z_\ep)\vu_\ep\phi)-(\vr_\ep+z_\ep)\vu_\ep\cdot{\mathfrak{R}}(R_\ep\phi)](t)\rightharpoonup
[R {\mathfrak{R}}\cdot((\vr+z)\vu\phi)-(\vr+z)\vu\cdot{\mathfrak{R}}(R\phi)](t)
$$
for all $t\in [0,T]$ (weakly) in $L^{\frac{2\mathfrak{c}}{\mathfrak{c}+3}}(\Omega)$.
In view of compact embedding $L^{\frac{2\mathfrak{c}}{\mathfrak{c}+1}}(\Omega)\hookrightarrow\hookrightarrow W^{-1,2}(\Omega)$, 
and the boundedness of $\|R_\ep {\mathfrak{R}}\cdot((\vr_\ep+z_\ep)\vu_\ep)-(\vr_\ep+z_\ep)\vu_\ep\cdot{\mathfrak{R}}(R_\ep)\|_{W^{-1,2}(\Omega)}$  in $L^\infty(I)$,
we infer, in particular,
$$
R_\ep {\mathfrak{R}}\cdot((\vr_\ep+z_\ep)\vu_\ep\phi)-(\vr_\ep+z_\ep)\vu_\ep\cdot{\mathfrak{R}}(R_\ep\phi)\to R {\mathfrak{R}}\cdot((\vr+z)\vu\phi)-(\vr+z)\vu\cdot{\mathfrak{R}}(R\phi)
$$
in $L^2(0,T;W^{-1,2}(\Omega))$.
Recalling the $L^2(I;W^{1,2}(\Omega))$-weak convergence of $\vu_\ep$ we get  (\ref{ddd!}). This completes the proof of Proposition \ref{p5}.

We realize for the further later reference, that the part of argumentation starting from (\ref{cvep!}) requires $\mathfrak{c}>9/2$. 
}
\end{proof}

\subsubsection{Strong convergence of the dominating density sequence}
\label{S5.2.3}

We are now ready to prove the strong convergence of $R_\varepsilon \to R$,
more exactly
\begin{equation}\label{a.a.ep}
R_\varepsilon \to R\;\mbox{a.a. in $I\times\Omega$},
\end{equation}
$$
R_\varepsilon \to R\;\mbox{ a.a. in $I\times\Gamma^{\rm out}$ with respect to the measure $|\vu_B\cdot\vc n|{\rm d}S_x\dt$}.
$$
We will do it again by mimicking the mono-fluid case, see e.g. \cite[Chapter 7]{NoSt}.

We already know { from Section \ref{Sep+1}} that $R$ satisfies the renormalized continuity equation (\ref{renoep+}), in particular, 
\begin{equation}\label{reno!*-}
\intO{{R\log R}(\tau,x)} + \int_0^\tau\int_{\Gamma^{\rm out}} R\log R\vu_B\cdot\vc n{\rm d}S_x{\rm d}t
\end{equation}
$$
=
\intO{R_0\log R_0(x)}
-\int_0^\tau\int_{\Gamma^{\rm in}}  R_B\log R_B\vu_B\cdot\vc n{\rm d}S_x{\rm d}t-\int_0^\tau\intO{R{\rm div}\vu}{\rm d}t
$$ 
for all $\tau\in \overline I$.

Next, we let $\ep\to 0+$ and then $\mathfrak{a}\to 0$ in (\ref{renoep}) in order to get, in particular,
\begin{equation}\label{reno!*}
\intO{\overline{R\log R}(\tau,x)} + \int_0^\tau\int_{\Gamma^{\rm out}} \overline{R\log R}\vu_B\cdot\vc n {\rm d}S_x{\rm d}t
-\int_0^\tau\intO{\overline{R{\rm div}\vu}}{\rm d}t
\end{equation}
$$
\le
\intO{R_0\log R_0(x)}
-\int_0^\tau\int_{\Gamma^{\rm in}}  R_B\log R_B\vu_B\cdot\vc n {\rm d}S_x{\rm d}t
$$ 
for all $\tau\in \overline I$.

Combining the latter inequality with the identity (\ref{reno!*-}), we
 get
$$
\intO{\Big(\overline{R\log R}-{R\log R}\Big)(\tau,x)}
+  \int_0^\tau\int_{\Gamma^{\rm out}} \Big(\overline{R\log R}-R\log R\Big)\vu_B\cdot\vc n {\rm d}S_x{\rm d}t
$$
$$
\le
\int_0^\tau\intO{\varphi\Big(R{\rm div}\vu-\overline{ R{\rm div}\vu }\Big)}{\rm d}t
$$
for all $\tau\in \overline I$.

{ Now, the crucial point is to observe that for almost all $(t,x)\in Q_T$,
$R\mapsto \Pi_\delta(R,t,x)$ is a non decreasing function, by virtue
of (\ref{dzP}--\ref{drP}), cf. (\ref{Ps1}).}
Consequently, according to Proposition
\ref{p5} and Lemma \ref{Lemma4},
$$
\intO{\Big(\overline{R\log R}-{R\log R}\Big)(\tau,x)\varphi(x)}
+  \int_0^\tau\int_{\Gamma^{\rm out}} \Big(\overline{R\log R}-R\log R\Big)\vu_B\cdot\vc n {\rm d}S_x{\rm d}t\le 0.
$$
Since $\overline{R\log R}-R\log R\ge 0$ a.e. in $Q_T$, cf. Lemma \ref{Lemma2}, this implies $\overline{R\log R}-R\log R=0$ a.e. in $Q_T$, and consequently,
$$
R_\ep\to R,\ Z_\ep \to Z\ \mbox{a.e. in $Q_T$}
$$
cf. Lemma \ref{Lemma3} (and (\ref{P2}) to see the confergence of $Z_\ep$).

This, in combination with (\ref{PBogep}) and (\ref{eq6.6}) allows to show that, in particular,
\begin{equation}\label{dod}
\overline{P_\delta(R,Z)}=P_\delta(R,Z)
\end{equation}
and to deduce from
momentum equation (\ref{E5!++}) (resp. (\ref{eq6.7})) its final form (\ref{E5!+}) at level III.

\subsection{Limit in the energy inequality}\label{S5.3}

In view of the already proved convergence relations and what was said about the passage
from (\ref{E7}) to (\ref{E7!}) in Section \ref{SEi},
seeing that one can use the lower weak semicontinuity of convex functionals at the left hand side, the limit passage $\ep \to 0+$ from the energy inequality (\ref{E7!}) to (\ref{E7+}) is at this stage rudimentary. { We shall give more details in the limit passage $\delta\to 0$, which is similar.}

\section{Limit from level III to the "academic" system { (limit $\delta\to 0$)}}
\label{Sdel}
The goal of this section is to pass to the limit $\delta\to 0+$ in
continuity equations (\ref{E4!++}), in the momentum equation (\ref{E5!+}) and in the energy inequality (\ref{E7+}). Most of the ``rough'' work was already done within the previous limit. The only issue will be to handle the lower
summability of uniform estimates for the density variables. Here, the main
ideas can be in a large extent taken over from the monofluid case (namely
from papers \cite{ChJNo} and \cite{KwNo}). We shall therefore proceed more quickly trying underline only the key points and differences.
\subsection{Estimates and weak limits}
Similarly as in Setion \ref{Sep}, we deduce from (\ref{E7!}) and (\ref{eq5.1!}) readily the following uniform bounds with respect to $\delta\in (0,1)$:
\begin{equation}\label{ts1d}
\|\vv_\delta\|_{L^2(I,W^{1,2}(\Omega ))}\aleq C,\;
\|r_\delta|\vu_\delta|^2\|_{L^\infty(I,L^1(\Omega ))}\aleq
 C,
\end{equation}
\begin{equation}\label{ts2d}
\|r_\delta\|_{L^\infty(I,L^{\mathfrak{\gamma}}(\Omega))}\le
L,\; \|r_\delta|\vu_B\cdot\vc n|^{1/{\mathfrak{\gamma}}}\|_{L^{\mathfrak{\gamma}}(I,L^{\mathfrak{\gamma}}(\Omega))}\le
C,
\end{equation}
and $\delta$-dependent bounds
{
\begin{equation}\label{ts3d}
\delta^{\frac 1{\mathfrak{c}}} \|r_\delta\|_{L^\infty(I;L^\mathfrak{c}(\Omega))}\aleq
C.
\end{equation}
}
In the above, $r$ stands for $\vr,z,Z, R$.

Taking in (\ref{E5!+}) test function 
$$
\phi(t,x)= \psi(t)\mathcal{B} \left[ \eta R^\theta(t) - \frac{1}{|\Omega|} \intO{ \eta R^\theta(t)} \right](x),\; \theta>0\,\mbox{sufficiently small,}
$$
$\psi$, $\eta$ as in (\ref{tsb}),
%
we get by the same reasoning as in (\ref{Z4}--\ref{PBogep}), (\ref{Best}), { estimates
\begin{equation}\label{Bogd}
\| r_\delta\|_{L^{\mathfrak{\gamma}+\gamma_{\rm Bog}}(I\times K)}\aleq C(K),\;r\;\mbox{stands for}\;\vr, z, R, Z.
\end{equation}
and 
\begin{equation}\label{PBogd}
\delta^{\frac 1{\mathfrak{c}+\gamma_{\rm Bog}}}\|R_\delta\|_{L^{\mathfrak{c}+\gamma_{\rm Bog}}(I\times K)}\leq L(K),\;
\| P(R_\delta,Z_\delta)\|_{L^{\frac{{\gamma}+\gamma_{\rm Bog}}{\gamma}}(I\times K)}\aleq C(K),
\end{equation}
with any compact subset $K$ of $\Omega$, provided
$$
\gamma>\frac 32,\;\mbox{where}\; \gamma_{\rm Bog}=\min\{\frac 23\gamma-1,\frac \gamma 2\}.
$$

In order to pass to the limit in the term $\int_0^\tau\intO{P_\delta(R_\delta,Z_\delta){\rm div}\vu_B}{\rm d}t$ of the energy inequality (\ref{E7+}), we however would need at least equi-integrability of $P_\delta(R_\delta,Z_\delta)$ up to the boundary, if we do not want to impose further restrictions on $\vu_B$ and $P$. In order to get such estimate, we shall test momentum equation
(\ref{E5!+}) by
$$
\phi(t,x)=\psi(t) {\cal B}\Big(1_{\Omega\setminus U_{-,h}}-\frac{|\Omega|-|U_{-,h}|}{|\Omega|}\Big)(x),\;
\mbox{$0<h<h_0$, $h_0$ sufficiently small},
$$
where the inner neighborhood $U_{-,h}$ of $\partial\Omega$
is defined in Lemma \ref{localp}.
Employing (\ref{ts1d}--\ref{ts3d}), we get in view of Lemma \ref{localp},
\begin{equation}\label{PBogd+}
\| P_\delta(R_\delta,Z_\delta)\|_{L^1(U_{-,h})}
\aleq h^{\frac{2\gamma-3}{6\gamma}}
\end{equation}
uniformly with respect to $\delta$,
provided $\gamma>3/2$, see \cite[Lemma 6.1]{KwNo} for the details.

In view of estimates (\ref{ts1d}--\ref{ts3d}), a short but detailed inspection of Section \ref{Sep+} confirms that the limits
(\ref{conv0}--\ref{ruinfty}), (\ref{conv2*}--\ref{conv3}) remain in force
also for a subsequence of $(\vr_\delta, z_\delta, R_\delta, Z_\delta,\vu_\delta)$ even if one replace in all exponents of Lebesgue spaces
$\mathfrak{c}$ by $\gamma$ (provided $\gamma>3/2$). Further, estimates
(\ref{Bogd}--\ref{PBogd+}) ensure equi-integrability of the sequence
$P_\delta(R_\delta,Z_\delta)$ in $L^1(Q_T)$. Recalling, in addition, (\ref{ts3d}), we infer
\begin{equation}\label{conv+d}
 P_\delta(R_\delta, Z_\delta)\rightharpoonup\overline{ P(R, Z)}\;\mbox{in $L^1(Q_T)$}.
\end{equation}
We can therefore pass to the limit $\delta\to 0$ in the continuity equations
(\ref{E4!++}) in order to get the final system (\ref{eq2.7}) of continuity
equations required by the  Definition \ref{d1}. Likewise, the limit
in the momentum equation (\ref{E5!+}) is
\begin{equation} \label{E5!++d}
\intO{ (\vr+z) \vu \cdot \bfphi } \Big|_{0}^{\tau} = 
\int_0^\tau \intO{ \Big[ (\vr+z) \vu \cdot \partial_t \bfphi + (\vr+z) \vu \otimes \vu : \Grad \bfphi 
+ \overline{P(R,Z)} \Div \bfphi - \mathbb{S}(\nabla\vu): \Grad \bfphi \Big] }{\rm d}t
\end{equation}
for all $\bfphi\in C_c^1([0,T)\times\Omega))$.
}

\subsection{Exploiting the almost compactness}
Now, we are at the point to exploit the almost compactness, as in Section
\ref{S5.2.1}. { Starting from this point we shall use systematically the
generalization of the Di-Perna, Lions transport theory formulated
in Proposition \ref{LP2}. { Essentially from this reason, we shall
need the density sequence $R_\delta$ uniformly integrable  in {$L^2(I;L^2(\Omega))$}, which amounts, in view of only local improved estimates of density (cf. (\ref{Bogd}), to assume
$\gamma\ge 2$, cf. (\ref{ts2d}).}

We define ${\mathfrak{s}}=\mathfrak{s}_{\mathfrak{d}}$ as in (\ref{renofrac+}) by using weak limits $Z$ and $R$. 
Combining (\ref{renofrac*}) with (\ref{renofrac!+})
one gets,
\begin{equation}\label{P2d}
\forall\tau\in\overline I,\ \lim_{\delta \to 0}\intO{R_\delta(\mathfrak{s}_{\delta}-\mathfrak{s})^2(\tau)}=0,\
\lim_{\delta \to 0}\int_0^\tau\int_{\Gamma^{\rm out}}{R_\delta(\mathfrak{s}_{\delta}-\mathfrak{s})^2(\tau)\vu_B\cdot\vc n{\rm d}S_x{\rm d}t}
\end{equation}
compare with (\ref{P2}). 
{Further, by interpolation,
$$
\forall \tau\in\overline I,\;\intO{R^p_\delta(\tau)(\mathfrak{s}_\delta-\mathfrak{s})^q(\tau)}\to 0,\;0<p<\gamma,\;0<q<\infty,
$$
and 
\begin{equation}\label{almostcd}
\int_0^T\int_K{R^p_\delta(t)(\mathfrak{s}_\delta-\mathfrak{s})^q}{\rm d} x{\rm d}t\to 0,\; 0<p<\gamma+\gamma_{\rm Bog}, \;0<q<\infty
\end{equation}
with any $K$ a compact subset of $\Omega$.\footnote{Relation (\ref{P2d}) implies the same type of convergence on $I\times\Gamma^{\rm out}$, but we do not need to exploit it here.}

Introducing
$$
\Pi(R,t,x)=P(R, R\mathfrak{s}(t,x))
$$
and writing
$$
P(R_\delta,Z_\delta)=[P(R_\delta,R_\delta{\mathfrak{s}}_\delta)-
P(R_\delta,R_\delta{\mathfrak{s}})]+P(R_\delta,R_\delta{\mathfrak{s}}) 
$$
we easily deduce by using hypotheses (\ref{dzP}), (\ref{grP}), (\ref{almostcd}) and available bounds,
$$
\overline{P(R,Z)}=\overline{\Pi(R,t,x)},
$$
cf. (\ref{eq6.6}). Thus, equation (\ref{E5!++d}) rewrites
\begin{equation} \label{E5!++d+}
\intO{ (\vr+z) \vu \cdot \bfphi } \Big|_{0}^{\tau} = 
\int_0^\tau \intO{ \Big[(\vr+z) \vu \cdot \partial_t \bfphi + (\vr+z) \vu \otimes \vu : \Grad \bfphi 
+ \overline{\Pi(R,t,x)} \Div \bfphi - \mathbb{S}(\nabla\vu): \Grad \bfphi \Big] }{\rm d}t
\end{equation}
for all $\bfphi\in C_c^1([0,T)\times\Omega))$.

\subsection{Effective viscous flux identity}
We use the same main idea as in section \ref{S5.2.2}. Nevertheless,
$R_\delta$ does not possess enough summability in order to carry out
the argument in the same way; it is well known from the mono-fluid case, that the way out of this is to use for the construction of test functions a 
convenient truncation ${\cal T}_k(R_\delta)$ of $R_\delta$, cf. (\ref{Tk}).

Using in the momentum equation (\ref{E5!+})--where $P_\delta(R,Z)(t,x)=
\Pi(R(t,x),t,x)$ by virtue of equation (\ref{eq6.6})-- test function
\begin{equation}\label{test1+}
\varphi(t,x)=\psi(t){ \phi(x)}(\nabla\Delta^{-1}({\cal T}_k(R_\delta)\phi))(t,x),\;\;\psi\in C^1_c(0,T),\;\phi\in C^1_c(\Omega)
\end{equation}
and in the momentum equation (\ref{E5!++d+}) test function 
\begin{equation}\label{test2*}
\varphi(t,x)=\psi(t){ \phi(x)}(\nabla\Delta^{-1}(\overline{{\cal T}_k(R})\phi))(t,x),\;\;\psi\in C^1_c(0,T),\;\phi\in C^1_c(\Omega),
\end{equation}
we get by the same reasoning as in the proof of Proposition \ref{p5}\footnote{A short inspection of the sketch of the proof of Proposition
\ref{p5} shows that the argument to get (\ref{eq6.8+}) passes provided
$\gamma>3/2$, cf. (\ref{cvep!}) and the reasoning after. We have, however,
used the treshold $\gamma\ge 2$ in order to prove that $\overline{P(R,Z)}
=\overline{\Pi(R,t,x)}$ in the previous section.}, the
effective viscous flux identity in the following form:
\begin{prop} \label{p5+}
\begin{equation} \label{eq6.8+}
{\Ov{\Pi (R,t,x) {\cal T}_k(R)}} -(2\mu+\lambda) \Ov{{\cal T}_k(R) \Div \vu} = {\Ov{\Pi (R,t,x)}} \,\Ov{{\cal T}_k( R)} -(2\mu+\lambda) \Ov{{\cal T}_k( R)}  \, \Div \vu
\end{equation}
fulfilled a.a. in { $I\times \Omega$.} 
\end{prop}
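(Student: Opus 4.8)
The plan is to follow the blueprint already laid down in the proof of Proposition \ref{p5} (the $\ep\to0$ version), adapting it to the $\delta\to0$ setting with the truncation ${\cal T}_k$ in place of the identity. First I would fix $k\ge1$, $\psi\in C^1_c(0,T)$, $\phi\in C^1_c(\Omega)$, and use the test functions \eqref{test1+} in the momentum equation \eqref{E5!+} (written with $P_\delta(R_\delta,Z_\delta)=\Pi(R_\delta,t,x)+\delta(R_\delta^{\mathfrak c}+Z_\delta^{\mathfrak c})$ thanks to \eqref{eq6.6}) and \eqref{test2*} in the limit momentum equation \eqref{E5!++d+}. The continuity of the singular integral operator $R\mapsto \nabla\Delta^{-1}(\cdot\,\phi)$ from $L^p(\Omega)$ to $W^{1,p}(\Omega)$ for $1<p<\infty$ (Lemma \ref{LcalA}) is what makes these test functions admissible; since ${\cal T}_k(R_\delta)$ is bounded in $L^\infty$, there is no summability obstruction at this stage, in contrast with \eqref{test1}–\eqref{test2} which needed $\mathfrak c>9/2$. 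Here I would only need $\gamma>3/2$ to run the harmonic-analysis part, though the ambient hypothesis $\gamma\ge2$ is kept for the reasons explained after \eqref{P2d}.

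Next I would subtract the two resulting identities and pass to the limit $\delta\to0$. This is the laborious but routine computation (integrations by parts, use of the continuity equations in the form \eqref{E4!++} to rewrite $\partial_t{\cal T}_k(R_\delta)$ via the renormalized equation \eqref{renoep+} with $B={\cal T}_k$, ${\cal Y}={\cal S}_k:=s{\cal T}_k'(s)-{\cal T}_k(s)$, absorbing the $\delta$-terms which vanish because $\delta^{1/(\mathfrak c+\gamma_{\rm Bog})}R_\delta$ is bounded in $L^{\mathfrak c+\gamma_{\rm Bog}}_{\rm loc}$ by \eqref{PBogd}). As in \cite{FNP}, the outcome is an identity whose left-hand side is exactly the difference of the two sides of \eqref{eq6.8+}, and whose right-hand side is a commutator term of the form
\begin{equation*}
\int_0^T\!\!\intO{\psi\phi\,\vu\cdot\Big(\overline{{\cal T}_k(R)}\,{\mathfrak R}\cdot((\vr+z)\vu\phi)-(\vr+z)\vu\cdot{\mathfrak R}(\overline{{\cal T}_k(R)}\phi)\Big)}\dt
\end{equation*}
minus the analogous expression with $\delta$-subscripts, in the limit $\delta\to0$. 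So the crux is to show this commutator difference is zero.

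The main obstacle, as always in this argument, is precisely that vanishing of the commutator term. I would handle it exactly as after \eqref{cvep!}: the $C_{\rm weak}(\overline I;L^\gamma(\Omega))$ convergence of ${\cal T}_k(R_\delta)$ (which follows from the renormalized continuity equation for ${\cal T}_k(R_\delta)$ and uniform bounds, giving equicontinuity in a negative Sobolev norm) and of $(\vr_\delta+z_\delta)\vu_\delta$ in $C_{\rm weak}([0,T];L^{2\gamma/(\gamma+1)}(\Omega;\R^3))$ give, for every $t$, weak convergence in $L^\gamma(\Omega)$ resp. $L^{2\gamma/(\gamma+1)}(\Omega;\R^3)$; the same then holds for the images under the Riesz operator ${\mathfrak R}$, which is bounded on every $L^p$, $1<p<\infty$. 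Then the Div-Curl type lemma for Riesz commutators (Proposition \ref{rieszcom}) yields, for each $t$, weak $L^{2\gamma/(\gamma+3)}(\Omega)$-convergence of the commutator bracket; combined with the compact embedding $L^{2\gamma/(\gamma+1)}(\Omega)\hookrightarrow\hookrightarrow W^{-1,2}(\Omega)$ and a uniform $L^\infty(I;W^{-1,2}(\Omega))$-bound, one upgrades this to strong convergence in $L^2(I;W^{-1,2}(\Omega))$, which pairs against the weak $L^2(I;W^{1,2}(\Omega))$-convergence of $\vu_\delta$ to kill the commutator difference. Here the condition $\gamma>3/2$ is exactly what is needed for $2\gamma/(\gamma+1)>1$ so that the compact embedding argument runs. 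This finishes the proof of Proposition \ref{p5+}.
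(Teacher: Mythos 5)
Your proof follows the paper's approach, which is to repeat the argument of Proposition \ref{p5} with ${\cal T}_k(R_\delta)$ in place of $R_\ep$, using the $L^\infty$-bound on the truncation to lower the integrability threshold from $\mathfrak{c}>9/2$ to $\gamma>3/2$ for the commutator step; the details you supply are consistent with that sketch. One small slip: the compact embedding $L^{2\gamma/(\gamma+1)}(\Omega)\hookrightarrow\hookrightarrow W^{-1,2}(\Omega)$ requires $2\gamma/(\gamma+1)>6/5$ (since $W^{1,2}(\Omega)\hookrightarrow L^q(\Omega)$ is compact only for $q<6$ in 3D), not $>1$; this is what yields the threshold $\gamma>3/2$, and you should also carry the exponent $2\gamma/(\gamma+1)$ rather than $2\gamma/(\gamma+3)$ through the pointwise-in-time estimate of the commutator bracket, since that stronger bound is what the $L^\infty$-control of ${\cal T}_k(R_\delta)$ provides.
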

\subsection{Oscillations defect measure}
\begin{prop}\label{odm}
The sequence $\vr_\delta$ satisfies
\begin{equation}\label{odm1}
{\rm osc}_{\gamma+1}[R_\delta\rightharpoonup R](Q_T):=\sup _{k>1}\limsup_{\delta\to 0}
\int_{Q_T}|{\cal T}_k(R_\delta)-{\cal T}_k(R)|^{\gamma+1}\, {\rm d}x\, {\rm d} t<\infty.
\end{equation}
\end{prop}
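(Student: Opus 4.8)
The plan is to mimic the monofluid argument of Feireisl et al.\ \cite{FNP} (see also \cite[Chapter 7]{NoSt} or \cite[Chapter 3]{FeNoB}), built on the effective viscous flux identity of Proposition \ref{p5+} and the monotone decomposition of the pressure recorded in (\ref{Ps1}). Throughout, for a sequence of the form $g(R_\delta,\ldots)$ I write $\overline{g(R,\ldots)}$ for its weak $L^1(Q_T)$ limit; all the limits used below exist because $P_\delta(R_\delta,Z_\delta)$ — hence $R_\delta^\gamma$, by (\ref{grP}) and the domination inequalities (\ref{eq5.1!}) — is equi-integrable in $L^1(Q_T)$ by the estimates (\ref{Bogd})--(\ref{PBogd+}) of the previous subsection (this is precisely why the global $L^1(Q_T)$ pressure bound, obtained via Lemma \ref{localp}, was established first), while ${\cal T}_k(R_\delta)$ is bounded in $L^\infty(Q_T)$. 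Set $X_k:=\limsup_{\delta\to 0}\int_{Q_T}|{\cal T}_k(R_\delta)-{\cal T}_k(R)|^{\gamma+1}\dx\dt$; since $|{\cal T}_k(R_\delta)-{\cal T}_k(R)|\le\min\{|R_\delta-R|,2k\}$ and $R_\delta-R$ is bounded in $L^1(Q_T)$, we have $X_k\le (2k)^\gamma\sup_\delta\|R_\delta-R\|_{L^1(Q_T)}<\infty$ for each fixed $k$, so the argument below is not circular.

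First I would record two elementary pointwise facts: for $a,b\ge 0$ and $\gamma\ge 1$, $(a-b)^\gamma\le|a^\gamma-b^\gamma|$ (monotonicity of $s\mapsto s^\gamma-(s-b)^\gamma$ on $[b,\infty)$), and, since $s\mapsto {\cal T}_k(s)$ from (\ref{Tk}) is nondecreasing, $1$-Lipschitz and ${\cal T}_k(a)-{\cal T}_k(b)$ has the sign of $a-b$,
$$
|{\cal T}_k(a)-{\cal T}_k(b)|^{\gamma+1}\le|a-b|^{\gamma}\,|{\cal T}_k(a)-{\cal T}_k(b)|\le(a^\gamma-b^\gamma)({\cal T}_k(a)-{\cal T}_k(b)).
$$
Applying this with $a=R_\delta$, $b=R$, integrating over $Q_T$, expanding the product and passing to the limit $\delta\to 0$ (all four pieces converge by the equi-integrability and $L^\infty$-boundedness noted above), I get
$$
X_k\le\int_{Q_T}\Big(\overline{R^\gamma{\cal T}_k(R)}-\overline{R^\gamma}\,\overline{{\cal T}_k(R)}\Big)\dx\dt+\int_{Q_T}\big(\overline{R^\gamma}-R^\gamma\big)\big(\overline{{\cal T}_k(R)}-{\cal T}_k(R)\big)\dx\dt,
$$
and the second integral is $\le 0$ because $\overline{R^\gamma}-R^\gamma\ge 0$ (convexity of $s\mapsto s^\gamma$) and $\overline{{\cal T}_k(R)}-{\cal T}_k(R)\le 0$ (concavity of ${\cal T}_k$), both by weak lower/upper semicontinuity.

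Next I would exploit (\ref{Ps1}): since $\mathfrak{s}=Z/_{\mathfrak d}R\in[\underline a,\overline a]$ (the domination $\underline a R_\delta\le Z_\delta\le\overline a R_\delta$ passes to the limit), one has $\Pi(R,t,x)=P(R,R\mathfrak{s}(t,x))=dR^\gamma+\pi(R,t,x)$ with $d>0$ and $R\mapsto\pi(R,t,x)$ nondecreasing, so $R^\gamma=d^{-1}(\Pi(R,t,x)-\pi(R,t,x))$ and
$$
d\Big(\overline{R^\gamma{\cal T}_k(R)}-\overline{R^\gamma}\,\overline{{\cal T}_k(R)}\Big)=\Big(\overline{\Pi(R,t,x){\cal T}_k(R)}-\overline{\Pi(R,t,x)}\,\overline{{\cal T}_k(R)}\Big)-\Big(\overline{\pi(R,t,x){\cal T}_k(R)}-\overline{\pi(R,t,x)}\,\overline{{\cal T}_k(R)}\Big).
$$
The $\pi$-bracket is $\ge 0$ a.e.\ by the classical weak-limit inequality for products of two nondecreasing functions of the same sequence (pass to the limit in $\int_A(\pi(R_\delta)-\pi(R_\eta))({\cal T}_k(R_\delta)-{\cal T}_k(R_\eta))\ge 0$), while the $\Pi$-bracket equals $(2\mu+\lambda)\big(\overline{{\cal T}_k(R)\Div\vu}-\overline{{\cal T}_k(R)}\,\Div\vu\big)$ by the effective viscous flux identity (\ref{eq6.8+}). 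Hence, with $2\mu+\lambda\ge\tfrac43\mu>0$,
$$
X_k\le\frac{2\mu+\lambda}{d}\int_{Q_T}\big(\overline{{\cal T}_k(R)\Div\vu}-\overline{{\cal T}_k(R)}\,\Div\vu\big)\dx\dt=\frac{2\mu+\lambda}{d}\lim_{\delta\to 0}\int_{Q_T}\big({\cal T}_k(R_\delta)-\overline{{\cal T}_k(R)}\big)\Div\vu_\delta\dx\dt.
$$
Estimating the last integral by Cauchy--Schwarz using $\|\Div\vu_\delta\|_{L^2(Q_T)}\le C$ from (\ref{ts1d}), then $\|{\cal T}_k(R)-\overline{{\cal T}_k(R)}\|_{L^2(Q_T)}\le\liminf_\delta\|{\cal T}_k(R_\delta)-{\cal T}_k(R)\|_{L^2(Q_T)}$ (weak lower semicontinuity), and finally the interpolation $\|f\|_{L^2(Q_T)}\le\|f\|_{L^1(Q_T)}^{1-a}\|f\|_{L^{\gamma+1}(Q_T)}^{a}$ with $a=\tfrac{\gamma+1}{2\gamma}\in(0,1)$ (here $\gamma>1$, a fortiori $\gamma\ge 2$) together with $\|{\cal T}_k(R_\delta)-{\cal T}_k(R)\|_{L^1(Q_T)}\le\|R_\delta-R\|_{L^1(Q_T)}\le C$, I arrive at $X_k\le C\,X_k^{1/(2\gamma)}$. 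Since $X_k<\infty$ and $1/(2\gamma)<1$, this yields $X_k\le C^{2\gamma/(2\gamma-1)}$ uniformly in $k$, which is exactly (\ref{odm1}).

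There is no single hard step here — it is the monofluid scheme — but two points need attention and are the reason earlier sections were arranged as they were. First, one must be able to use the effective flux identity and all the weak limits on the full cylinder $Q_T$, not merely on compact subsets of $\Omega$; this is guaranteed only by the global $L^1(Q_T)$ equi-integrability of the pressure obtained via Lemma \ref{localp} and (\ref{PBogd+}). Second, the passage from the $\Pi$-covariance back to the $R^\gamma$-covariance requires precisely the monotone splitting (\ref{Ps1}) along the ratio $\mathfrak{s}=Z/_{\mathfrak d}R$ furnished by hypotheses (\ref{dzP})--(\ref{drP}); the adiabatic threshold $\gamma\ge 2$ enters only to keep the interpolation exponent $a=\tfrac{\gamma+1}{2\gamma}$ in $(0,1)$ and to have $L^2(Q_T)$ between $L^1$ and $L^{\gamma+1}$.
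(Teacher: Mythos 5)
Your proof is correct and follows essentially the same route as the paper's: the same covariance decomposition exploiting convexity of $s\mapsto s^\gamma$ and concavity of ${\cal T}_k$, the same algebraic inequalities reducing $|{\cal T}_k(R_\delta)-{\cal T}_k(R)|^{\gamma+1}$ to the product $(R_\delta^\gamma-R^\gamma)({\cal T}_k(R_\delta)-{\cal T}_k(R))$, the same use of Proposition \ref{p5+} combined with the monotone splitting (\ref{Ps1}) and Lemma \ref{Lemma4}, and the same Cauchy--Schwarz plus interpolation step yielding $X_k\le C\,X_k^{1/(2\gamma)}$. Your preliminary observation that $X_k<\infty$ for each fixed $k$, which makes the self-improving inequality non-circular, is a small but useful precision that the paper leaves implicit.
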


Proposition \ref{odm} follows from the effective viscous flux identity derived in Proposition \ref{p5+}. To see this fact, we employ in (\ref{eq6.8+}) decomposition (\ref{Ps1}) in order to get 
\begin{equation}\label{last}
d\int_0^T\int_\Omega \Big(\overline{R^\gamma {\cal T}_k(R)}-\overline{R^\gamma}\;\overline{{\cal T}_k(R)}\Big)\, {\rm d}x\, {\rm d}t
+ \int_0^T\intO{\Big(
{ {\overline{\pi(R,t,x){\cal T}_k(R)}}-{\overline{\pi(R,t,x)}}\;\overline{{\cal T}_k(R)}}\Big)}\, {\rm d}t
\end{equation}
$$
=
(2\mu +\lambda)\limsup_{\delta\to 0} \Big[
\int_0^T\intO{\Big({\cal T}_k(R_\delta)-{{\cal T}_k(R)}\Big)\Div\vu_\delta}{\rm d}t+ \int_0^T\intO{\Big({\cal T}_k(R)-\overline{{\cal T}_k(R)}\Big)\Div\vu_\delta}{\rm d}t\Big].
$$

We first observe that the second integral at the left hand side is non negative according to Lemma \ref{Lemma4}, cf. (\ref{Ps1}). Second, we employ the H\"older inequality and interpolation together with the lower weak semi-continuity of norms in order to estimate the right hand side by 
{
\begin{equation}\label{dod8}
c  \Big[{\rm osc}_{\gamma+1}[R_\delta\rightharpoonup R](Q_T)\Big]^{\frac{1}{2\gamma}}
\end{equation}
with $c>0$ independent of $k$.
}

Concerning the firts term, we write,
$$
\int_0^T\int_\Omega \Big(\overline{R^\gamma {\cal T}_k(R)}-\overline{R^\gamma}\;\overline{{\cal T}_k(R)}\Big)\,{\rm d}x\,{\rm d}t
=\limsup_{\delta\to 0}\int_0^T\intO{ \Big(R_\delta^\gamma-R^\gamma\Big)\Big({\cal T}_k(R_\delta)-T_k(R)\Big)}\,{\rm d}t
$$
$$
+
\int_0^T\intO{\Big(R^\gamma-\overline{R^\gamma}\Big)\Big(\overline{{\cal T}_k(T)}-{\cal T}_k(R)\Big)}\,{\rm d}t
\ge \limsup_{\delta\to 0}\int_0^T\intO{ \Big|{\cal T}_k(R_\delta)-{\cal T}_k(R)\Big|^{\gamma+1}}\,{\rm d}t,
$$
where we have employed convexity of $R\mapsto R^\gamma$ and concavity of $R\mapsto {\cal T}_k(R)$ on $[0,\infty)$, and algebraic { inequalities}
$$
|a-b|^\gamma\le |a^\gamma-b^\gamma|\;\mbox{and}\;|a-b|\ge |T_k(a)-T_k(b)|, \; \ (a,b)\in [0,\infty)^2.
$$
Inserting the last inequality into  (\ref{last}) yields (in combination with estimates of the right hand side (\ref{dod8})) the statement of Proposition \ref{odm}.

\subsection{Strong convergence of density}
We know that continuity equation (\ref{renoep+}) is satisfied, in particular,  in the renormalized sense with renormalizing functions  $B={\cal L}_k$, and that (\ref{P3})$_{M=1,\mathfrak{r}=R}$ is satisfied in the renormalized sense with the same function ${\cal L}_k$, cf. Item 1. in Proposition \ref{LP2} and Item 1. in Remark \ref{rinflow}.
Using these equations with test function $\varphi=1$ (and noticing that
$z {\cal L}_k'(z) -{\cal L}_k(z) = {\cal T}_k(z)$), we get, in particular,
\begin{equation}\label{dod1*}
\intO{\Big({{\cal L}_k(R_\delta)}-{\cal L}_k(R)\Big)(\tau,x)\varphi(x)}
+
\int_0^\tau\int_{\Gamma^{\rm out}}{\Big({{\cal L}_k(R_\delta)}-{{\cal L}_k(R)}\Big)\vu_B\cdot\vc n{\rm d}S_x}{\rm d}t
\end{equation}
$$
= \int_0^\tau\int_{\Omega}({\cal T}_k(R)\Div \vu -\Ov{{\cal T}_k(R)}\Div \vu_\delta)\dx\dt \\
+ \int_0^\tau\int_\Omega (\Ov{{\cal T}_k(R)}-{\cal T}_k(R_\delta))\Div \vu_\delta \dx \dt 
$$
for all $\tau\in \overline I$. 

The { absolute} value of the first term at the right had side of the above identity is\footnote{Indeed, 
$$
\|{\cal T}_k(R)-\overline{{\cal T}_k(R)}\|_{L^1(Q_T)}\le \|{{\cal T}_k(R)}-R\|_{L^1(Q_T)}+\|\overline{{\cal T}_k(R)}-R\|_{L^1(Q_T)}
$$
$$
\le \|{\cal T}_k(R)-R\|_{L^1(Q_T)}+ \liminf_{\delta\to 0}\|{{\cal T}_k(R_\delta)}-R_\delta\|_{L^1(Q_T)}\to 0
$$
as $k\to\infty$.
} 
$$
\aleq 
\|\overline{{\cal T}_k(R)}-{\cal T}_k(R)\|_{L^2(Q_T)}\aleq \Big[{\rm osc}_{\gamma+1}[R_\delta\rightharpoonup R](Q_T)\Big]^{\frac {1}{2\gamma}}
\|\overline{{\cal T}_k(R)}-{\cal T}_k(R)\|_{L^1(Q_T)}\Big]^{\frac{\gamma-1}{2\gamma}}\to 0
$$
as $k\to\infty$,
while the second term can be expressed through Proposition \ref{p5+},
$$
\frac 1{2\mu +\lambda}\Big(\overline{\Pi(R,t,x){\cal T}_k(R)}-
\overline{\Pi(R,t,x)}\,\overline{{\cal T}_k(R)}\Big)
$$
and it is non negative due to Lemma \ref{Lemma4}.

Thus, letting in $\delta\to 0$ in identity (\ref{dod1*}) yields
$$
\intO{\Big(\overline{{\cal L}_k(R)}-{\cal L}_k(R)\Big)(\tau,x)}
+
\int_0^\tau\int_{\Gamma^{\rm out}}{\Big(\overline{{\cal L}_k(R)}-{{\cal L}_k(R)}\Big)\vu_B\cdot\vc n{\rm d}S_x}{\rm d}t\le 0.
$$
Recalling
\begin{equation}\label{LT}
\overline{{\cal L}_k(R)} \to\overline{R\log R},\;{\cal L}_k(R)\to R\log R
,\;\mbox{in $C_{weak}([0,T];L^q(\Omega))$ for any $1\leq q<\gamma$,}
\end{equation}
$$
\overline{{\cal L}_k(R)} \rightharpoonup\overline{R\log R},\;{\cal L}_k(R)\rightharpoonup R\log R
,\;\mbox{in $L^\gamma(0,T;L^q(\Gamma^{\rm out};|\vu_B\cdot\vc n|{\rm d}S_x))$ for any $1\leq q<\gamma$,}
$$
we arrive finally at
$$
\int_\Omega (\overline{R\log R}-R\log R)(\tau,\cdot)\dx
+
\int_0^\tau\int_{\Gamma^{\rm out}}(\overline{R\log R}-R\log R)\vu_B\cdot\vc n{\rm d}S_x{\rm d}t
\le 0
$$
which implies in virtue of the strict convexity of $R\mapsto R\log R$ on $[0,\infty)$
\begin{equation}\label{aed}
R_\delta\to R\;\mbox{a.a. in $Q_T$}\ \mbox{and a.a. in $I\times \Gamma^{\rm out}$}
\end{equation}
{ This relation in combination with (\ref{P2d}) yields also $
Z_\delta\to Z$ a.e. in $Q_T$ and a.e. in $I\times \Gamma^{\rm out}$. This yields, in particular,\footnote{The statement about a.a. convergence in $I\times \Gamma^{\rm out}$ of $R_\delta$ and $Z_\delta$ estblished in (\ref{aed}) is not needed at this place. It is stated for the sake of completeness.}
$$
\overline{P(R,Z)}= P(R,Z)
$$
and, in view of (\ref{E5!++d+}), finishes the proof of the momentum equation (\ref{eq2.8}).
}

\subsection{Energy inequality}

Now, it is rather standard to pass to the limit in the energy inequality (\ref{E7+}) and to obtain energy inequality (\ref{eq2.9}). In this respect, a few observations are in order: 
\begin{enumerate}
\item In order to pass to the limit in the term
$\intO{(\vr_\delta+z_\delta)|\vv_\delta|^2(\tau)}$, we rewrite it in the form $\intO{\mathbb{E}(\vr_\delta+z_\delta,\vc q_\delta)(\tau)}$, $\vc q_\delta=(\vr_\delta+z_\delta)\vv_\delta$, where 
$$
\mathbb{E}(r,\vc q)=
\left\{\begin{array}{c}
            \frac{{\vc q}^2}{r}\;\mbox{if $r> 0$}\\
            0\,\mbox{if $r=0$}\\
            \infty\,\mbox{otherwise}
           \end{array}\right\}
$$
is the lower semicontinuous convex function on $\R\times\R^3$, and use lower semicontinuity of the associated functional, cf. Lemma \ref{Lemma2}.
 \item In the passage in the term $\intO{H(R_\delta,Z_\delta)}$ we consider
 first the limit $\delta\to 0$ in $\frac 1{2\xi}\int_{\tau-\xi}^{\tau+\xi}\int_\Omega H(R_\delta,Z_\delta)$ ${\rm d}x{\rm d}t$, $0<\xi<\tau/2$ by using e.g. 
 Fatou's lemma (or lower weak semicontinuity of convex functional $H$) 
 and then let $\xi\to 0$ employing the Theorem on Lebsegue points.
 \item In the passage in the term $\int_0^\tau\int_{\Gamma^{\rm out}}
 H(R_\delta,Z_\delta)\vu_B\cdot\vc n{\rm d}S_x{\rm d}t$ we use the weak convergence induced by estimate (\ref{ts2d}) and the lower weak semicontinuity of convex functionals, cf. Lemma \ref{Lemma2}.\footnote{The a.a. convergence in $I\times \Gamma^{\rm out}$ established in (\ref{aed}) is not needed in the setting when $H$ is convex. It would however be necessary if $H$ is not convex, cf. Remark 
 \ref{RemH}, namely assumption (\ref{convH+}).}
 \item In the passage in the term $\int_0^\tau\int_{\Omega}
 P(R_\delta,Z_\delta){\rm div}\vu_B{\rm d}x{\rm d}t$ we consider
 a.e. convergence deduced in the previous section in combination
 with estimate (\ref{Bogd}) and complete with estimate
 (\ref{PBogd+}) to pass to the limit near the boundary.
\end{enumerate}

This finishes the proof of Theorem \ref{theorem1}.

\section{From the "academic" to the "realistic" bifluid system}\label{Realistic}

We set
$$
F(\alpha)=\frac 1 {f(\alpha)},\; G(\alpha)=\frac 1 {g(\alpha)},
$$
where clearly $F$, $G$ are strictly monotone, { strictly positive} functions on interval (0,1), and denote
$$
\underline F={\rm min}\{F(\underline\alpha), F(\overline\alpha)\},\; \overline F= {\rm max}\{F(\underline\alpha), F(\overline\alpha)\}.
$$
Similarly, we define numbers $\underline G$, $\overline G$ as above replacing function $F$ by $G$. Then, in particular,
$F: [\underline\alpha,\overline\alpha]\to [\underline F,\overline F]$, 
$G: [\underline\alpha,\overline\alpha]\to [\underline G,\overline G]$ are  $C^1$-diffeomorphisms.

We will use Theorem \ref{theorem1} with initial and boundary conditions
$$
\vr_0,\; z_0,\; R_0=f(\alpha_0)\vr_0,\; Z_0=g(\alpha_0)z_0,\; \vu_0,
$$
$$
\vr_B,\; z_B,\; R_B=f(\alpha_B)\vr_B,\; Z_B=g(\alpha_B)z_B,\; \vu_B.
$$
such that 
$$
\underline a R_0\le Z_0\le \overline a R_0.
$$
We easily verify that
$$
\underline F R_0\le \vr_0\le \overline F R_0,\; \underline G Z_0\le z_0\le \overline G Z_0.
$$

Theorem \ref{theorem1} guarantees existence of  a bounded energy weak solution 
$(\vr,z,R,Z,\vu)$ in the corresponding regularity class described in that theorem which satisfies, in particular, the domination relations
$$
\forall t\in\overline I,\;\underline F R(t)\le \vr(t)\le \overline F R(t),\;
\underline G Z(t)\le z(t)\le \overline G Z(t)\;\mbox{a.e. in $\Omega$},
$$
$$
\mbox{for a.a. $t\in I$},\;\underline F R(t)\le \vr(t)\le \overline F R(t),\;
\underline G Z(t)\le z(t)\le \overline G Z(t)\;\mbox{a.e. in $\partial\Omega$}.
$$
We set 
\begin{equation}\label{talpha}
\forall t\in\overline I,\;\alpha(t)=F^{-1}\Big(\vr(t)/_{\mathfrak{d}} R(t)\Big),\; \tilde\alpha(t)={ G^{-1}}\Big(z(t)/_{\mathfrak{d}} Z(t)\Big)\;\mbox{a.e. in $\Omega$},
\end{equation}
$$
\mbox{for a.a. $(t,x)\in I\times\Gamma^{\rm out}$},\ 
\alpha(t,x)=F^{-1}\Big(\vr(t,x)/_{\mathfrak{d}} R(t,x)\Big),\; \tilde\alpha(t,x)={ G^{-1}}\Big(z(t,x)/_{\mathfrak{d}} Z(t,x).
$$
Each of the quantities $\alpha$ and $\tilde\alpha$ satisfies the pure transport equation (\ref{eq2.7+}) with transporting velocity $\vu$ and with the same  initial and boundary  conditions. By Item 2. of Proposition
\ref{LP2}, the same is true for $f(\alpha)$ and $g(\alpha)$. Therefore,
$$
\forall t\in\overline I,\;R(t)=(f(\alpha)\vr)(t)=(f(\tilde\alpha)\vr)(t),\;
Z(t)=(g(\tilde\alpha)z)(t)=g(\alpha)z(t),
$$
$$
\mbox{for a.a. $(t,x)\in I\times\Gamma^{\rm out}$},\ \;R(t,x)=(f(\alpha)\vr)(t,x)=(f(\tilde\alpha)\vr)(t,x),\;
Z(t,x)=(g(\tilde\alpha)z)(t,x)=g(\alpha)z(t,x),
$$
where we have used the almost uniqueness established in Corollary \ref{cont-tr}. 

Consequently:
\begin{enumerate}
 \item In the momentum equation (\ref{eq2.8}),
 $$
 \int_0^\tau\intO{P(R,Z){\rm div}\varphi}{\rm d}t=
 \int_0^\tau\intO{P(f(\alpha)\vr,g(\alpha)z){\rm div}\varphi}{\rm d}t.
 $$
 This yields momentum equation (\ref{eq2.8-}).
 \item In the energy inequality (\ref{eq2.9}),
 $$
 \mbox{for all $\tau\in\overline I$},\;
 [\intO{H(R,Z)}](\tau)=
 [\intO{H(f(\alpha)\vr,g(\alpha)z)}](\tau),
 $$
 $$
 \int_0^\tau\intO{P(R,Z){\rm div}\vu_B}{\rm d}t=
 \int_0^\tau\intO{P(f(\alpha)\vr,g(\alpha)z){\rm div}\vu_B}{\rm d}t
 $$
 while
 $$
  \int_0^\tau\int_{\Gamma^{\rm out}}H(R,Z)\vu_B\cdot\vc n{\rm d}S_x{\rm d}t= \int_0^\tau\int_{\Gamma^{\rm out}}
  H(f(\alpha)\vr,g(\alpha)z)\vu_B\cdot\vc n{\rm d}S_x{\rm d}t.
 $$
 This yields energy inequality (\ref{eq2.9-}).
 \end{enumerate}
 Theorem \ref{theorem2} is thus proved.

\section{Appendix}\label{Appx}

\subsection{Some elements of functional, convex and harmonic analysis}\label{Appx1}

We recall some properties
of the pseudodifferential operator
\begin{equation}\label{calA}
{\cal A}[u]=\Grad\Delta^{-1}[u] := \mathcal{F}^{-1}_{\xi \to x} \left[
\frac{-{\rm i}\xi}{|\xi|^2 } \mathcal{F}_{x \to \xi} [u] \right],\;u\in C^\infty_c(\Omega)\subset C^\infty_c (\R^3),
\end{equation}
where
${\cal F}_{x\to\xi}$ denotes the Fourier transform}.

We have the following lemma (a consequence of the H\"ormander-Michlin multiplier theorem and Sobolev imbeddings):
\begin{lem}\label{LcalA}
For all $u\in C^{\infty}_c(\Omega)$ there holds,
$$
\|{\cal A}[u]\|_{W^{1,p}(\Omega)}\aleq \|u\|_{L^p(\Omega)},\; 1<p<\infty.
$$
Consequently,
the operator ${\cal A}$ admits an extension (denoted by the same symbol) which is a continuous linear operator from
$$
L^p(\Omega)\;\mbox{to $W^{1,p}(\Omega)$}, \ 1<p<\infty
$$
\end{lem}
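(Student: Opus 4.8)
The plan is to prove Lemma \ref{LcalA} in two moves: first establish the $W^{1,p}$-bound for smooth compactly supported data, then extend by density. The key observation is that the operator ${\cal A}=\Grad\Delta^{-1}$ acts componentwise as a Fourier multiplier with symbol $m_j(\xi)=-{\rm i}\xi_j/|\xi|^2$, and that the mixed second derivatives $\partial_k{\cal A}_j$ correspond to the symbols $\xi_j\xi_k/|\xi|^2$, which are precisely the (Calder\'on--Zygmund, Riesz-type) symbols that are homogeneous of degree $0$ and smooth away from the origin.

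First I would fix $u\in C^\infty_c(\Omega)$ and regard it as an element of $C^\infty_c(\R^3)$ by extension by zero. For the top-order part, I would apply the H\"ormander--Michlin multiplier theorem to each symbol $\xi_j\xi_k/|\xi|^2$: these functions are $C^\infty$ on $\R^3\setminus\{0\}$ and satisfy the Mikhlin condition $|\partial^\alpha m(\xi)|\aleq |\xi|^{-|\alpha|}$ for all multi-indices $\alpha$ up to order $2$ (indeed all orders), since they are $0$-homogeneous. Hence each second-derivative component $\partial_k{\cal A}_j[u]$ is bounded in $L^p(\R^3)$ by $c_p\|u\|_{L^p(\R^3)}=c_p\|u\|_{L^p(\Omega)}$ for every $1<p<\infty$. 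This gives control of $\|\Grad{\cal A}[u]\|_{L^p}$. For the zeroth-order part, I would bound $\|{\cal A}[u]\|_{L^p(\Omega)}$ directly: since $\Omega$ is bounded, one can use the Sobolev/Gagliardo--Nirenberg interpolation or simply the Hardy--Littlewood--Sobolev inequality applied to the Riesz potential representation ${\cal A}_j[u](x)=c\int \frac{(x-y)_j}{|x-y|^3}u(y)\,{\rm d}y$, together with H\"older on the bounded set $\Omega$, to conclude $\|{\cal A}[u]\|_{L^p(\Omega)}\aleq\|u\|_{L^p(\Omega)}$; alternatively, combine the gradient bound with the Poincar\'e inequality after noting ${\cal A}[u]$ has suitable decay. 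Adding the two estimates yields $\|{\cal A}[u]\|_{W^{1,p}(\Omega)}\aleq\|u\|_{L^p(\Omega)}$.

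Finally, since $C^\infty_c(\Omega)$ is dense in $L^p(\Omega)$ for $1<p<\infty$ and ${\cal A}$ is linear with the displayed bound on this dense subspace, ${\cal A}$ extends uniquely to a bounded linear operator $L^p(\Omega)\to W^{1,p}(\Omega)$, which is the asserted conclusion.

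The only genuinely delicate point is the verification that the relevant symbols meet the hypotheses of the H\"ormander--Michlin theorem and the bookkeeping that turns an $\R^3$-estimate into an estimate on the bounded domain $\Omega$ (the restriction is harmless since extension by zero does not increase the $L^p$ norm and restriction does not increase it either); everything else is a standard application of classical harmonic analysis, so I would not expect any real obstacle.
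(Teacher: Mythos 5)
Your proposal follows exactly the route the paper itself indicates (the lemma is stated there without a written proof, merely flagged as ``a consequence of the H\"ormander--Michlin multiplier theorem and Sobolev imbeddings''): Mikhlin for the gradient $\partial_k\mathcal{A}_j$ with $0$-homogeneous symbols $\xi_j\xi_k/|\xi|^2$, a Riesz-potential/Sobolev-type estimate for the zeroth-order term, and extension by density. One small point worth tightening in your write-up: the Hardy--Littlewood--Sobolev step only directly covers $p<3$; for $p\ge 3$ the cleanest fix on the bounded set $\Omega$ is to note that the kernel $c(x-y)_j/|x-y|^3$ restricted to a ball containing $\Omega-\Omega$ lies in $L^1$, so Young's convolution inequality gives $\|\mathcal{A}[u]\|_{L^p(\Omega)}\aleq\|u\|_{L^p(\Omega)}$ for every $1<p<\infty$ at once.
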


The next lemma deals with a particular solution of the equation ${\rm div}\vc w=r$ in  $\Omega$,  $\vc w|_{\partial\Omega}=0$ 
called the Bogovskii solution, \cite{Bog}.

\begin{lem}\label{LBog}
Let $\Omega$ be a bounded Lipschitz domain. There exists a linear operator ${\cal B}$ defined on ${C}^\infty_c(\Omega)$
with the following properties
\begin{equation}\label{Z0}
{\cal B}(C^\infty_c(\Omega))\subset  C^\infty_c(\Omega)
\end{equation}
\begin{equation}\label{Z2}
\forall r\in {C^\infty_c(\Omega)},\quad\left\| \mathcal{B}[r] \right\|_{W^{1,p}(\Omega;\R^3)} \aleq \| r\|_{L^p(\Omega)},\ 1 < p < \infty,
\end{equation}
\begin{equation}\label{Z3}
\forall {r}\in {C^\infty_c(\Omega;\R^4)},\quad
\left\| \mathcal{B}[{r}] \right\|_{L^q(\Omega;\R^3)} \aleq \| {r} \|_{[W^{1,q'}(\Omega)]^*},\ 1 < q < \infty.
\end{equation}
\begin{equation}\label{Z1}
\Div \mathcal {B}[r] = r-\frac 1{|\Omega|}\intO{r}. 
\end{equation}
Consequently, the operator ${\cal B}$ admits an extension (denoted by the same symbol) which is a contiuous linear operator
from
$$
L^p(\Omega)\mapsto W_0^{1,p}(\Omega), \ 1<p<\infty,
$$
and from
$$
[W^{1,q'}]^*(\Omega)\mapsto L^q(\Omega),\ \mbox{where }\
q\in \left\{
\begin{array}{c}
 (1, \frac{3p}{3-p})\;\mbox{if $1<p<3$}\\
 (1,\infty)\; \mbox{if $p\ge 3$}
\end{array}
\right\}.
$$
Moreover, ${\cal B}$ satisfies (\ref{Z1}) for any $r\in L^p(\Omega)$.
\end{lem}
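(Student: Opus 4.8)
\textbf{Proof plan for Lemma \ref{LBog} (Bogovskii operator).}
The plan is to follow the classical construction due to Bogovskii, as presented for Lipschitz domains e.g.\ in Galdi's monograph or in \cite{FeNoB}. First I would reduce to the case of a domain that is star-shaped with respect to a ball: since $\Omega$ is a bounded Lipschitz domain, it admits a finite covering $\Omega=\cup_{i=1}^{N}\Omega_i$ where each $\Omega_i$ is star-shaped with respect to an open ball $B_i\Subset\Omega_i$; one then chooses a partition of unity $\{\psi_i\}$ subordinate to this covering and, for $r\in C^\infty_c(\Omega)$ with $\intO{r}=0$, writes $r=\sum_i r_i$ with $r_i$ supported in $\Omega_i$ but \emph{not} necessarily of zero mean. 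The standard trick is to correct the means by transporting mass along chains of overlapping subdomains: one replaces $r_i$ by $\tilde r_i$ still supported in $\Omega_i$, with $\intO{\tilde r_i}=0$, such that $\sum_i\tilde r_i=r$, using auxiliary fixed functions $\omega_{ij}\in C^\infty_c(\Omega_i\cap\Omega_j)$ of unit mean. This reduction is routine and I would only sketch it.

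Next, on a single domain $\Omega_i$ star-shaped with respect to $B_i$, one writes down the explicit Bogovskii formula
\[
\mathcal{B}_i[\tilde r_i](x)=\int_{\Omega_i}\tilde r_i(y)\,\frac{x-y}{|x-y|^3}\Big(\int_{|x-y|}^{\infty}\omega_i\Big(y+\xi\tfrac{x-y}{|x-y|}\Big)\xi^{2}\,{\rm d}\xi\Big){\rm d}y,
\]
where $\omega_i\in C^\infty_c(B_i)$ has unit integral. One checks directly that $\mathcal{B}_i$ maps $C^\infty_c(\Omega_i)$ into $C^\infty_c(\Omega_i)$ (this is where star-shapedness and $\mathrm{supp}\,\tilde r_i\Subset\Omega_i$ enter) and that $\Div\mathcal{B}_i[\tilde r_i]=\tilde r_i$. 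The kernel has the structure of a singular integral operator of convolution type plus a lower order term, so the Calder\'on--Zygmund theory gives the $W^{1,p}$ estimate \eqref{Z2} on $\Omega_i$, $1<p<\infty$; summing over $i$ and absorbing the mean-correction operators (which are smoothing, hence bounded $L^p\to W^{1,p}$) yields \eqref{Z2} on $\Omega$, and \eqref{Z1} follows from $\Div\mathcal{B}=\sum_i\tilde r_i=r-\frac1{|\Omega|}\intO{r}$. Property \eqref{Z0} is immediate from the construction. The negative-norm estimate \eqref{Z3} is obtained by duality: for $\mathbf{r}\in C^\infty_c(\Omega;\R^4)$ one tests $\mathcal{B}[\mathbf r]$ against $L^{q'}$ functions and uses that the formal adjoint of $\mathcal{B}$ maps $W^{1,q'}$ boundedly into $L^{q'}$ (equivalently, one differentiates the kernel representation by parts once), giving $\|\mathcal{B}[\mathbf r]\|_{L^q}\lesssim\|\mathbf r\|_{[W^{1,q'}]^*}$.

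Finally, the extension statements are a soft consequence: $C^\infty_c(\Omega)$ is dense in $L^p(\Omega)$ and $C^\infty_c(\Omega;\R^4)$ is dense in $[W^{1,q'}(\Omega)]^*$ for the stated range of $q$ (the latter via the Sobolev embedding $W^{1,q'}\hookrightarrow L^{(q')^*}$, which identifies the admissible exponents $q<\frac{3p}{3-p}$ when $1<p<3$ and $q<\infty$ when $p\ge 3$), so the bounded operator $\mathcal{B}$ extends uniquely and continuously, and \eqref{Z1} passes to the limit in $\mathcal D'(\Omega)$. The one genuine technical point — the only place real work is hidden — is the Calder\'on--Zygmund bound \eqref{Z2} for the singular kernel on a star-shaped-with-respect-to-a-ball domain uniformly in the geometry, together with the bookkeeping of the mean-correction chains; everything else is bounded operators composed with density. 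Since this is entirely classical, I would cite \cite{Bog}, \cite[Section III.3]{NoSt} (or the analogous chapter in \cite{FeNoB}) for the details rather than reproduce them.
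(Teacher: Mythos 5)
Your plan reproduces the same classical argument the paper itself delegates to the literature (Galdi \cite{Ga} for (\ref{Z0}), (\ref{Z2}), (\ref{Z1}); Geissert--Heck--Hieber \cite{GeHeHi} for (\ref{Z3}); density plus Sobolev embedding for the extension statements), so it is aligned with the paper's treatment. One minor slip in your duality step: to obtain $\|\mathcal{B}[\mathbf{r}]\|_{L^q}\lesssim\|\mathbf{r}\|_{[W^{1,q'}(\Omega)]^*}$ you need the formal adjoint $\mathcal{B}^*$ to map $L^{q'}(\Omega)$ boundedly \emph{into} $W^{1,q'}(\Omega)$ (so that $\|\mathcal{B}[\mathbf{r}]\|_{L^q}=\sup_{\|g\|_{L^{q'}}\le 1}|\langle\mathbf{r},\mathcal{B}^*g\rangle|\le\|\mathbf{r}\|_{[W^{1,q'}]^*}\sup\|\mathcal{B}^*g\|_{W^{1,q'}}$), not $W^{1,q'}$ into $L^{q'}$ as you wrote.
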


On Lipschitz domains, the Bogovskii solution is given by an explicit formula involving a singular kernel which is particularly "accessible" if the domain is star-shaped and which allows to provide the proof of Lemma \ref{LBog} via an explicit
(but involved) calculation. We refer to Galdi \cite[Chapter 3]{Ga} for a detailed proof of the properties (\ref{Z0}), (\ref{Z2}), (\ref{Z1}), and to Geissert, Heck and Hieber \cite{GeHeHi} for (\ref{Z3}).  The admissible values $p,q$ follow
from (\ref{Z0}--\ref{Z3}), the  density of $C^\infty_c(\Omega)$
in $[W^{1,q'}]^*(\Omega)$ and Sobolev imbeddings.

The next theorem involving commutator of Riesz operators may be seen as a consequence
of the celebrated Div-Curl lemma above, see Murat, Tartar \cite{Mu} and
\cite[Section 6]{EF70} or \cite[Theorem 10.27]{FeNoB} for the below adapted formulation

\begin{lem}\label{rieszcom} Let
$$
\vc{ V}_n \rightharpoonup \vc{ V} \ \mbox{ in}\ L^p(\R^3; \R^3),
$$
$$
\vc{ U}_n \rightharpoonup \vc{ U} \ \mbox{ in}\ L^q(\R^3; \R^3),
$$
where $ \frac{1}{p} + \frac{1}{q} = \frac{1}{s} < 1$. Then
$$
\vc{ U}_n \cdot \Grad\Delta^{-1}\Div[\vc{V}_n ] -\vc V_n\cdot \Grad\Delta^{-1}\Div
[\vc{
U}_n]\cdot \vc{ V}_n \rightharpoonup \vc{ U}\cdot \Grad\Delta^{-1}\Div[\vc{ V}] -
\vc V\cdot\Grad\Delta^{-1}\Div[\vc{ U}] \ \mbox{ in}\ L^s(\R^3).
$$
\end{lem}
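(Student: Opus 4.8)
The final statement to prove is Lemma~\ref{rieszcom}, the commutator lemma for Riesz operators (a compensated-compactness statement in the spirit of the Div-Curl lemma).

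\medskip

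\textbf{Overall approach.} The plan is to reduce the claim to the classical Div-Curl lemma of Murat and Tartar. Write $\mathcal{R}_i = \partial_i \Delta^{-1/2}$ for the Riesz transforms, so that $\Grad\Delta^{-1}\Div = (\mathcal{R}_i\mathcal{R}_j)_{ij}$ as a matrix of second-order Riesz operators; each $\mathcal{R}_i\mathcal{R}_j$ is a Calder\'on--Zygmund operator, hence bounded on $L^r(\R^3)$ for every $1<r<\infty$. The $j$-th component of the quantity in question is
\[
U^j_n\,[\Grad\Delta^{-1}\Div \vc V_n]_j - V^j_n\,[\Grad\Delta^{-1}\Div \vc U_n]_j
= \sum_i\Big( U^j_n\,\mathcal{R}_j\mathcal{R}_i V^i_n - V^j_n\,\mathcal{R}_j\mathcal{R}_i U^i_n\Big).
\]
The key algebraic observation, going back to the proof in \cite{EF70}, is that this expression can be rewritten as a sum of terms each of which is a product of a curl-free field and a divergence-free field. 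Concretely, introduce the potentials $w_n := \Delta^{-1}\Div\vc V_n$ and $\zeta_n := \Delta^{-1}\Div\vc U_n$; then $\Grad w_n$ and $\Grad\zeta_n$ are curl-free, while $\vc V_n - \Grad w_n$ and $\vc U_n - \Grad\zeta_n$ are divergence-free. Expanding
\[
\vc U_n\cdot\Grad w_n - \vc V_n\cdot\Grad\zeta_n
= (\vc U_n - \Grad\zeta_n)\cdot\Grad w_n - (\vc V_n - \Grad w_n)\cdot\Grad\zeta_n,
\]
since $\Grad\zeta_n\cdot\Grad w_n$ cancels, displays the left-hand side as a difference of two Div-Curl pairings.

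\medskip

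\textbf{Key steps, in order.} First I would record the $L^r$-boundedness of the second Riesz operators (H\"ormander--Michlin), which gives $\Grad w_n \rightharpoonup \Grad w$ in $L^p$ and $\Grad\zeta_n\rightharpoonup\Grad\zeta$ in $L^q$, together with $\vc V_n - \Grad w_n \rightharpoonup \vc V - \Grad w$ divergence-free in $L^p$ and $\vc U_n - \Grad\zeta_n\rightharpoonup \vc U - \Grad\zeta$ divergence-free in $L^q$; all the limit identities pass because the operators are linear and bounded. Second, I would apply the Div-Curl lemma to each of the two pairings: $(\vc U_n - \Grad\zeta_n)$ is divergence-free and $\Grad w_n$ is curl-free, with Hölder-conjugate summabilities $1/q+1/p = 1/s < 1$, so $(\vc U_n-\Grad\zeta_n)\cdot\Grad w_n \rightharpoonup (\vc U-\Grad\zeta)\cdot\Grad w$ in $L^s$; symmetrically for the second term. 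Third, I would reassemble: adding the two weak limits and undoing the algebraic identity above (now at the level of the limits, using $\Grad w\cdot\Grad\zeta$ cancellation again) yields exactly $\vc U\cdot\Grad\Delta^{-1}\Div\vc V - \vc V\cdot\Grad\Delta^{-1}\Div\vc U$ in $L^s(\R^3)$. A minor point to address is that the Div-Curl lemma in the form usually quoted is local (convergence in $\mathcal{D}'$ or $L^s_{\rm loc}$); since everything is globally bounded in $L^s(\R^3)$ with fixed $s>1$, a density/uniform-integrability argument upgrades $L^s_{\rm loc}$-weak convergence to $L^s(\R^3)$-weak convergence.

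\medskip

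\textbf{Main obstacle.} There is no serious analytic obstacle here—the statement is essentially a packaging of two known results. The one place requiring care is the bookkeeping with the potentials $w_n$, $\zeta_n$: one must make sure that $\Delta^{-1}\Div$ applied to an $L^p(\R^3)$ field is well-defined (it is, modulo the standard caveat that $\Grad\Delta^{-1}\Div$ is the genuinely bounded object, while $\Delta^{-1}\Div$ itself is only defined up to affine functions, which is harmless since only its gradient enters), and that the curl-free/divergence-free structure is exactly as claimed. The other mild technical point, as noted, is the passage from the local conclusion of the Div-Curl lemma to the global $L^s(\R^3)$ statement; this is routine given the uniform global bounds. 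In the write-up I would simply cite \cite{Mu}, \cite[Section 6]{EF70} and \cite[Theorem 10.27]{FeNoB} for the Div-Curl lemma and the H\"ormander--Michlin theorem and present the two-line algebraic identity as the heart of the argument.
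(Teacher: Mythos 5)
Your proposal is correct and follows essentially the same route the paper indicates: the paper offers no proof of its own but refers to the Div-Curl lemma (\cite{Mu}, \cite[Section 6]{EF70}, \cite[Theorem 10.27]{FeNoB}), and your Helmholtz-type splitting $\vc V_n = \Grad w_n + (\vc V_n - \Grad w_n)$ with $w_n=\Delta^{-1}\Div\vc V_n$ (and likewise for $\vc U_n$), followed by the two-line algebraic cancellation and two applications of Div-Curl, is exactly the standard argument those references rely on. The only slip is cosmetic — the displayed expression is a scalar, not a vector, so it has no ``$j$-th component'' (you mean the $j$-th summand); and note the paper's statement itself carries a typo, the spurious trailing $\cdot\vc V_n$, which you have correctly read through.
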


Finally, the last two lemmas are well known results from convex analysis, see e.g. Lemma 2.11 and Corollary 2.2 in
Feireisl \cite{EF70}.

\begin{lem}\label{Lemma2}
Let $O \subset \R^d$, $d\ge 2$, be a measurable set and $\{ \vc{v}_n
\}_{n=1}^{\infty}$ a sequence of functions in $L^1(O; \R^M)$ such
that
$$
\vc{v}_n \rightharpoonup \vc{v} \ \mbox{ in}\ L^1(O; \R^M).
$$
Let $\Phi: R^M \to (-\infty, \infty]$ be a lower semi-continuous
convex function such that $\Phi(\vc v_n)$ is bounded in  $L^1({ O})$.

Then $\Phi(\vc v):O\mapsto R$ is integrable and
$$
\int_{O} \Phi(\vc v){\rm d} x\le \liminf_{n\to\infty} \int_{O} \Phi(\vc v_n){\rm d} x.
$$
\end{lem}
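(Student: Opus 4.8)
The statement to be proved is Lemma \ref{Lemma2}, the classical lower semicontinuity result of convex analysis: if $\vc v_n \rightharpoonup \vc v$ in $L^1(O;\R^M)$, $\Phi$ is lower semicontinuous and convex on $\R^M$, and $\Phi(\vc v_n)$ is bounded in $L^1(O)$, then $\Phi(\vc v)$ is integrable with $\int_O \Phi(\vc v)\,{\rm d}x \le \liminf_n \int_O \Phi(\vc v_n)\,{\rm d}x$.

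\medskip

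\noindent\textbf{Approach.} The plan is to reduce the statement to Mazur's lemma together with Fatou's lemma. First I would recall that a lower semicontinuous convex function $\Phi:\R^M\to(-\infty,\infty]$ which is somewhere finite is the supremum of an at most countable family of affine functions; in particular there exist $\vc a\in\R^M$ and $b\in\R$ with $\Phi(\vc w)\ge \vc a\cdot\vc w + b$ for all $\vc w$, so that the negative part of $\Phi(\vc v_n)$ is controlled by $|\vc a||\vc v_n|+|b|$ and hence $\Phi(\vc v_n)^-$ is bounded in $L^1$ by the weak convergence (which gives boundedness of $\|\vc v_n\|_{L^1}$). Combined with the hypothesis that $\Phi(\vc v_n)$ is bounded in $L^1$, this shows $\Phi(\vc v_n)^+$ is bounded in $L^1$ as well.

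\medskip

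\noindent\textbf{Key steps.} Set $L:=\liminf_n \int_O \Phi(\vc v_n)\,{\rm d}x$, which is finite by the above, and pass to a subsequence (not relabeled) realizing the liminf. By Mazur's lemma, there is a sequence of convex combinations
$$
\vc w_k = \sum_{j=k}^{N_k} \theta_j^{(k)} \vc v_j,\qquad \theta_j^{(k)}\ge 0,\ \sum_{j=k}^{N_k}\theta_j^{(k)}=1,
$$
such that $\vc w_k \to \vc v$ strongly in $L^1(O;\R^M)$, hence along a further subsequence $\vc w_k \to \vc v$ a.e.\ in $O$. By convexity of $\Phi$ and Jensen's inequality applied to the finite convex combination,
$$
\Phi(\vc w_k) \le \sum_{j=k}^{N_k}\theta_j^{(k)}\,\Phi(\vc v_j)\quad\text{a.e. in }O,
$$
and integrating gives $\int_O \Phi(\vc w_k)\,{\rm d}x \le \sup_{j\ge k}\int_O \Phi(\vc v_j)\,{\rm d}x \to L$. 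Now apply Fatou's lemma to the functions $\Phi(\vc w_k) - \big(\vc a\cdot\vc w_k + b\big)\ge 0$, using that $\vc a\cdot\vc w_k + b \to \vc a\cdot\vc v + b$ in $L^1(O)$ (so its integral converges) and $\Phi(\vc w_k)\to\Phi(\vc v)$ a.e.\ by lower semicontinuity of $\Phi$ — more precisely $\liminf_k \Phi(\vc w_k)\ge \Phi(\vc v)$ a.e., which suffices for Fatou. This yields $\int_O \Phi(\vc v)\,{\rm d}x \le \liminf_k \int_O \Phi(\vc w_k)\,{\rm d}x \le L$, proving both the integrability of $\Phi(\vc v)$ (its positive part is integrable, its negative part is dominated) and the desired inequality.

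\medskip

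\noindent\textbf{Main obstacle.} The only genuinely delicate point is the bookkeeping around the value $+\infty$: one must ensure $\Phi$ is not identically $+\infty$ (otherwise the statement is vacuous or the affine minorant argument fails), and one must handle the a.e.\ convergence $\Phi(\vc w_k)\to\Phi(\vc v)$ correctly — lower semicontinuity only gives $\liminf$, which is exactly what Fatou needs, so no continuity of $\Phi$ is required. Everything else is a routine assembly of Mazur, Jensen, and Fatou; since the result is explicitly cited from \cite[Lemma 2.11]{EF70}, one may alternatively simply invoke that reference, but the argument above is the self-contained proof.
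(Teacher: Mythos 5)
Your proof is correct, but note that the paper does not actually prove this lemma: it is simply cited as \cite[Lemma 2.11]{EF70}, so there is no ``paper proof'' to compare with. Your argument is the standard self-contained route (an affine minorant to control the sign, Mazur's lemma to upgrade weak $L^1$ convergence to strong convergence of convex combinations, finite-dimensional convexity to compare $\Phi(\vc w_k)$ with the convex combination of $\Phi(\vc v_j)$, then Fatou applied to the shifted nonnegative sequence $\Phi(\vc w_k)-(\vc a\cdot\vc w_k+b)$), and all the steps are assembled correctly, including the observation that Fatou only needs $\liminf_k\Phi(\vc w_k)\ge\Phi(\vc v)$ a.e., which is exactly what lower semicontinuity delivers along the a.e.\ convergent subsequence.

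Two small points worth flagging. First, the step where you integrate the affine minorant and pass to the limit, $\int_O(\vc a\cdot\vc w_k+b)\to\int_O(\vc a\cdot\vc v+b)$, uses that the constant $b$ is integrable over $O$, i.e.\ implicitly $|O|<\infty$; this is the case in every application in the paper (and is the standing assumption in the cited source), but as the lemma is phrased for an arbitrary measurable set $O$ one should either add this hypothesis or normalize the affine minorant. Second, your opening remark that a proper lsc convex function is the supremum of ``an at most countable'' family of affine functions is not needed and is slightly overstated as phrased; what the argument actually uses, and what is unconditionally true, is merely the existence of a single affine minorant, which follows from properness and lower semicontinuity via Hahn--Banach. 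Neither point affects the validity of the proof.
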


\begin{lem}\label{Lemma3}

Let $O \subset \R^d$, $d\ge 2$ be a measurable set and $\{ \vc{v}_n
\}_{n=1}^{\infty}$ a sequence of functions in $L^1(O; \R^M)$ such
that
$$
\vc{v}_n \to \vc{v} \ \mbox{weakly in}\ L^1(O; \R^M).
$$
Let $\Phi: \R^M \to (-\infty, \infty]$ be a lower semi-continuous
convex function such that $\Phi (\vc{v}_n ) \in L^1(O)$ for
any $n$, and
$$
\Phi (\vc{v}_n) \to \Ov{\Phi(\vc{v})}
\ \mbox{weakly in}\ L^1(O).
$$

Then

\begin{equation}\label{FNeb4}
\Phi (\vc{v})  \leq \Ov{\Phi (\vc{v})} \ \mbox{a.e. in}\ O.
\end{equation}

If, moreover, $\Phi$ is strictly convex on an open convex set
$U \subset \R^M $, and
$$
\Phi(\vc{v}) = \Ov{\Phi (\vc{v})} \ \mbox{a.e. on}\ O,
$$
then

\begin{equation}\label{FNeb5}
\vc{v}_n  (\vc{y}) \to \vc{v} (\vc{y})
\ \mbox{for a.a.}\ \vc{y} \in \{
\vc{y} \in O \ | \ \vc{v}(\vc{y}) \in U \}
\end{equation}
extracting a subsequence as the case may be.
\end{lem}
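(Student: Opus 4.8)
This lemma is a classical fact of convex analysis (see e.g.\ Lemma 2.11 and Corollary 2.2 in Feireisl \cite{EF70}); here is the route I would take.

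\emph{Inequality \eqref{FNeb4}.} The plan is to use that a proper lower semicontinuous convex function $\Phi:\R^M\to(-\infty,\infty]$ is the pointwise supremum of its affine minorants and, by separability of the finite–dimensional space of affine maps, of a countable subfamily: there exist $(\vc a_k,b_k)_{k\ge 1}$ with $\ell_k(\vc z):=\vc a_k\cdot\vc z+b_k\le\Phi(\vc z)$ for all $\vc z\in\R^M$ and $\sup_k\ell_k\equiv\Phi$ on $\R^M$. Since \eqref{FNeb4} is a pointwise a.e.\ statement it suffices to argue on each ball $O\cap B(0,R)$, where all functions involved lie in $L^1$. Fix $k$. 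From $\ell_k(\vc v_n)\le\Phi(\vc v_n)$ a.e., the weak $L^1$ convergences $\vc v_n\rightharpoonup\vc v$ (hence $\ell_k(\vc v_n)\rightharpoonup\ell_k(\vc v)$, the affine part being linear and $b_k$ integrable on the ball) and $\Phi(\vc v_n)\rightharpoonup\Ov{\Phi(\vc v)}$, together with stability of a.e.\ inequalities under weak $L^1$ limits (test against indicators of measurable subsets), one gets $\ell_k(\vc v)\le\Ov{\Phi(\vc v)}$ a.e. Removing the countable union of the exceptional null sets and taking $\sup_k$ yields $\Phi(\vc v)=\sup_k\ell_k(\vc v)\le\Ov{\Phi(\vc v)}$ a.e.\ in $O$. (In particular $\Ov{\Phi(\vc v)}<\infty$ a.e.\ then forces $\vc v(\vc y)\in\overline{\mathrm{dom}\,\Phi}$ for a.a.\ $\vc y$.)

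\emph{A.a.\ convergence \eqref{FNeb5}.} Assume in addition that $\Phi$ is strictly convex on the open convex set $U$ and $\Phi(\vc v)=\Ov{\Phi(\vc v)}$ a.e. Pick a measurable selection $\vc y\mapsto\vc\xi(\vc y)\in\partial\Phi(\vc v(\vc y))$ on $\{\vc v\in U\}$ and introduce the nonnegative Bregman remainder
\[
g_n(\vc y):=\Phi(\vc v_n(\vc y))-\Phi(\vc v(\vc y))-\vc\xi(\vc y)\cdot\big(\vc v_n(\vc y)-\vc v(\vc y)\big)\ \ge\ 0 .
\]
For $m\ge 1$ set $O_m:=\{\vc y\in O\cap B(0,m):\vc v(\vc y)\in U,\ |\vc v(\vc y)|\le m,\ |\vc\xi(\vc y)|\le m\}$, which has finite measure and on which $\vc\xi\in L^\infty$. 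Testing the three weak $L^1$ limits against $\mathbf 1_{O_m}$ and against $\vc\xi\,\mathbf 1_{O_m}$ gives $\int_{O_m}g_n\to\int_{O_m}(\Ov{\Phi(\vc v)}-\Phi(\vc v))=0$, so $g_n\to 0$ in $L^1(O_m)$ and, along a subsequence, $g_n\to 0$ a.e.\ in $O_m$. It then remains to see that, at a point $\vc y$ with $\vc v(\vc y)\in U$, $g_n(\vc y)\to 0$ implies $\vc v_n(\vc y)\to\vc v(\vc y)$: the map $t\mapsto\Phi(\vc v(\vc y)+t\vc e)-\Phi(\vc v(\vc y))-t\,\vc\xi(\vc y)\cdot\vc e$ is convex, vanishes at $t=0$ and is nonnegative, hence nondecreasing for $t\ge 0$, which (using strict convexity of $\Phi$ near $\vc v(\vc y)\in U$, making the unit–displacement remainder strictly positive) rules out $|\vc v_n(\vc y)|\to\infty$; on the resulting bounded sequence any cluster point $\vc w$ has zero Bregman remainder relative to $\vc v(\vc y)\in U$, whence $\vc w=\vc v(\vc y)$ by strict convexity. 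Exhausting $O\cap\{\vc v\in U\}$ by the $O_m$'s and passing to a diagonal subsequence produces a single subsequence with $\vc v_n\to\vc v$ a.e.\ on $\{\vc v\in U\}$, i.e.\ \eqref{FNeb5}.

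\emph{Main obstacle.} Everything up to the $L^1$ estimate on $g_n$ is soft; the only delicate point is the final implication ``$g_n(\vc y)\to0\Rightarrow\vc v_n(\vc y)\to\vc v(\vc y)$'' in the absence of any coercivity or growth hypothesis on $\Phi$. It is handled exactly by the monotonicity of the Bregman remainder along rays combined with strict convexity on $U$, and this is also why the conclusion is localized to $\{\vc v\in U\}$ and holds only up to extraction of a subsequence.
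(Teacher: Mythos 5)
The paper does not prove this lemma but cites it to Feireisl \cite{EF70} (Lemma~2.11 and Corollary~2.2), so there is no in-house proof to compare against. Your argument is the standard convex-analysis route and is essentially correct: the countable family of affine minorants plus stability of a.e.\ inequalities under weak $L^1$ limits gives \eqref{FNeb4}, and the Bregman-remainder argument gives \eqref{FNeb5}.

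Two points in the second half are left implicit and deserve a line each, though neither is a genuine obstruction. First, to ``rule out $|\vc v_n(\vc y)|\to\infty$'' you need a lower bound $g_n(\vc y)\ge c>0$ whenever $|\vc v_n(\vc y)-\vc v(\vc y)|\ge 1$ that is \emph{uniform over directions}; monotonicity reduces this to $\inf_{|\vc e|=1} h_{\vc e}(1)>0$, which follows because $\vc e\mapsto h_{\vc e}(1)=\Phi(\vc v(\vc y)+\vc e)-\Phi(\vc v(\vc y))-\vc\xi(\vc y)\cdot\vc e$ is lower semicontinuous and strictly positive on the compact unit sphere, hence attains a positive minimum. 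Second, when you conclude ``$\vc w=\vc v(\vc y)$ by strict convexity'', the cluster point $\vc w$ of the bounded sequence need not lie in $U$, so strict convexity on $U$ does not apply directly; the fix is the standard propagation of the strict subgradient inequality: if $\vc v(\vc y)\in U$, $\vc\xi(\vc y)\in\partial\Phi(\vc v(\vc y))$, and $\vc w\ne\vc v(\vc y)$, pick $\vc u=(1-s)\vc v(\vc y)+s\vc w\in U\setminus\{\vc v(\vc y)\}$ with $s\in(0,1)$; then $\Phi(\vc u)>\Phi(\vc v(\vc y))+\vc\xi(\vc y)\cdot(\vc u-\vc v(\vc y))$ by strict convexity on $U$, and convexity of $\Phi$ on the segment yields $\Phi(\vc w)>\Phi(\vc v(\vc y))+\vc\xi(\vc y)\cdot(\vc w-\vc v(\vc y))$, i.e.\ the Bregman remainder is strictly positive for \emph{every} $\vc w\ne\vc v(\vc y)$. (Alternatively, run the tightness step with radius $\epsilon$ instead of $1$, for $\epsilon$ small enough that $\overline{B(\vc v(\vc y),\epsilon)}\subset U$, which avoids cluster points outside $U$ altogether.) With these two remarks added, the proof is complete.
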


\begin{lem} \label{Lemma4}
Let $O$ be a domain in $\R^d$, $P,G:O\times[0,\infty)\mapsto [0,\infty)$ be a couple of functions such that
for almost all $y\in O$,  $\varrho\mapsto P(y,\varrho)$ and $\varrho\mapsto G(y,\varrho)$
are both non decreasing and continuous on $[0,\infty)$. Assume that  $\varrho_n\in
L^1(O;[0,\infty))$ is a sequence such that
$$
\left.\begin{array}{c}
P(\cdot,\vr_n) \rightharpoonup \overline{P(\cdot,\vr)}, \\
G(\cdot,\vr_n) \rightharpoonup \overline{G(\cdot,\vr)}, \\
P(\cdot,\vr_n)G(\cdot,\vr_n) \rightharpoonup \overline{P(\cdot,\,\vr)G(\cdot\vr)}
\end{array} \right\} \mbox{ in } L^1(O).
$$
Then
$$
\overline{P(\cdot,\vr)}\, \, \overline{G(\cdot,\vr)} \leq
\overline{P(\cdot,\vr)G(\cdot,\vr)}
$$
a.e. in $O$.
\end{lem}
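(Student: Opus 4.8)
\textbf{Plan of proof of Lemma \ref{Lemma4}.} The statement is a pointwise monotonicity (``Minty-type'') inequality for weak limits, and the natural route is the classical trick of exploiting the sign of $(P(\cdot,\vr_n)-P(\cdot,\varrho))(G(\cdot,\vr_n)-G(\cdot,\varrho))$ for an arbitrary fixed value $\varrho\ge 0$ and then passing to the limit. First I would reduce to a local statement: it suffices to prove $\overline{P(\cdot,\vr)}\,\overline{G(\cdot,\vr)}\le\overline{P(\cdot,\vr)G(\cdot,\vr)}$ on an arbitrary ball $B\subset O$ of finite measure, and even to prove, for every bounded nonnegative test function $\varphi\in L^\infty(B)$,
\begin{equation*}
\int_B \varphi\,\overline{P(\cdot,\vr)}\,\overline{G(\cdot,\vr)}\,{\rm d}y\le \int_B\varphi\,\overline{P(\cdot,\vr)G(\cdot,\vr)}\,{\rm d}y,
\end{equation*}
since $\varphi$ is arbitrary nonnegative. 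The difficulty with a genuine Minty argument here is that we only have $L^1$-weak convergence, so products need not converge; the monotonicity in $\varrho$ of both $P(y,\cdot)$ and $G(y,\cdot)$ is precisely what compensates for this.

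The key computation I would carry out is the following. Fix a constant $k\ge 0$. For a.a. $y$ and every $n$, monotonicity gives
\begin{equation*}
\bigl(P(y,\vr_n(y))-P(y,k)\bigr)\bigl(G(y,\vr_n(y))-G(y,k)\bigr)\ge 0 .
\end{equation*}
Expanding and using that $P(\cdot,\vr_n)$, $G(\cdot,\vr_n)$, and $P(\cdot,\vr_n)G(\cdot,\vr_n)$ all converge weakly in $L^1(B)$ (the constants $P(\cdot,k)$, $G(\cdot,k)$ being bounded on $B$ after a further truncation argument, see below), I obtain after integrating against $\varphi\ge 0$ and letting $n\to\infty$:
\begin{equation*}
\int_B\varphi\Bigl(\overline{P(\cdot,\vr)G(\cdot,\vr)}-P(\cdot,k)\,\overline{G(\cdot,\vr)}-G(\cdot,k)\,\overline{P(\cdot,\vr)}+P(\cdot,k)G(\cdot,k)\Bigr){\rm d}y\ge 0 .
\end{equation*}
This holds for every constant $k\ge 0$; since $\varrho\mapsto P(y,\varrho)$ and $\varrho\mapsto G(y,\varrho)$ are continuous and nondecreasing, a standard measurable-selection / monotone-approximation argument lets me replace the constant $k$ by the (measurable) function $\vr(y)$ — concretely, approximate $\vr$ from below and from above by simple functions $\vr^{\pm}_m$, apply the inequality on each level set where $\vr^{\pm}_m$ is constant, sum, and pass $m\to\infty$ using continuity of $P(y,\cdot)$, $G(y,\cdot)$ together with dominated convergence. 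Substituting $k=\vr(y)$ and noting $P(\cdot,\vr)=\overline{P(\cdot,\vr)}$ is false in general — so instead, after substitution the first and last terms combine to $\overline{P(\cdot,\vr)G(\cdot,\vr)}+P(\cdot,\vr)G(\cdot,\vr)$ and the middle terms to $-P(\cdot,\vr)\overline{G(\cdot,\vr)}-G(\cdot,\vr)\overline{P(\cdot,\vr)}$; one then uses Jensen/lower semicontinuity $P(\cdot,\vr)\le\overline{P(\cdot,\vr)}$ and $G(\cdot,\vr)\le\overline{G(\cdot,\vr)}$ (valid by Lemma \ref{Lemma2} applied to the convex — here merely monotone, so one first writes $P(y,\cdot)$ as a limit of piecewise-affine convex functions — envelope) to absorb the extra terms and arrive at $\overline{P(\cdot,\vr)}\,\overline{G(\cdot,\vr)}\le\overline{P(\cdot,\vr)G(\cdot,\vr)}$ a.e., as claimed.

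The main obstacle is the low integrability: the three weak limits live only in $L^1$, so I cannot a priori multiply $\overline{P(\cdot,\vr)}$ by $\overline{G(\cdot,\vr)}$ and must keep everything inside integrals against $\varphi$ until the very end; and the constants $P(\cdot,k)$, $G(\cdot,k)$ entering the expansion must be handled by an additional truncation — replace $P$ by $P\wedge N$ and $G$ by $G\wedge N$, run the whole argument, then let $N\to\infty$ by monotone convergence, which is legitimate since $P,G\ge 0$. The substitution $k\rightsquigarrow\vr(y)$ is the other delicate point; I would make it rigorous by the level-set approximation described above rather than by any abstract selection theorem, which keeps the proof elementary. Everything else (Fubini, dominated convergence, density of simple functions) is routine. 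This lemma is exactly Corollary 2.2 in Feireisl \cite{EF70}, so in the paper I would simply cite that reference; the sketch above is the argument behind it.
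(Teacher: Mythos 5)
Your instinct to simply cite Feireisl (Lemma~2.11, Corollary~2.2) is exactly what the paper does — the lemma is stated without proof and that citation is the paper's entire ``proof.'' However, the sketch you present as ``the argument behind it'' is not the argument behind Feireisl's Corollary~2.2, and as written it contains a genuine gap.

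The problem is the choice $k=\vr(y)$ (the weak limit of $\vr_n$) followed by Jensen. Write, at a fixed $y$, $a:=P(y,\vr(y))$, $A:=\overline{P(\cdot,\vr)}(y)$, $b:=G(y,\vr(y))$, $B:=\overline{G(\cdot,\vr)}(y)$, $C:=\overline{P(\cdot,\vr)G(\cdot,\vr)}(y)$. Your expansion with $k=\vr$ gives $C\ge aB+bA-ab = AB-(A-a)(B-b)$. First, the inequalities $a\le A$ and $b\le B$ are \emph{not} consequences of monotonicity: they hold only for convex $\Phi$ (Lemma~\ref{Lemma2}), and fail — indeed reverse — for concave $\Phi$. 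In the very application in the paper, $G(\cdot,\varrho)={\cal T}_k(\varrho)$ is concave (cf.\ (\ref{Tk})), so $G(\cdot,\vr)\ge\overline{G(\cdot,\vr)}$. A monotone function cannot in general be written as a limit of convex functions (take $\sqrt{\varrho}$), so the proposed fix does not restore the inequality. Second, and more to the point, even if both Jensen inequalities did hold, $AB-(A-a)(B-b)\le AB$ whenever $a\le A$, $b\le B$, so the bound you derive is \emph{weaker} than the target $C\ge AB$ and cannot imply it (a concrete test: $P=G=\varrho^2$, $\vr_n$ alternating between $0$ and $2$ gives $a=b=1$, $A=B=2$, $C=8$; your bound is $C\ge 3$, the target is $C\ge 4$). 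So the final absorption step simply does not close.

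The correct argument (the one behind Corollary~2.2 in \cite{EF70}) needs neither convexity nor a truncation nor any selection of $k$. One integrates the genuinely pointwise monotonicity
$\bigl(P(y,\vr_n(y))-P(y,\vr_n(z))\bigr)\bigl(G(y,\vr_n(y))-G(y,\vr_n(z))\bigr)\ge 0$
(or, in the $y$-independent case, the analogous statement for $\Phi,\Psi$) over $Q\times Q$ for a small cube $Q$, expands, uses only the three weak $L^1$ convergences against $\mathbf{1}_Q$, and then shrinks $Q$ to a Lebesgue point of $\overline{P}$, $\overline{G}$, $\overline{PG}$. If you want to keep your starting identity with a test value $k$, the right choice is not $k=\vr$ but $k(y)$ such that $P(y,k(y))=\overline{P(\cdot,\vr)}(y)$ — which exists by the assumed continuity and monotonicity of $P(y,\cdot)$ up to a set of measure zero — so that the offending cross term $G(\cdot,k)\bigl(\overline{P(\cdot,\vr)}-P(\cdot,k)\bigr)$ vanishes identically and one gets $\overline{PG}\ge\overline{P}\,\overline{G}$ directly; a routine truncation in $N$ takes care of the $L^\infty$ bound on $P(\cdot,k_N)$. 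In short: the citation is fine and matches the paper, but your proposed ``argument behind it'' would need to be replaced by one of these.
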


The last lemma we wish to recall is the Friedrichs lemma on commutators, see e.g; Di-Perna, Lions \cite{DL}.

\begin{lem}[Friedrichs commutator lemma] \label{Friedrichs}
Let $I\subset R$ be an open bounded interval and $f \in L^\alpha(I;$ $L^\beta_{{\rm loc}}(\R^d))$, $\vu \in L^p(I;W^{1,q}_{{\rm loc}}(\R^d;\R^d))$. Let $1\leq q,\beta\leq \infty$, $(q,\beta)  \neq (1,\infty)$, $\frac 1q + \frac 1{\beta} \leq 1$, $1\leq \alpha \leq \infty$ and $\frac 1\alpha + \frac 1p \leq 1$.  Then
$$
{\rm div}([\vu f]_\ep) - {\rm div}(\vu [f]_\ep) \to 0
$$
strongly in $L^t(I;L^r_{{\rm loc}} (\R^d))$, where 
$$
\frac 1t \geq \frac 1\alpha + \frac 1p,\;  t\in [1,\infty)
$$
and 
$$
r\in [1,q) \text{ for } \beta = \infty, \ q \in (1,\infty],
$$ 
while $\frac 1\beta + \frac 1q \leq \frac 1r \leq 1$ otherwise. 
In the above $[f]_\ep$ denotes the mollifications of $f$ over the space
variables via the convolution of $f$ with the standard regularizing kernel.
\end{lem}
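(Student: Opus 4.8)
The first observation is that mollification over the space variables commutes with the full spatial divergence, so that $\Div([\vc{u}f]_\ep)=[\Div(\vc{u}f)]_\ep$ in the sense of distributions (and in fact as an $L^1_{\rm loc}$ function, since $\vc{u}f\in L^1_{\rm loc}$). Hence the commutator in the statement coincides with the classical DiPerna--Lions commutator $r_\ep:=[\Div(\vc{u}f)]_\ep-\Div(\vc{u}[f]_\ep)$, and the time variable plays only the role of a parameter. The strategy is therefore: (i) establish the pointwise-in-time convergence $r_\ep(t,\cdot)\to 0$ in $L^r_{\rm loc}(\R^d)$ for a.e. $t\in I$ by the classical spatial argument, and (ii) upgrade this to strong convergence in $L^t(I;L^r_{\rm loc}(\R^d))$ via a uniform-in-$\ep$ domination in $t$ and Lebesgue's dominated convergence theorem. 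All of this is in \cite{DL}; I reproduce the architecture.

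\textbf{Step 1: the explicit commutator formula and its formal limit.}
Writing $\omega_\ep(x)=\ep^{-d}\omega(x/\ep)$ for the standard mollifier, an integration by parts in $y$ gives, for a.e. $t$,
$$
r_\ep(t,x)=\int_{\R^d}(\nabla\omega_\ep)(x-y)\cdot\big(\vc{u}(t,y)-\vc{u}(t,x)\big)f(t,y)\,dy-\big(\Div\vc{u}(t,x)\big)[f]_\ep(t,x).
$$
The second term converges to $(\Div\vc{u})f$ in $L^r_{\rm loc}$ since $[f]_\ep\to f$ in $L^\beta_{\rm loc}$ (locally in space; in $L^s_{\rm loc}$ for every finite $s$ when $\beta=\infty$), and $\Div\vc{u}\in L^q_{\rm loc}$. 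For the first term one substitutes $y=x-\ep z$ to obtain $\int(\nabla\omega)(z)\cdot\tfrac{\vc{u}(t,x-\ep z)-\vc{u}(t,x)}{\ep}f(t,x-\ep z)\,dz$; formally, using $\tfrac{\vc{u}(t,x-\ep z)-\vc{u}(t,x)}{\ep}\to-(z\cdot\nabla)\vc{u}(t,x)$, $f(t,x-\ep z)\to f(t,x)$, and the elementary identity $\int_{\R^d}z_j\,\partial_i\omega(z)\,dz=-\delta_{ij}$ (integration by parts with $\int\omega=1$), the first term tends to $(\Div\vc{u})f$ as well. Hence $r_\ep(t,\cdot)\to 0$. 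To make this rigorous one first proves it for $\vc{u}(t,\cdot)$ smooth, where the difference-quotient convergence is uniform on compacts and dominated, and then passes to general $\vc{u}(t,\cdot)\in W^{1,q}_{\rm loc}$ using the uniform bound of Step 2 together with density of $C^\infty$ in $W^{1,q}_{\rm loc}$ (here one uses $q>1$, which is forced by $\tfrac1q+\tfrac1\beta\le1$ and the exclusion of $(q,\beta)=(1,\infty)$).

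\textbf{Step 2: uniform domination and the time integration.}
The key estimate is the Friedrichs-type bound: for any pair of compacts $K\subset K'$ with $K'\supset K+B(0,1)$ and all $\ep$ small,
$$
\big\|r_\ep(t,\cdot)\big\|_{L^r(K)}\le C_{K,K'}\,\big\|\vc{u}(t,\cdot)\big\|_{W^{1,q}(K')}\,\big\|f(t,\cdot)\big\|_{L^\beta(K')},
$$
uniform in $\ep$. This follows from Minkowski's and Hölder's inequalities applied to the substituted formula above, combined with the elementary inequality $\|\vc{u}(t,\cdot-\ep z)-\vc{u}(t,\cdot)\|_{L^q(K)}\le C\ep|z|\,\|\nabla\vc{u}(t,\cdot)\|_{L^q(K')}$ valid for $W^{1,q}$ functions (with the obvious $L^\infty$ variant when $q=\infty$), the factor $\ep|z|$ cancelling the $\ep^{-1}$ from the difference quotient; the product of the $L^q$ and $L^\beta$ factors lands in $L^r$ precisely for $r$ in the stated range (and strictly below $q$ in the borderline case $\beta=\infty$, since $[f]_\ep$ then converges only in $L^s_{\rm loc}$, $s<\infty$). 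The right-hand side, as a function of $t$, lies in $L^{\sigma}(I)$ with $\tfrac1\sigma=\tfrac1\alpha+\tfrac1p$ by Hölder in time, hence in $L^t(I)$ for the admissible $t\ge\sigma$ since $I$ is bounded. Combining the a.e.-in-$t$ convergence of Step 1 with this $t$-independent dominating function, dominated convergence yields $r_\ep\to0$ strongly in $L^t(I;L^r(K))$, and since $K$ was arbitrary this is the assertion.

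\textbf{Main obstacle.}
The analytically substantive point is Step 1 for non-smooth $\vc{u}$: one must pass from the smooth case to $W^{1,q}_{\rm loc}$ without any control better than the uniform bound of Step 2, which forces a careful interplay of the density argument, the local nature of the statement (tracking how the $\ep$-neighbourhood of a compact shrinks into a fixed compact), and the borderline exponents — in particular the case $\beta=\infty$, where the loss of $L^\infty$-convergence of $[f]_\ep$ is exactly what dictates the restriction $r<q$, and the exclusion of $(q,\beta)=(1,\infty)$, which would otherwise force the impossible $r<1$. Everything else is routine Hölder bookkeeping.
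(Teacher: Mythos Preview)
The paper does not prove this lemma at all; it merely states it in the Appendix and cites DiPerna--Lions \cite{DL} for the proof. Your proposal reproduces precisely the standard DiPerna--Lions architecture (explicit commutator formula, uniform Friedrichs-type bound, density in $W^{1,q}_{\rm loc}$, dominated convergence in time), so it is correct and is exactly what the paper defers to.
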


\end{document}